\newtheorem{theorem}{Theorem}[section]
\newtheorem{lemma}[theorem]{Lemma}
\newtheorem{prop}[theorem]{Proposition}
\theoremstyle{definition}
\newtheorem{example}[theorem]{Example}
\newtheorem{remark}[theorem]{Remark}
\numberwithin{equation}{section}
\newcommand{\abs}[1]{\lvert#1\rvert}
\def\n{{\mathbb{N}}}
\def\laa{{\langle}}
\def\raa{{\rangle}}
\DeclareMathOperator*{\Res}{\mathrm{Res}}
\DeclareMathOperator*{\Lim}{\mathcal{L}\mathit{im}}
\begin{document}

\title{Hyperbolic-sine analogues of Eisenstein series, generalized Hurwitz numbers, and $q$-zeta functions}

\author{Yasushi Komori\footnote{Department of Mathematics, Rikkyo University, Nishi-Ikebukuro, Toshima-ku, Tokyo 171-8501, Japan. Email: komori@rikkyo.ac.jp}, 
Kohji Matsumoto\footnote{Graduate School of Mathematics, Nagoya University, 
Chikusa-ku, Nagoya 464-8602, Japan. Email: kohjimat@math.nagoya-u.ac.jp}
and Hirofumi Tsumura\footnote{Department of Mathematics and Information Sciences, Tokyo Metropolitan University, 1-1, Minami-Ohsawa, Hachioji, Tokyo 192-0397 
Japan. Email: tsumura@tmu.ac.jp }}

\date{}

\maketitle

\begin{abstract}
  We consider certain double series of Eisenstein type 
  involving hyperbolic-sine functions. 
  We define certain generalized Hurwitz numbers, in terms of which we evaluate 
  those double series.  Our main results can be regarded as a certain
  generalization of well-known results of Hurwitz, Herglotz, Katayama and so on.
  Our results also include recent formulas of the third-named author which are
  double analogues of the formulas of Cauchy, Mellin, Ramanujan, Berndt and
  so on, about certain Dirichlet series involving hyperbolic 
  functions. As an application, we give some evaluation formulas for $q$-zeta 
  functions at positive integers.\\
\\
{\bf Keywords}:\ Eisenstein series, Hurwitz numbers, Lemniscate constant, Hyperbolic functions, $q$-zeta functions\\ {\bf 2010 Mathematics Subject Classification}:\ {Primary 11M41, Secondary 11M99}
\end{abstract}

\baselineskip 16pt

\section{Introduction}\label{sec-1}

Let $\mathbb{N}$ be the set of natural numbers, 
$\mathbb{N}_0=\mathbb{N}\cup\{0\}$,
$\mathbb{Z}$ the ring
of rational integers, $\mathbb{Q}$ the field of rational numbers,
$\mathbb{R}$ the field of real numbers, and $\mathbb{C}$ the field of
complex numbers.

We begin with a fascinating result
of Hurwitz \cite{Hur99} (see also \cite{Hu}), which is
\begin{equation}
  \sum_{\substack{m,n \in \mathbb{Z} \\ (m,n)\neq(0,0)}}\frac{1}{(m+ni)^{k}} =\frac{(2\varpi)^{k}}{k!}H_{k}
\label{1-1}
\end{equation}
for $k\in\mathbb{N}$ with $k\geq 3$,
where $i=\sqrt{-1}$, $\varpi$ is called the lemniscate constant defined by 
$$\varpi=2\int_{0}^{1}\frac{dx}{\sqrt{1-x^4}}=\frac{\Gamma(1/4)^2}{2\sqrt{2\pi}} =2.622057\cdots,$$
and $\{ H_{m}\,|\,m \in \mathbb{N}\}$ are often called Hurwitz
numbers defined as coefficients of the Laurent series expansion of the
Weierstrass $\wp$-function, namely,
\begin{equation}
  \label{1-2}
  \begin{split}
    \wp(z) & =\frac{1}{z^2}+\sum_{\substack{\lambda \in \mathbb{Z}\varpi + \mathbb{Z}\varpi i \\ \lambda \neq 0}} \left( \frac{1}{(z-\lambda)^2}-\frac{1}{\lambda^2}\right) \\
    & =\frac{1}{z^2}+\sum_{m=1}^\infty \frac{2^{m}H_{m}}{m}\frac{z^{m-2}}{(m-2)!} 
  \end{split}
\end{equation}
(see also, for example, \cite{Ay,Ri}). Note that $H_m=0$ if $m
\not\equiv 0$ (mod $4$), because $\wp(-z)=\wp(z)$ and 
$\wp(iz)=-\wp(z)$, while $H_4=1/10$, $H_8=3/10$, etc.
Formula \eqref{1-1} can be written in terms of
Eisenstein series $G_{2k}(\tau)$ defined by
\begin{equation}
  G_{2k}(\tau)=\sum_{\substack{m,n \in \mathbb{Z} \\ (m,n)\neq(0,0)}}\frac{1}{(m+n\tau)^{2k}} \qquad (k \in \mathbb{N};\,k\geq 2) \label{1-3}
\end{equation}
for $\tau \in \mathbb{C}$ with $\Im \tau>0$ (see, for example, Koblitz
\cite{Ko}, Serre \cite{Se}).  In fact \eqref{1-1} gives, for example,
\begin{equation}
  G_4(i)=\frac{1}{15}\varpi^4,\ G_8(i)=\frac{1}{525}\varpi^8,\ G_{12}(i)=\frac{2}{53625}\varpi^{12},\ldots \label{1-4} 
\end{equation}
Generalizations of these results were given subsequently by Dintzl (to
$\mathbb{Q}(\sqrt{-2})$), Matter (to $\mathbb{Q}(\sqrt{-3})$),
Naryskina (to imaginary quadratic number fields with class number
$1$), Dintzl and Herglotz (to general imaginary quadratic number
fields). For the details of these results, see Lemmermeyer
\cite[Chapter 8]{Lem}.

On the other hand, let
\begin{equation}
 \label{1-6}
\mathfrak{E}(\xi;x,y;\omega_1,\omega_2)
  =\frac{e^{2\pi i x\xi/\omega_1}}{\omega_1}\frac{\theta'(0;\tau)\theta(\xi/\omega_1+x\tau-y;\tau)}{\theta(\xi/\omega_1;\tau)\theta(x\tau-y;\tau)}
\end{equation}
for $\xi \in \mathbb{C}$, $x,y \in \mathbb{R}$ with $0<x<1$ and 
$\omega_1,\omega_2\in \mathbb{C}$, $\omega_2/\omega_1=\tau \in \mathbb{C}$ with $\Im \tau>0$, where
$\theta(z)$ is the Jacobi theta function defined by 
\begin{equation}
\label{1-7}
  \theta(z)=
  \theta(z;\tau)=
  -i\sum_{n \in \mathbb{Z} }
  \exp
  \left(
  \pi i \left(n+\frac 12\right)^2 \tau 
  +
  2 \pi i \left( n + \frac 12\right) z
  + \pi i n
  \right),
\end{equation}
with
\begin{equation}
  \theta'(0)=2\pi e^{\pi i \tau/4}\prod_{n=1}^\infty(1-e^{2\pi i n\tau})^3=2\pi\eta(\tau)^3.
\end{equation}
Then Kronecker proved
\begin{equation}                                                                       
 \label{1-5}                                                                           
   \mathfrak{E}(\xi;x,y;\omega_1,\omega_2)=\lim_{N\to\infty}\sum_{n=-N}^N 
\lim_{M\to\infty}\sum_{m=-M}^M \frac{e^{-2\pi i(mx+ny)}}{\xi+m\omega_1+n\omega_2}    
\end{equation}
(see, for example, Weil \cite[Chapter 8]{Weil}). 

In \cite{Katayama}, 
motivated by the expression \eqref{1-5},
Katayama considered the Laurent expansion 
\begin{align}
 \label{1-9}
\mathfrak{E}(\xi;x,y;\omega_1,\omega_2)=
\frac{1}{\xi}+\sum_{j=0}^{\infty}
     \frac{{\cal H}_{j+1}(x,y;\omega_1,\omega_2)}{(j+1)!}\xi^j
\end{align}
(\cite[(2.8)]{Katayama}, where he wrote $S_{j+1}(\omega_1,\omega_2;x,y)$ instead of
${\cal H}_{j+1}(x,y;\omega_1,\omega_2)$) 
and proved that 
\begin{align}
 \label{1-10}
   {\cal H}_{j+1}(x,y;\omega_1,\omega_2)=(-1)^j (j+1)!\lim_{M\to\infty}\sum_{m=-M}^M \lim_{N\to\infty}\sum_{\substack{n=-N \\ (m,n)\neq(0,0)}}^N \frac{e^{-2\pi i(mx+ny)}}{(m\omega_1+n\omega_2)^{j+1}}
\end{align}
for $j\in \mathbb{N}$ (see \cite[Theorem 3]{Katayama}), which can be regarded as a 
generalization of \eqref{1-1}. Note that these results in the case $(x,y)=(0,0)$ 
have already been studied by Herglotz \cite{Her} as mentioned above. In fact, 
Herglotz noticed that ${\cal H}_{j+1}(0,0;\omega_1,\omega_2)$ is a kind of generalization
of Hurwitz numbers, because ${\cal H}_{4k}(0,0;1,i)=-(2\varpi)^{4k} H_{4k}$.

As another analogue of \eqref{1-1}, the third-named author \cite{TsBul} recently gave some formulas involving $\sinh x=(e^x-e^{-x})/2$, for example, 
\begin{align}
& \sum_{m\in\mathbb{Z}\setminus\{0\}}\sum_{n\in\mathbb{Z}}\frac{(-1)^n}{\sinh(m\pi){(m+ni)}^{3}} = \frac{\varpi^4}{15\pi} - \frac{7}{90}\pi^3 + \frac{1}{6}\pi^2, \label{1-11}\\
& \sum_{m\in\mathbb{Z}\setminus\{0\}}\sum_{n\in\mathbb{Z}}\frac{(-1)^n}{\sinh(m\pi){(m+ni)}^{5}} = -\frac{1}{90}\varpi^4 \pi + \frac{31}{2520}\pi^5 - \frac{7}{720}\pi^4.\label{1-11-2}
\end{align}
These can be regarded as double analogues of the following classical result given by Cauchy, Mellin, Ramanujan and so on (see \cite{Be2,Be3,Ca}):
\begin{align}
\label{1-14}
\sum_{m \in \mathbb{Z}\setminus \{0\}} \frac{(-1)^{m}}{\sinh(m\pi)m^{4k+3}}={(2\pi)^{4k+3}}\sum_{j=0}^{2k+2}(-1)^{j+1} \frac{B_{2j}(1/2)}{(2j)!}\frac{B_{4k+4-2j}(1/2)}{(4k+4-2j)!}
\end{align}
for $k\in \mathbb{N}_{0}$, where $\{ B_m(x)\}$ are Bernoulli polynomials defined by
\begin{equation}
\frac{te^{xt}}{e^t-1}=\sum_{m=0}^\infty B_m(x)\frac{t^m}{m!}. \label{1-13}
\end{equation}
Berndt \cite{BeR,Be1} recovered \eqref{1-14} and derived a number of relevant new 
formulas from a general theorem on a kind of generalized Eisenstein series in 
\cite{Be0}; yet another viewpoint is given in \cite{KMT-CRB}. 

Recently, the authors gave certain functional relations between double zeta-functions of Eisenstein type and the double series defined by
$$\sum_{m \in \mathbb{Z}\setminus \{0\}}\sum_{n \in \mathbb{Z}} \frac{(-1)^{n}}{\sinh(m\pi)m^{s_1}(m+ni)^{s_2}}.$$
By considering special values of these functional relations, we can obtain 
\eqref{1-11}, \eqref{1-11-2}, and so on (see \cite{KMT-DB}). 

It is the aim of the present paper to generalize the formulas \eqref{1-10},
\eqref{1-11}, and \eqref{1-11-2}.   After preparing some notations in Section 
\ref{sec-2}, we will state one of our main results (Theorem \ref{Main-theorem})
in Section \ref{sec-3}.   Theorem \ref{Main-theorem} asserts that Laurent expansion
coefficients ${\mathcal K}_{k,r}(x,y,z;\omega_1,\omega_2)$
of a certain function $\frak{K}_r(\xi;x,y,z;\omega_1,\omega_2)$ 
can be expressed as a double limit, involving the hyperbolic-sine function,
similar to the right-hand side of \eqref{1-10}.
The proof of Theorem \ref{Main-theorem} will be given in Section \ref{sec-3-2}.

Define Bernoulli polynomials of higher order $\{ B^{\laa r \raa}_m(z)\}$ by
\begin{equation}                                                             
\label{Ber-high}  
  \Bigl(\frac{te^{tx}}{e^t-1}\Bigr)^r=
\sum_{m=0}^\infty{B^{\laa r \raa}_m(x)}\frac{t^m}{m
!}\quad \ (r\in \mathbb{N};\,|t|<2\pi). 
\end{equation}
Note that $\{ B^{\laa r \raa}_m(x/r)\}$ $(r\in \mathbb{N})$ coincide with 
the original definition $B_m^{(r)}(x)$ of 
Bernoulli polynomials of higher order given by N{\"o}rlund \cite{Nor0} 
(p.185 in \cite{Nor0}; see also \cite{Nor}).

In Section \ref{sec-3-3} we will show that 
${\mathcal K}_{k,r}(x,y,z;\omega_1,\omega_2)$
can be written in terms of $\{ B^{\laa r \raa}_m(z)\}$ and Hurwitz functions
$\{{\mathcal H}_k(x,y;\omega_1,\omega_2)\}$
(defined in Section \ref{sec-2}).
The general statement is Theorem \ref{Main-theorem-2}, and combining it with
Theorem \ref{Main-theorem}, we can show a generalization of \eqref{1-11} and
\eqref{1-11-2}, involving the parameters $(\omega_1,\omega_2)$. 
Here we state the following typical special case corresponding to
$(\omega_1,\omega_2)=(1,i)$.
We use the notation $[x]$ (resp.$\{x\}$) which denotes the integer part
(resp. fractional part) of $x\in\mathbb{R}$.
The empty sum is to be understood as zero.

\begin{theorem} 
\label{T-1-1}
Let $r\in\mathbb{N}$. 
Assume that $k\geq3$ and $0\leq z\leq 1$, or that $k=2$ and $0<z<1$,
or that $k=1$, $0<z<1$ and $rz\not\in\mathbb{Z}$.
Then in the case $r\geq 2$ we have
\begin{equation}
    \label{eq:formula_0}
\begin{split}
&    \pi^r \sum_{\substack{(m,n)\in\mathbb{Z}^2\\m\neq 0}}
    \frac{(-1)^{rn}}{(\sinh(m\pi))^r}\frac{ e^{2\pi (m+in)r(z-1/2)}}{ (m+ni)^{k}}\\
   &=\sum_{l=r+1}^{k+r}
      \frac{(2\varpi)^lH_l}{l!}
      \frac{(2\pi)^{k+r-l}B^{\laa r \raa}_{k+r-l}(z)}{(k+r-l)!} \\
    &  +
      \sum_{l=4}^{r}
      \frac{(2\varpi)^lH_l}{l!}(2\pi)^{k+r-l}\Bigl(
      \frac{B^{\laa r \raa}_{k+r-l}(z)}{(k+r-l)!}
      -
      \sum_{j=0}^{r-l}{(-1)^j}\binom{k+j-1}{j}
      \frac{B^{\laa r \raa}_{r-j-l}(z)}{(r-j-l)!}
      \frac{B_{k+j}(\{rz\})}{(k+j)!}
      \Bigr)
      \\
      &\ -
      \frac{(2\pi)^{k+r-1}}{2}
      \Bigl(
      \frac{B^{\laa r \raa}_{k+r-2}(z)}{(k+r-2)!}
      -
      \sum_{j=0}^{r-2}{(-1)^j}\binom{k+j-1}{j}
      \frac{B^{\laa r \raa}_{r-j-2}(z)}{(r-j-2)!}
      \frac{B_{k+j}(\{rz\})}{(k+j)!}
      \Bigr)
      \\
      &\ -
      (2\pi)^{k+r}
      \Bigl(
      \frac{B^{\laa r \raa}_{k+r}(z)}{(k+r)!}
      -\sum_{j=0}^{r}{(-1)^j}\binom{k+j-1}{j}
      \frac{B^{\laa r \raa}_{r-j}(z)}{(r-j)!}
      \frac{B_{k+j}(\{rz\})}{(k+j)!}
      \Bigr)\in\mathbb{Q}[\pi,\varpi^4,z],
    \end{split}
\end{equation}
and moreover the right-hand side is of degree at most $(k+r)$, $[(k+r)/4]$ and $(k-1)$
with respect to $\pi$, $\varpi^4$ and $z$ respectively.
In the case $r=1$, we have
\begin{equation}
    \label{eq:formula_1}
\begin{split}
    \pi &  \sum_{\substack{(m,n)\in\mathbb{Z}^2\\m\neq 0}}
    \frac{(-1)^n}{\sinh(m\pi)}
    \frac{e^{2\pi (m+in)(z-1/2)}}{(m+ni)^{k}}\\
    =& 
    \sum_{l=4}^{k+1}
    \frac{(2\varpi)^lH_l}{l!}
    \frac{(2\pi)^{k+1-l} B_{k+1-l}(z)}{(k+1-l)!}-
    \frac{(2\pi)^{k}B_{k-1}(z)}{2(k-1)!}\\
    & 
    +
    \frac{(2\pi)^{k+1}}{k!}
    (B_1(z)B_k(z)-
    B_{k+1}(z))\in\mathbb{Q}[\pi,\varpi^4,z],
\end{split}
\end{equation}
and the right-hand side is of degree $(k+1)$, $[(k+1)/4]$ and $(k-1)$
with respect to $\pi$, $\varpi^4$ and $z$ respectively.
Note that, for $k=1$, the meaning of the summation on the left-hand sides of
\eqref{eq:formula_0}, \eqref{eq:formula_1} is to be understood as
$\Lim_{M,N}\sum_{\substack{-M\leq m\leq M\\-N\leq n\leq N\\m\neq 0}}$,
with the notation $\Lim_{M,N}$ to be defined at the beginning of Section
\ref{sec-3}.
\end{theorem}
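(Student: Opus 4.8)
The plan is to deduce Theorem~\ref{T-1-1} as the $(\omega_1,\omega_2)=(1,i)$ specialization of the combination of Theorem~\ref{Main-theorem} and Theorem~\ref{Main-theorem-2}. First I would recall from Section~\ref{sec-3} the function $\mathfrak{K}_r(\xi;x,y,z;\omega_1,\omega_2)$ and its Laurent coefficients ${\mathcal K}_{k,r}(x,y,z;\omega_1,\omega_2)$, and substitute the special parameters: $\omega_1=1$, $\omega_2=i$ (so $\tau=i$), together with the values of $(x,y)$ that produce the characters $(-1)^{rn}$ (equivalently the twist $e^{-2\pi i(mx+ny)}$ with the appropriate half-integral choice) and the exponential factor $e^{2\pi(m+in)r(z-1/2)}$ on the left-hand side. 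Theorem~\ref{Main-theorem} then rewrites ${\mathcal K}_{k,r}$ as precisely the double limit appearing on the left of \eqref{eq:formula_0}/\eqref{eq:formula_1}, multiplied by the factor $\pi^r$ coming from the $(\sinh(m\pi))^r$ normalization; this disposes of the left-hand side once and for all, including the delicate cases $k=1,2$ where the order of summation $\Lim_{M,N}$ must be used and where the hypotheses $0<z<1$, $rz\notin\mathbb{Z}$ guarantee convergence and the vanishing of the relevant boundary term.

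Next I would invoke Theorem~\ref{Main-theorem-2}, which expresses the same ${\mathcal K}_{k,r}$ in closed form in terms of the higher-order Bernoulli polynomials $B^{\langle r\rangle}_m(z)$ and the Hurwitz functions ${\mathcal H}_k(x,y;\omega_1,\omega_2)$. Specializing $(\omega_1,\omega_2)=(1,i)$ and the same $(x,y)$, the Hurwitz functions ${\mathcal H}_l(x,y;1,i)$ must be identified with the classical Hurwitz numbers; for the relevant arguments one uses ${\mathcal H}_l(0,0;1,i)$ and Herglotz's relation ${\mathcal H}_{4k}(0,0;1,i)=-(2\varpi)^{4k}H_{4k}$, together with the vanishing ${\mathcal H}_l=0$ when $l\not\equiv 0\pmod 4$ (mirroring $H_l=0$), which collapses the sums $\sum_{l}$ to the ranges written, with the constant $2\varpi$ appearing through $(2\varpi)^l$. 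The residual lower-order terms — the $(2\pi)^{k+r-1}/2$ term and the $(2\pi)^{k+r}$ term — come out of the $l=2$ and $l=0$ contributions of the general formula in Theorem~\ref{Main-theorem-2}, where ${\mathcal H}_2$ and the $\xi^{-1}$-part contribute the explicit Bernoulli combinations; for $r=1$ the higher-order Bernoulli polynomials reduce to ordinary ones and $B^{\langle 1\rangle}_{r-j}(z)$ with $r-j\in\{0,1\}$ produces the terms $B_1(z)B_k(z)-B_{k+1}(z)$ in \eqref{eq:formula_1}.

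Finally I would verify the rationality and degree assertions. Membership in $\mathbb{Q}[\pi,\varpi^4,z]$ is immediate once all the ingredients are so: $H_l\in\mathbb{Q}$ and $(2\varpi)^l$ contributes $\varpi^4$ only through $l\equiv 0\pmod4$ hence $(\varpi^4)^{l/4}$; the Bernoulli polynomials $B^{\langle r\rangle}_m(z)$, $B_m(\{rz\})$ lie in $\mathbb{Q}[z]$ on the relevant range (using $\{rz\}=rz-[rz]$ with $[rz]$ constant); and every remaining factor is a rational power of $2\pi$. The degree bounds follow by bookkeeping: the total power of $\pi$ in each summand is $r$ (the prefactor) plus $(k+r-l)$ or $(k+r)$ etc., but the $\varpi$-term $(2\varpi)^l$ eats $l$ of those, leaving at most $k+r$ in $\pi$ and $\lfloor (k+r)/4\rfloor$ in $\varpi^4$; the $z$-degree is governed by $\deg_z B^{\langle r\rangle}_{k+r-l}=k+r-l\le k-1$ (since $l\ge r+1$) and similarly for the cross terms, giving the stated $(k-1)$.

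The main obstacle I expect is the careful matching, term by term, of the general closed form in Theorem~\ref{Main-theorem-2} with the explicitly written right-hand side here — in particular correctly tracking the binomial coefficients $\binom{k+j-1}{j}$, the index shifts in $B^{\langle r\rangle}_{r-j-l}(z)$, and the separation of the $l\in\{0,2\}$ "correction" terms from the main sum $\sum_{l=r+1}^{k+r}$; this is a purely algebraic but error-prone reconciliation, compounded by the need to check the boundary cases $k=1,2$ where the convergence hypotheses on $z$ and $rz$ are exactly what make the identity hold.
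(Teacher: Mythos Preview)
Your overall strategy matches the paper's: specialize Theorems~\ref{Main-theorem} and~\ref{Main-theorem-2} at $(\omega_1,\omega_2)=(1,i)$, then identify $\mathcal{H}_l(1,i)$ with $-(2\varpi)^l H_l$ (for $l\geq3$) and $\mathcal{H}_2(1,i)=2\pi$ via Lemma~\ref{lem.2.3}, together with $\mathcal{B}^{\langle r\rangle}_j(z;i)=(2\pi)^j B^{\langle r\rangle}_j(z)$ from \eqref{2-33}. Two points need correcting.

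First, the specialization is $(x,y)=(0,0)$, not a half-integral twist. The factor $(-1)^{rn}$ and the exponential $e^{2\pi(m+in)r(z-1/2)}$ are already built into the definition \eqref{Gene-Hur} of $\mathcal{G}^{\langle r\rangle}_k$; setting $(x,y)=(0,0)$ and $\tau=i$ there gives exactly the left-hand side of \eqref{eq:formula_0}. This matters because you must then invoke the $(x,y)=(0,0)$ case \eqref{K_kq-2} of Theorem~\ref{Main-theorem-2}, in which the term $l=1$ is omitted (since $\mathcal{H}_1(0,0;\omega_1,\omega_2)=0$) and the sum over $l\leq r$ starts effectively at $l=0,2,3,\ldots$; the $l=3$ term vanishes since $H_3=0$, so only $l=0,2$ and $l\geq4$ survive, exactly as written in \eqref{eq:formula_0}.

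Second, your $z$-degree argument does not work as stated: for the cross terms with $l\leq r$, the individual summands $B^{\langle r\rangle}_{k+r-l}(z)$ and $B^{\langle r\rangle}_{r-j-l}(z)B_{k+j}(\{rz\})$ each have $z$-degree $k+r-l\geq k$, so ``similarly'' hides a genuine cancellation that is not obvious by bookkeeping. The paper establishes the degree bound separately via Theorem~\ref{thm:diff_eq}, which shows from the structure $\mathfrak{K}_r(\xi)=e^{2\pi i\xi rz/\omega_2}\widetilde K_r(\xi)$ that $\mathcal{K}_{k,r}$ satisfies $\partial_z\mathcal{K}_{k,r}=(2\pi i r/\omega_2)\,k\,\mathcal{K}_{k-1,r}$ and hence is a polynomial in $z$ of degree at most $k-1$ on each interval where $[y+rz]$ is constant. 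You should invoke that result (or prove the cancellation directly) rather than appeal to the shape of the individual terms.
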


For example, putting $(r,z)=(1,1/2)$ and $k=3,5$ in \eqref{eq:formula_1}, we can obtain \eqref{1-11} and \eqref{1-11-2}. Similarly \eqref{eq:formula_0} with $z=1/2$ gives explicit formulas in the case when $r\geq 2$, for example, 
$$\sum_{\substack{(m,n)\in\mathbb{Z}^2\\m\neq 0}}
\frac{1}{(\sinh(m\pi))^2 (m+ni)^2}=\frac{\varpi^4}{15\pi^2}-\frac{11}{45}\pi^2+\frac{2}{3}\pi,$$
(see Example \ref{Exam-fin}). We further give another example in the case $(\omega_1,\omega_2)=(1,\rho)$ with $\rho=e^{2\pi i/3}$ (see Example \ref{Exam-rho}). Also we recover our previous results given in \cite{TsAust} (see Example \ref{Exam-Aust}).

We prove the above theorem, Theorems \ref{Main-theorem} and \ref{Main-theorem-2} by 
considering generating functions of generalized Hurwitz numbers and applying residue 
calculus to them. Hence our method of the proof is totally different from that in 
\cite{TsBul,TsAust}. It is to be noted that a large amount of delicate arguments 
about convergence of double series are included in the course of the proof.

As an application, we consider $q$-analogues of zeta-functions (abbreviated as 
$q$-zeta functions). In the 1950's, Carlitz defined and studied $q$-Bernoulli 
numbers (see \cite{Carl}). Inspired by his study, Koblitz proposed $q$-analogues 
of zeta-functions which interpolate $q$-Bernoulli numbers. 
After their work, a lot of authors investigated various $q$-zeta functions. 
Recently Kaneko, Kurokawa and Wakayama \cite{KKW} defined a certain $q$-zeta 
function $\zeta_q(s)$ and investigated its properties.  In particular
they showed that $\lim_{q\to 1}\zeta_q(s)=\zeta(s)$ for all $s \in \mathbb{C}$ 
with $s\not=1$, where $\zeta(s)$ is the Riemann zeta-function. Furthermore 
Wakayama and Yamasaki \cite{WY} generalized $\zeta_q(s)$. Here we deduce some 
formulas for the values at positive integers of these functions from 
Theorem \ref{T-1-1} (see Proposition \ref{P-5-1}). For example, 
for $q=e^{-2\pi}<1$, we prove 
\begin{equation}
\zeta_q(2)=(1-q)^2\sum_{m=1}^\infty \frac{q^m}{(1-q^m)^2}=\frac{\left( 1-e^{-2\pi}\right)^2}{8}\left(\frac{1}{3}-\frac{1}{\pi}\right)  \label{q-val-1}
\end{equation}
(see Example \ref{Exam-5-2}).

A part of the results in the present paper has been announced in \cite{KMT-RIMS}. 

\bigskip

\section{Preliminaries} \label{sec-2}

In this section, we prepare some notations and lemmas which are necessary in this
paper.
Let $\omega_1,\omega_2\in\mathbb{C}\setminus\{0\}$ such that $\Im
(\omega_2/\omega_1)>0$.  Put $\tau=\omega_2/\omega_1$.  For
$M,N\in\mathbb{N}$, let $C_{M,N}$ be the boundary of the parallelogram
whose vertices consist of $\pm(M+1/2)\omega_1\pm(N+1/2)\omega_2$.

\subsection{Generating functions of certain Eisenstein series}
In this subsection, we review Katayama's work \cite{Katayama} with additional 
remarks.    Note that Katayama's work is based on
the classical works of Kronecker, Hurwitz and Herglotz (see
\cite{Her,Hu,Weil}).

For $-1<x,y<1$ with $(x,y)\neq(0,0)$,
let $\mathfrak{E}(\xi)=\mathfrak{E}(\xi;x,y;\omega_1,\omega_2)$ be as \eqref{1-6}.
It is easy to see that $\theta(z)$, defined by \eqref{1-7}, is an odd entire function.
In particular $\theta(0)=0$.   Also it satisfies
$$
\theta(z+1)=-\theta(z),\quad
\theta(z+\tau)=-\exp(-\pi i\tau-2\pi i z)\theta(z),
$$
which implies that $\theta(z)$ has zeros of order 1 at any lattice points on
$\mathbb{Z}+\tau\mathbb{Z}$.    It is known that there is no other zero of $\theta(z)$.
From these information we find that $\mathfrak{E}$
is a meromorphic function on $\mathbb{C}$ and 
has the quasi-periodicity
\begin{align}\label{2-1}
  \mathfrak{E}(\xi+\omega_1;x,y;\omega_1,\omega_2)&=
\mathfrak{E}(\xi;x,y;\omega_1,\omega_2)e^{2\pi ix},\\
\label{2-2}
  \mathfrak{E}(\xi+\omega_2;x,y;\omega_1,\omega_2)&=
\mathfrak{E}(\xi;x,y;\omega_1,\omega_2)e^{2\pi iy}.
\end{align}
It is also seen that
$\mathfrak{E}$ has simple poles only on $\omega_1\mathbb{Z}+\omega_2\mathbb{Z}$
and its residue at the origin is $1$.
Hence
$\mathfrak{E}$ is bounded on $\bigcup_{M,N\in\mathbb{N}}C_{M,N}$.

Since $\theta(0)=0$, the definition \eqref{1-6} is not valid for $(x,y)=(0,0)$.
In this case we define
\begin{equation}
  \mathfrak{E}(\xi)=\mathfrak{E}(\xi;0,0;\omega_1,\omega_2)
  =
  \dfrac{1}{\omega_1}
  \dfrac{\theta'(\xi/\omega_1;\tau)}
  {\theta(\xi/\omega_1;\tau)}
  +
  \frac{2\pi i\xi}{\omega_1\omega_2}.
\end{equation}
Then $\mathfrak{E}$
is a meromorphic function on $\mathbb{C}$ and 
has the (quasi-)periodicity
\begin{align}
\label{eq:pE00-1}
  \mathfrak{E}(\xi+\omega_1;0,0;\omega_1,\omega_2)&=
\mathfrak{E}(\xi;0,0;\omega_1,\omega_2)+\frac{2\pi i}{\omega_2},\\
\label{eq:pE00-2}
  \mathfrak{E}(\xi+\omega_2;0,0;\omega_1,\omega_2)&=
 \mathfrak{E}(\xi;0,0;\omega_1,\omega_2).
\end{align}
This $\mathfrak{E}$ also has simple poles only on 
$\omega_1\mathbb{Z}+\omega_2\mathbb{Z}$
and its residue at the origin is $1$.

For $-1<x,y<1$,
write the Laurent expansion of $\mathfrak{E}$ at $\xi=0$ by
\begin{equation}
 \label{def-H_k-omega}
  \mathfrak{E}(\xi;x,y;\omega_1,\omega_2)=\sum_{k=0}^\infty\frac{\mathcal{H}_k(x,y;\omega_1,\omega_2)}{k!}\xi^{k-1}.
\end{equation}
The first three coefficients in \eqref{def-H_k-omega} are given by
\begin{align}
  \label{eq:H0}
  &\mathcal{H}_0(x,y;\omega_1,\omega_2)=1,\\
  \label{eq:H1}
  &\mathcal{H}_1(x,y;\omega_1,\omega_2)=
  \begin{cases}
    \dfrac{1}{\omega_1}
    \Bigl(2\pi i x+\dfrac{\theta'( x\tau-y;\tau)}{\theta(x\tau-y;\tau)}\Bigr),\qquad &(x,y)\neq (0,0),\\
    0\qquad&(x,y)=(0,0),
  \end{cases}
\end{align}
and
\begin{align}
  \label{eq:H2}
  &\mathcal{H}_2(x,y;\omega_1,\omega_2)=
  \\
  \notag
  &
  \begin{cases}
    \dfrac{1}{\omega_1^2}
    \Bigl((2\pi i x)^2+4\pi i x
    \dfrac{\theta'( x\tau-y;\tau)}{\theta(x\tau-y;\tau)}
    +
    \dfrac{\theta''( x\tau-y;\tau)}{\theta(x\tau-y;\tau)}
    -
    \dfrac{\theta'''(0;\tau)}{3\theta'(0;\tau)}
    \Bigr),\quad&(x,y)\neq(0,0),
    \\
    \dfrac{1}{\omega_1^2}\Bigl(\dfrac{4\pi i\omega_1}{\omega_2}+\dfrac{2\theta'''(0;\tau)}{3\theta'(0;\tau)}\Bigr)
    ,\quad&(x,y)=(0,0).
  \end{cases}
\end{align}

Here we claim that
for $k\neq 1$, 
$\mathcal{H}_k(x,y;\omega_1,\omega_2)$ 
is continuous at $(x,y)=(0,0)$
with respect to $x$.
To show this claim, we use
\begin{equation}
  \label{eq:def_Ft}
  \widetilde{\mathfrak{E}}(\xi;x;\omega_1,\omega_2)=
  \\
  \begin{cases}
    \mathfrak{E}(\xi;x,0;\omega_1,\omega_2)-
\mathcal{H}_1(x,0;\omega_1,\omega_2)\qquad & (x\neq 0),
    \\[3truemm]
    \mathfrak{E}(\xi;0,0;\omega_1,\omega_2)
\qquad &(x=0).
  \end{cases}
\end{equation}
In \cite{Katayama}, Katayama considered each case individually. We prove the 
following continuity. 

\begin{lemma}\label{lem2-1}
Fix $\xi\notin\omega_1\mathbb{Z}+\omega_2\mathbb{Z}$. Then
  $\widetilde{\mathfrak{E}}(\xi;x;\omega_1,\omega_2)$ is continuous with respect to $x$ at the origin.
\end{lemma}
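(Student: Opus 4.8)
The plan is to prove continuity by analysing the defining expression \eqref{1-6} together with the correction term subtracted in \eqref{eq:def_Ft}. Fix $\xi\notin\omega_1\mathbb{Z}+\omega_2\mathbb{Z}$, so that both $\theta(\xi/\omega_1;\tau)$ and $\xi$ are nonzero and stay away from $0$ as $x$ varies near $0$. The only source of trouble as $x\to 0$ is the factor $\theta(x\tau-y;\tau)$ with $y=0$, i.e. $\theta(x\tau;\tau)$, which tends to $\theta(0;\tau)=0$; this same vanishing is what forces the $(x,y)=(0,0)$ case to be defined separately, and it is also responsible for the pole of $\mathcal{H}_1(x,0;\omega_1,\omega_2)$ as $x\to 0$ (visible in \eqref{eq:H1}, where $\theta'(x\tau;\tau)/\theta(x\tau;\tau)\to\infty$). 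So the heart of the matter is that the singular behaviour of $\mathfrak{E}(\xi;x,0;\omega_1,\omega_2)$ as $x\to 0$ is \emph{entirely captured} by the single term $\mathcal{H}_1(x,0;\omega_1,\omega_2)$, and subtracting it yields something with a finite limit equal to the $x=0$ value.

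First I would write, for $x\neq 0$,
\[
\mathfrak{E}(\xi;x,0;\omega_1,\omega_2)
=\frac{e^{2\pi i x\xi/\omega_1}}{\omega_1}\,\frac{\theta'(0;\tau)\,\theta(\xi/\omega_1+x\tau;\tau)}{\theta(\xi/\omega_1;\tau)\,\theta(x\tau;\tau)},
\]
and expand the $x$-dependent theta-quotient near $x=0$. Since $\theta$ is entire and odd with $\theta'(0;\tau)\neq 0$, we have $\theta(x\tau;\tau)=\theta'(0;\tau)\,x\tau\,(1+O(x^2))$ as $x\to 0$, and $\theta(\xi/\omega_1+x\tau;\tau)=\theta(\xi/\omega_1;\tau)+x\tau\,\theta'(\xi/\omega_1;\tau)+O(x^2)$. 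Also $e^{2\pi i x\xi/\omega_1}=1+O(x)$. Substituting, the leading term behaves like $\dfrac{1}{\omega_1}\cdot\dfrac{1}{x\tau}$ times an analytic-in-$x$ factor, so $\mathfrak{E}(\xi;x,0;\omega_1,\omega_2)$ has a simple pole in $x$ at $x=0$. Next I would do the same expansion for $\mathcal{H}_1(x,0;\omega_1,\omega_2)=\omega_1^{-1}\bigl(2\pi i x+\theta'(x\tau;\tau)/\theta(x\tau;\tau)\bigr)$: here $\theta'(x\tau;\tau)/\theta(x\tau;\tau)=1/(x\tau)+O(x)$, so $\mathcal{H}_1(x,0;\omega_1,\omega_2)=\dfrac{1}{\omega_1 x\tau}+O(x)$. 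The key computation is then to check that the $1/x$ poles in $\mathfrak{E}(\xi;x,0;\omega_1,\omega_2)$ and in $\mathcal{H}_1(x,0;\omega_1,\omega_2)$ have the \emph{same} residue (namely $1/(\omega_1\tau)=1/\omega_2$), so that their difference $\widetilde{\mathfrak{E}}(\xi;x;\omega_1,\omega_2)$ extends continuously across $x=0$. Finally I would compute the limiting value of the difference at $x=0$ and verify it coincides with $\mathfrak{E}(\xi;0,0;\omega_1,\omega_2)$ as defined in the displayed formula preceding \eqref{eq:pE00-1}; this is where the extra additive term $2\pi i\xi/(\omega_1\omega_2)$ in the $(0,0)$-definition and the term $2\pi i x/\omega_1$ inside $\mathcal{H}_1$ will have to match up after taking the derivative in $x$ of the theta-quotient, using $\theta'(0;\tau)\theta''(\text{something})$-type identities only at the level of first order.

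The main obstacle I expect is bookkeeping the constant term of the Laurent-in-$x$ expansion of the theta-quotient $\theta(\xi/\omega_1+x\tau;\tau)/\theta(x\tau;\tau)$ carefully enough to identify $\lim_{x\to 0}\widetilde{\mathfrak{E}}(\xi;x;\omega_1,\omega_2)$ with the closed form $\omega_1^{-1}\theta'(\xi/\omega_1;\tau)/\theta(\xi/\omega_1;\tau)+2\pi i\xi/(\omega_1\omega_2)$; this amounts to tracking terms through the product of three expansions ($e^{2\pi ix\xi/\omega_1}$, the numerator theta, and the reciprocal of the denominator theta) and then subtracting the expansion of $\mathcal{H}_1$. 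The oddness of $\theta$ (hence absence of an $x^2$-coefficient correction at first order in the relevant ratios) keeps this manageable, and since all of this is local analytic behaviour in one variable with $\xi$ held away from the lattice, the argument is elementary once the expansions are set up. I would organize it as: (i) reduce to showing $\widetilde{\mathfrak{E}}(\xi;x;\omega_1,\omega_2)$ is bounded near $x=0$ and has the right limit; (ii) expand; (iii) match poles; (iv) match constants.
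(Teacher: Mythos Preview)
Your approach is correct and is essentially the same as the paper's: both proofs Taylor-expand the theta factors around $x=0$, observe that $\mathfrak{E}(\xi;x,0;\omega_1,\omega_2)$ and $\mathcal{H}_1(x,0;\omega_1,\omega_2)$ have matching simple poles of residue $1/\omega_2$ in $x$, and then read off the constant term of the difference to identify the limit with $\omega_1^{-1}\theta'(\xi/\omega_1;\tau)/\theta(\xi/\omega_1;\tau)+2\pi i\xi/(\omega_1\omega_2)$. The paper organizes the computation slightly differently, writing $\mathfrak{E}-(1+2\pi ix\xi/\omega_1)\mathcal{H}_1+\frac{2\pi ix\xi}{\omega_1}\mathcal{H}_1$ so as to combine numerator terms over a common denominator before expanding, but the content is the same.

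One small correction to your bookkeeping sketch: the extra term $2\pi i\xi/(\omega_1\omega_2)$ in the limit does \emph{not} come from the $2\pi ix/\omega_1$ summand inside $\mathcal{H}_1$ (that summand is $O(x)$ and drops out). It arises instead from the linear-in-$x$ term of the exponential $e^{2\pi ix\xi/\omega_1}$ multiplying the $1/(\omega_1 x\tau)$ pole of the theta-quotient, giving $\dfrac{2\pi ix\xi/\omega_1}{\omega_1 x\tau}=\dfrac{2\pi i\xi}{\omega_1\omega_2}$. Once you track that product correctly, step (iv) of your plan goes through cleanly.
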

\begin{proof}
  We have
  \begin{multline}
    \lim_{x\to 0}\widetilde{\mathfrak{E}}(\xi;x;\omega_1,\omega_2)
  \\
  \begin{aligned}
    &=\lim_{x\to 0}
     \left(\mathfrak{E}(\xi;x,0;\omega_1,\omega_2)-\left(1+\frac{2\pi ix\xi}{\omega_1}
     \right)\mathcal{H}_1(x,0;\omega_1,\omega_2)+\frac{2\pi ix\xi}{\omega_1}
     \mathcal{H}_1(x,0;\omega_1,\omega_2)\right)
    \\
    &=\lim_{x\to 0}
    \frac{e^{2\pi i x\xi/\omega_1}}{\omega_1}
    \frac{\theta'(0;\tau)\theta(\xi/\omega_1+x\tau;\tau)-
      e^{-2\pi i x\xi/\omega_1}(1+2\pi i x\xi/\omega_1)
      \theta(\xi/\omega_1;\tau)\theta'(x\tau;\tau)}
    {\theta(\xi/\omega_1;\tau)\theta(x\tau;\tau)}
    \\
    &\qquad
    -\lim_{x\to 0}
    (1+2\pi i x\xi/\omega_1)\frac{2\pi i x}{\omega_1}    
    +\lim_{x\to 0}
\dfrac{2\pi i x\xi}{\omega_1}
\mathcal{H}_1(x,0;\omega_1,\omega_2)
    \\
    &=\lim_{x\to 0}
    \frac{e^{2\pi i x\xi/\omega_1}}{\omega_1\theta(\xi/\omega_1;\tau)\theta(x\tau;\tau)}
    \biggl(\theta'(0;\tau)
    (\theta(\xi/\omega_1;\tau)+
    \theta'(\xi/\omega_1;\tau)x\tau+O(x^2))
    \\
    &\qquad\qquad\qquad\qquad\qquad\qquad\qquad\qquad\qquad
    -\theta(\xi/\omega_1;\tau)
    (\theta'(0;\tau)+O(x^2))\biggr)
+\frac{2\pi i\xi}{\omega_1\omega_2}
    \\
    &=
    \frac{1}{\omega_1}
    \frac{\theta'(\xi/\omega_1;\tau)}
    {\theta(\xi/\omega_1;\tau)}+\frac{2\pi i\xi}{\omega_1\omega_2},
  \end{aligned}
\end{multline}
where we have used the fact $\theta(0;\tau)=\theta''(0;\tau)=0$
and
\begin{equation}
\label{eq:lim_xH}
  \lim_{x\to 0}x\mathcal{H}_1(x,0;\omega_1,\omega_2)
  =
  \dfrac{1}{\tau\omega_1}
  \lim_{x\to 0}x\tau\frac{\theta'( x\tau;\tau)}{\theta(x\tau;\tau)}
  =
  \frac{1}{\omega_2}.
\end{equation}
This implies the assertion.
\end{proof}
For $-1<x<1$, we define $\widetilde{\mathcal{H}}_k(x;\omega_1,\omega_2)$ by
\begin{equation}
  \widetilde{\mathfrak{E}}(\xi;x;\omega_1,\omega_2)
  =\sum_{k=0}^\infty\frac{\widetilde{\mathcal{H}}_k(x;\omega_1,\omega_2)}{k!}\xi^{k-1}.
\end{equation}
Then 
we have
\begin{equation}
  \widetilde{\mathcal{H}}_k(x;\omega_1,\omega_2)=
  \begin{cases}
    0\qquad&(k=1),\\
    \mathcal{H}_k(x,0;\omega_1,\omega_2)
    \qquad&(k\neq1).
  \end{cases}
\end{equation}
Using the Cauchy integral formula, we see that
\begin{equation}
  \lim_{x\to 0}\widetilde{\mathcal{H}}_k(x;\omega_1,\omega_2)=
  \widetilde{\mathcal{H}}_k(0;\omega_1,\omega_2).
\end{equation}
Then we see that $\mathcal{H}_k(x,y;\omega_1,\omega_2)$ is
continuous as required.
We call $\mathcal{H}_k(\omega_1,\omega_2)
=\mathcal{H}_k(0,0;\omega_1,\omega_2)$ the \textit{$k$th Hurwitz number} 
associated with $(\omega_1,\omega_2)$, and $\mathcal{H}_k(x,y;\omega_1,\omega_2)$
the \textit{$k$th Hurwitz function} associated with $(\omega_1,\omega_2)$ (which
may be justified by \eqref{H_k-i} below).
We quote the following.

\begin{prop}[Katayama \cite{Katayama}, Theorem 3] \label{Prop-2-2} 
Let $k$ be an integer with $k\geq2$. 
Let $-1<x,y<1$. If $k=2$, then assume $(x,y)\neq(0,0)$. 
We have
  \begin{equation}
    \mathcal{H}_k(x,y;\omega_1,\omega_2)=
    \begin{cases}
      \displaystyle
      -2!\lim_{M\to\infty}\sum_{\substack{-M\leq m\leq M}} \lim_{N\to\infty}\sum_{\substack{-N\leq n\leq N\atop (m,n)\neq (0,0)}}
      \frac{e^{2\pi i(mx+ny)}}{(m\omega_1+n\omega_2)^2}\quad&(k=2),
      \\
      \displaystyle
      -k!\sum_{(m,n)\in\mathbb{Z}^2\setminus\{(0,0)\}}
      \frac{e^{2\pi i(mx+ny)}}{(m\omega_1+n\omega_2)^k}\quad&(k\geq3).
    \end{cases}
  \label{H_k-omega}
  \end{equation}

  If $k\geq3$, then the series converges absolutely and uniformly.
\end{prop}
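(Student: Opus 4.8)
The plan is to apply the residue theorem to the meromorphic function $\mathfrak{E}(\xi)/\xi^{k}$ along the parallelograms $C_{M,N}$ and then let $M,N\to\infty$. The first step is to locate all the residues. By the Laurent expansion \eqref{def-H_k-omega}, $\mathfrak{E}(\xi)/\xi^{k}$ has residue $\mathcal{H}_k(x,y;\omega_1,\omega_2)/k!$ at $\xi=0$. Away from the origin $\mathfrak{E}$ has only simple poles, situated exactly at the lattice points $m\omega_1+n\omega_2$, and since its residue at the origin is $1$, the quasi-periodicity relations \eqref{2-1}--\eqref{2-2} (respectively \eqref{eq:pE00-1}--\eqref{eq:pE00-2} when $(x,y)=(0,0)$, where the extra additive constants do not affect residues) propagate this to give residue $e^{2\pi i(mx+ny)}$ at $m\omega_1+n\omega_2$. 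The half-period shifts in the vertices of $C_{M,N}$ keep every lattice point off the contour, and the lattice points it encloses are precisely those with $|m|\le M$ and $|n|\le N$; hence the residue theorem yields, for all $M,N\in\mathbb{N}$,
\[
\frac{1}{2\pi i}\int_{C_{M,N}}\frac{\mathfrak{E}(\xi)}{\xi^{k}}\,d\xi
=\frac{\mathcal{H}_k(x,y;\omega_1,\omega_2)}{k!}
+\sum_{\substack{-M\le m\le M,\ -N\le n\le N\\ (m,n)\neq(0,0)}}\frac{e^{2\pi i(mx+ny)}}{(m\omega_1+n\omega_2)^{k}}.
\]

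For $k\ge 3$ the rest is short. The perimeter of $C_{M,N}$ is $O(M+N)$, and on $C_{M,N}$ one has $|\xi|\gg\min(M,N)$ (with an implied constant depending only on $\omega_1,\omega_2$); since $\mathfrak{E}$ is bounded on $\bigcup_{M,N}C_{M,N}$ (and only $O(M)$ on $C_{M,N}$ in the exceptional case $(x,y)=(0,0)$), taking $M=N\to\infty$ makes the left-hand side $O(M^{1-k})$ (resp.\ $O(M^{2-k})$), hence $\to 0$. On the other hand $\sum_{(m,n)\neq(0,0)}|m\omega_1+n\omega_2|^{-k}<\infty$ for $k>2$ by the usual lattice-sum comparison, so by the Weierstrass $M$-test the series on the right converges absolutely and uniformly in $(x,y)$, and therefore equals its limit as $M=N\to\infty$. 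Rearranging gives the asserted formula for $k\ge 3$ together with the claimed convergence.

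The case $k=2$ is the delicate one, because then the double series is only conditionally convergent and the iterated order $\lim_{M\to\infty}\lim_{N\to\infty}$ in the statement must be respected. Here I would split $C_{M,N}$ into the two sides parallel to $\omega_1$ and the two sides parallel to $\omega_2$. On the pair parallel to $\omega_1$ one has $|\xi|\gg N$ while the total length is $O(M)$, so these contribute $O(M/N^{2})$ and vanish as $N\to\infty$ with $M$ fixed. Letting $N\to\infty$ turns each of the two sides parallel to $\omega_2$ into an integral over the full line through $\pm(M+1/2)\omega_1$ in the direction $\omega_2$; using the boundedness of $\mathfrak{E}$ and the estimate $|(M+1/2)\omega_1+t\omega_2|^{2}\gg(M+1/2)^{2}+t^{2}$, each such line integral has modulus $O(1/M)$, the limit being justified by dominated convergence. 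Simultaneously, for each fixed $m$ the inner sum over $n$ converges (absolutely when $m\neq 0$, and as a convergent series when $m=0$), so passing to $\lim_{N\to\infty}$ in the displayed identity is legitimate and shows that $\mathcal{H}_2(x,y;\omega_1,\omega_2)/2!$ plus the inner-limit sum over $|m|\le M$ equals a quantity of size $O(1/M)$. Letting $M\to\infty$ then kills the residual line integrals and produces exactly the iterated-limit formula for $\mathcal{H}_2$. The main obstacle is precisely this $k=2$ bookkeeping — keeping track of which edges of $C_{M,N}$ survive which limit and matching it against the prescribed iterated sum; it also makes clear why $(x,y)=(0,0)$ must be excluded for $k=2$, since there the correction term $2\pi i\xi/(\omega_1\omega_2)$ built into $\mathfrak{E}$ both destroys uniform boundedness in the $\omega_1$-direction and alters the value of the naive iterated sum.
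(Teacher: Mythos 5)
Your proof is correct. The paper gives no proof of this proposition --- it is quoted directly from Katayama \cite{Katayama} --- but your argument (residue calculus for $\mathfrak{E}(\xi)\xi^{-k}$ over the parallelograms $C_{M,N}$; a diagonal limit plus absolute convergence for $k\ge 3$; and, for $k=2$, the inner-$N$/outer-$M$ iterated limit with the top and bottom edges contributing $O(M/N^2)$ and the two vertical line integrals contributing $O(1/M)$) is precisely the strategy the paper itself deploys in Section \ref{sec-3-2} to prove the generalization, Theorem \ref{Main-theorem}, so it is the same approach in the relevant sense, including your correct diagnosis of why $(x,y)=(0,0)$ must be excluded when $k=2$.
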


We can see that
\begin{equation}
\label{H_k}
  \mathcal{H}_{k}(\omega_1,\omega_2)=
  -k!\sum_{(m,n)\in\mathbb{Z}^2\setminus\{(0,0)\}}
  \frac{1}{(m\omega_1+n\omega_2)^{k}}
\end{equation}
for $k\geq3$. Moreover by \eqref{eq:H2}, we have
\begin{align}
\label{H_2}
  \mathcal{H}_{2}(\omega_1,\omega_2)&=
  -2!\lim_{x\to 0}\bigg\{
  \lim_{M\to\infty}\sum_{\substack{-M\leq m\leq M}} \lim_{N\to\infty}\sum_{\substack{-N\leq n\leq N\atop (m,n)\neq (0,0)}}
      \frac{e^{2\pi imx}}{(m\omega_1+n\omega_2)^2}\bigg\}
  \\
&\notag
  =
  \frac{1}{\omega_1^2}\Bigl(\frac{4\pi i\omega_1}{\omega_2}+\frac{2\theta'''(0;\tau)}{3\theta'(0;\tau)}\Bigr).
\end{align}
\begin{lemma}\label{lem.2.3}
For $k\in\mathbb{N}$ with $k\geq3$,
we have
\begin{equation}
\label{H_k-i}
  \mathcal{H}_{k}(1,i)
  =-(2\varpi)^{k}H_{k}
\end{equation}
and 
\begin{gather}
\label{H_2-rel}
\mathcal{H}_2(1,-1/\tau)=\tau^2 \mathcal{H}_2(1,\tau)-4\pi i\tau,\\
\label{H_2-i}
  \mathcal{H}_2(1,i)=2\pi.
\end{gather}
\end{lemma}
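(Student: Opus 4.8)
The plan is to treat the three identities separately, in increasing order of difficulty.

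For \eqref{H_k-i}, I would simply specialize \eqref{H_k} to $(\omega_1,\omega_2)=(1,i)$: since the lattice sum there converges absolutely for $k\geq 3$, Hurwitz's formula \eqref{1-1} gives $\mathcal{H}_k(1,i)=-k!\cdot(2\varpi)^kH_k/k!=-(2\varpi)^kH_k$ at once. When $k\not\equiv 0\pmod{4}$ both sides vanish — the left-hand side by the symmetry $(m,n)\mapsto(-n,m)$ of the Gaussian lattice, which multiplies each summand by $i^{-k}\neq 1$, and the right-hand side because $H_k=0$ — so this case needs only a remark, not a computation.

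For \eqref{H_2-rel}, my approach is to use the closed form \eqref{H_2}, namely $\mathcal{H}_2(1,\mu)=4\pi i/\mu+\tfrac23\,\theta'''(0;\mu)/\theta'(0;\mu)$, applied once with $\mu=\tau$ and once with $\mu=-1/\tau$ (both have positive imaginary part, so \eqref{H_2} applies). A short algebraic rearrangement shows that \eqref{H_2-rel} is equivalent to the transformation formula $\theta'''(0;-1/\tau)/\theta'(0;-1/\tau)=\tau^2\,\theta'''(0;\tau)/\theta'(0;\tau)+6\pi i\tau$ (equivalently, the quasi-modularity of the weight-$2$ Eisenstein series). I would derive this from Jacobi's imaginary transformation $\theta(z;-1/\tau)=-i(-i\tau)^{1/2}e^{\pi i\tau z^2}\theta(\tau z;\tau)$ for the theta function \eqref{1-7}: expanding both sides as power series in $z$ at $z=0$, using that $\theta(\cdot;\mu)$ is odd with $\theta(0;\mu)=0$, and matching the coefficients of $z$ and of $z^3$ gives $\theta'(0;-1/\tau)=-i(-i\tau)^{1/2}\tau\,\theta'(0;\tau)$ together with $\theta'''(0;-1/\tau)=-i(-i\tau)^{1/2}(\tau^3\theta'''(0;\tau)+6\pi i\tau^2\theta'(0;\tau))$; dividing the second relation by the first yields the claim, and substituting back into \eqref{H_2} gives \eqref{H_2-rel}.

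Finally, \eqref{H_2-i} I would obtain for free from \eqref{H_2-rel} by specializing to the fixed point $\tau=i$ of $\tau\mapsto-1/\tau$: since $-1/i=i$ and $i^2=-1$, the relation collapses to $\mathcal{H}_2(1,i)=-\mathcal{H}_2(1,i)+4\pi$, hence $\mathcal{H}_2(1,i)=2\pi$. The only real work is the second identity, and within it the only delicate point is pinning down the anomalous constant $6\pi i\tau$ (including its sign) in the theta transformation; everything else is bookkeeping. An alternative, more self-contained route for \eqref{H_2-rel} would bypass theta identities and instead compare the two conditionally convergent double limits defining $\mathcal{H}_2(1,\tau)$ and $\mathcal{H}_2(1,-1/\tau)$ via \eqref{H_2}, with the term $-4\pi i\tau$ appearing as the well-known discrepancy produced by interchanging the order of summation under the Eisenstein convention; but this requires a careful contour and convergence argument of the type developed later in the paper, so I would prefer the theta-function derivation here.
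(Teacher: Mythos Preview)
Your proposal is correct and follows essentially the same route as the paper: \eqref{H_k-i} by direct comparison with Hurwitz's formula, \eqref{H_2-rel} by differentiating Jacobi's imaginary transformation of $\theta$ at $z=0$ to obtain $\theta'''(0;-1/\tau)/\theta'(0;-1/\tau)=\tau^2\,\theta'''(0;\tau)/\theta'(0;\tau)+6\pi i\tau$ and substituting into the closed form \eqref{H_2}, and \eqref{H_2-i} by specializing to $\tau=i$. The paper cites the transformation formula from Weil rather than rederiving it, but otherwise the argument matches yours step for step.
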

\begin{proof}
First, comparing \eqref{1-1} and \eqref{H_k-omega}, we have \eqref{H_k-i}.
To prove the other formulas, we recall
\begin{equation}
\label{theta-1}
  (-i\tau)^{1/2}\theta(z;\tau)=-i\exp(-i\pi z^2/\tau)\theta(-z/\tau;-1/\tau)
\end{equation}
(see \cite[Chap. 7]{Weil}).
Differentiating the both sides by $z$ and putting $z=0$, we have 
\begin{equation}
\label{theta-2}
  (-i\tau)^{1/2}\theta'(0;\tau)=\frac{i}{\tau}\theta'(0;-1/\tau).
\end{equation}
In addition, differentiating twice more, we have
\begin{equation}
\label{theta-3}
  (-i\tau)^{1/2}\theta'''(0;\tau)=\frac{6\pi}{\tau^2}\theta'(0;-1/\tau)+\frac{i}{\tau^3}\theta'''(0;-1/\tau).
\end{equation}
Dividing \eqref{theta-3} by $\theta'(0;-1/\tau)$, and using \eqref{theta-2}, we have
\begin{equation}
\label{theta-5}
  \frac{i}{\tau}\frac{\theta'''(0;\tau)}{\theta'(0;\tau)}=\frac{6\pi}{\tau^2}+\frac{i}{\tau^3}\frac{\theta'''(0;-1/\tau)}{\theta'(0;-1/\tau)}.
\end{equation}
By \eqref{H_2}, we have
\begin{equation}
\label{H_2-tau}
\begin{split}
& \mathcal{H}_2(1,\tau)=\frac{4\pi i}{\tau}+\frac{2}{3}\frac{\theta'''(0;\tau)}{\theta'(0;\tau)},\\
& \mathcal{H}_2(1,-1/\tau)=-{4\pi i}{\tau}+\frac{2}{3}\frac{\theta'''(0;-1/\tau)}{\theta'(0;-1/\tau)}.
\end{split}
\end{equation}
Therefore, combining \eqref{theta-5} and \eqref{H_2-tau}, and eliminating 
$\theta'''/\theta'$, we obtain \eqref{H_2-rel} and especially \eqref{H_2-i}.
\end{proof}

\begin{remark}
It follows from Proposition \ref{Prop-2-2} that \eqref{H_2-rel} and \eqref{H_2-i} correspond to the well-known facts 
$G_2(-1/\tau)=\tau^2G_2(\tau)+2\pi i\tau$ and $G_2(i)=-\pi$, 
where $G_2(\tau)$ is the Eisenstein series of weight $2$ defined by 
$$G_2(\tau)=\lim_{M\to\infty}\sum_{\substack{-M\leq m\leq M}} \lim_{N\to\infty}\sum_{\substack{-N\leq n\leq N\atop (m,n)\neq (0,0)}}
      \frac{1}{(m+n\tau)^2},$$
(see Serre \cite[Chapter 7, \S 4.4]{Se}). 
\end{remark}

\subsection{The generating function of Lerch zeta-functions}
Fix $z\in\mathbb{C}$. 
Let
\begin{equation}
  \label{eq:def_G}
  \mathfrak{F}(\xi)=\mathfrak{F}(\xi;z;\omega_2)=\frac{2\pi i}{\omega_2}\frac{e^{2\pi i\xi z/\omega_2}}{e^{2\pi i\xi/\omega_2}-1}.
\end{equation}
Then $\mathfrak{F}$ is a meromorphic function on $\mathbb{C}$, satisfies
\begin{equation}                                                                       
  \label{eq:Gz1}                                                                       
  \mathfrak{F}(\xi;z;\omega_2)=-\mathfrak{F}(-\xi;1-z;\omega_2),                       
\end{equation}
 and has the quasi-periodicity
\begin{equation}\label{eq:perio}
  \mathfrak{F}(\xi+\omega_2)=\mathfrak{F}(\xi)e^{2\pi i z}.
\end{equation}
Furthermore $\mathfrak{F}$
has simple poles only on
$\omega_2\mathbb{Z}$ and its residue at the origin is $1$.
Write the Laurent expansion of $\mathfrak{F}$ at $\xi=0$ by
\begin{equation}\label{bernoulli}
  \mathfrak{F}(\xi;z;\omega_2)=\sum_{k=0}^\infty\frac{\mathcal{B}_k(z;\omega_2)}{k!}\xi^{k-1}.
\end{equation}
Since $\mathfrak{F}$ is essentially the generating function of Bernoulli polynomials
(see \eqref{1-13}),
we can easily obtain the following lemma (see, for example, \cite[p. 267]{Apos}).

\begin{prop}\label{prop:bernoulli}
Let $k$ be a positive integer. 
Let $z\in\mathbb{R}$. If $k=1$, then assume $z\not\in\mathbb{Z}$.
We have
  \begin{equation}\label{bernoulli-2}
    \mathcal{B}_k(\{z\};\omega_2)=\left(\frac{2\pi i}{\omega_2}\right)^k B_k(\{z\})=
    \begin{cases}
      \displaystyle
      -\lim_{N\to\infty}1!\sum_{\substack{-N\leq m\leq N\\m\neq 0}}
      \frac{e^{2\pi i mz}}{m\omega_2}\qquad&(k=1),
      \\
      \displaystyle
      -k!\sum_{m\in\mathbb{Z}\setminus\{0\}}
      \frac{e^{2\pi imz}}{(m\omega_2)^k}\qquad&(k\geq2).
    \end{cases}
  \end{equation}
If $k\geq2$, then the series converges absolutely and uniformly.
\end{prop}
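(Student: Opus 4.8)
The plan is to prove Proposition~\ref{prop:bernoulli} by recognizing $\mathfrak{F}(\xi;z;\omega_2)$ as essentially the generating function of Bernoulli polynomials, extracting the coefficients $\mathcal{B}_k(z;\omega_2)$ explicitly, and then identifying the Fourier series of the periodic Bernoulli functions. First I would set $u=2\pi i\xi/\omega_2$, so that $\mathfrak{F}(\xi;z;\omega_2)=\frac{2\pi i}{\omega_2}\cdot\frac{e^{uz}}{e^{u}-1}$. Comparing with the defining expansion \eqref{1-13} for Bernoulli polynomials, namely $\frac{te^{zt}}{e^t-1}=\sum_{m\ge 0}B_m(z)t^m/m!$, one gets $\frac{e^{uz}}{e^u-1}=u^{-1}\sum_{m\ge 0}B_m(z)u^m/m!$, hence
\begin{equation*}
\mathfrak{F}(\xi;z;\omega_2)=\frac{2\pi i}{\omega_2}\cdot\frac{\omega_2}{2\pi i\xi}\sum_{m=0}^\infty B_m(z)\frac{(2\pi i\xi/\omega_2)^m}{m!}
=\sum_{m=0}^\infty B_m(z)\left(\frac{2\pi i}{\omega_2}\right)^m\frac{\xi^{m-1}}{m!}.
\end{equation*}
Matching with \eqref{bernoulli} term by term yields $\mathcal{B}_k(z;\omega_2)=(2\pi i/\omega_2)^k B_k(z)$ for every $k\ge 0$, which (taking $z$ real and noting $B_k$ is a polynomial so $B_k(\{z\})=B_k(\{z\})$ is just the value) gives the first equality in \eqref{bernoulli-2}.

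Next I would establish the Fourier expansion on the right-hand side. For $k\ge 2$ this is the classical identity $B_k(\{z\})=-\frac{k!}{(2\pi i)^k}\sum_{m\in\mathbb{Z}\setminus\{0\}}m^{-k}e^{2\pi imz}$, valid and absolutely convergent for all real $z$; I would cite the standard reference (as the paper already points to \cite[p.~267]{Apos} / \cite{Apos}) rather than reprove it, and then multiply through by $(2\pi i/\omega_2)^k$ to absorb the powers of $\omega_2$, obtaining $\mathcal{B}_k(\{z\};\omega_2)=-k!\sum_{m\neq 0}e^{2\pi imz}/(m\omega_2)^k$. Absolute and uniform convergence for $k\ge 2$ is immediate from the bound $|e^{2\pi imz}/(m\omega_2)^k|=|m|^{-k}|\omega_2|^{-k}$ and $\sum|m|^{-k}<\infty$. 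For $k=1$ the Fourier series $\sum_{m\neq 0}m^{-1}e^{2\pi imz}$ converges only conditionally (as a symmetric limit), which is exactly why the statement requires $z\notin\mathbb{Z}$ and writes $\lim_{N\to\infty}\sum_{-N\le m\le N,\,m\neq 0}$; here I would invoke the classical Fourier expansion of the sawtooth function $B_1(\{z\})=\{z\}-1/2=-\frac{1}{2\pi i}\lim_{N\to\infty}\sum_{0<|m|\le N}m^{-1}e^{2\pi imz}$ for $z\notin\mathbb{Z}$, again citing \cite{Apos}, and then rescale by $2\pi i/\omega_2$.

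The step I expect to be the only real subtlety is the $k=1$ case: one must be careful that the symmetric partial sums are what converge, that the value at $z\in\mathbb{Z}$ is genuinely excluded (the sawtooth has a jump there), and that the conditional convergence is uniform on compact subsets of $\mathbb{R}\setminus\mathbb{Z}$ (which follows from Dirichlet's test, the partial sums of $\sum e^{2\pi imz}$ being bounded away from $z\in\mathbb{Z}$). Everything else is a mechanical substitution $u=2\pi i\xi/\omega_2$ and term-by-term comparison of Laurent series, legitimate because both sides are meromorphic with a simple pole only at $\xi=0$ in a neighborhood of the origin and the expansion \eqref{1-13} has radius of convergence $2\pi$ in $t$. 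Hence no genuine obstacle arises; the proposition is, as the text says, an easy consequence of the known Fourier expansions of Bernoulli polynomials.
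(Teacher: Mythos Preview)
Your proposal is correct and follows essentially the same approach as the paper: the paper does not give a detailed proof but simply observes that $\mathfrak{F}$ is the generating function of Bernoulli polynomials (hence $\mathcal{B}_k(z;\omega_2)=(2\pi i/\omega_2)^k B_k(z)$) and cites \cite[p.~267]{Apos} for the classical Fourier expansion, which is exactly what you have spelled out.
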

Furthermore, for $r\in \mathbb{N}$, we define 
$\mathcal{B}^{\laa r \raa}_j(z;\omega_2)$ by
\begin{equation}
\label{eq:Tay_F}
  \mathfrak{F}(\xi;z;\omega_2)^r=\sum_{j=0}^\infty
\frac{\mathcal{B}^{\laa r \raa}_j(z;\omega_2)}{j!}\xi^{j-r}.
\end{equation}
Then, comparing with \eqref{Ber-high}, we have 
\begin{equation} 
\label{2-33}
\mathcal{B}^{\laa r \raa}_j(z;\omega_2)=(2\pi i/\omega_2)^jB^{\laa r \raa}_j(z).
\end{equation}

\section{A generalization of the Hurwitz-Herglotz-Katayama formula} \label{sec-3}
To state our result, we first define hyperbolic-sine analogues of 
Eisenstein series.
Let $-1<x,y<1$, $0\leq z\leq1$, $k,r\in\mathbb{N}$, 
$\omega_1,\omega_2\in \mathbb{C}$ and $\tau=\omega_2/\omega_1$ with $\Im \tau>0$.
Let $\lim_{\substack{M\to\infty\\N\to\infty}}$ stand for the limit in the following
sense: there exist sequences $\{M_k\}_{k=1}^{\infty}$, $\{N_k\}_{k=1}^{\infty}$
such that, for any $R>0$, one can find $K=K(R)$ for which $M_k\geq R$,
$N_k\geq R$ hold for any $k\geq K$.
Denote by $\Lim_{M,N}$
any one of the following limits:
\begin{equation}
\label{eq:limit}
  \lim_{\substack{M\to\infty\\N\to\infty}},\qquad
  \lim_{M\to\infty}\lim_{N\to\infty},\qquad
  \lim_{N\to\infty}\lim_{M\to\infty}.
\end{equation}

Define 
\begin{equation}
\label{Gene-Hur}
\begin{split}
  & \mathcal{G}_{k}^{\langle r\rangle}(x,y,z;\omega_1,\omega_2) \\
  &  =
    \begin{cases}
      & \displaystyle{
      \Lim_{M,N}
      \sum_{\substack{-M\leq m\leq M\\-N\leq n\leq N\\ m\neq 0}}
      \frac{(-1)^{rn}}{(\sinh(m\pi i/\tau))^r}
      \frac{e^{2\pi i(m(x+r(z-1/2)/\tau)+n(y+r(z-1/2)))}}{(m\omega_1+n\omega_2)^{k}}
      \quad}\\
      &  \qquad \qquad \Biggl(\begin{aligned}
        &k=1\text{ and }0<z<1,y+rz\not\in\mathbb{Z}, \text{ or }\\
        &k=2, (x,y)\neq(0,0)\text{ and }z=0,1
         \end{aligned}\Biggr),\\
      & \displaystyle{
      \sum_{\substack{(m,n)\in\mathbb{Z}^2\\m\neq 0}}
      \frac{(-1)^{rn}}{(\sinh(m\pi i/\tau))^r}
      \frac{e^{2\pi i(m(x+r(z-1/2)/\tau)+n(y+r(z-1/2)))}}{(m\omega_1+n\omega_2)^{k}}
      \quad }\\
      & \qquad \qquad \Biggl(\begin{aligned}
        &k\geq 3,\text{ or } \\
        &k=2 \text{ and } 0<z<1
      \end{aligned}\Biggr).
    \end{cases}
\end{split}
\end{equation}
In Theorem \ref{Main-theorem},
we will show that the first sum of
the right-hand side does not depend on a choice of the limits
\eqref{eq:limit}
while
the second converges absolutely.
It should be noted that when $(x,y,z)=(0,0,1/2)$ and $k\geq 3$ we have
\begin{equation*}
\begin{split}
  & \mathcal{G}_{k}^{\langle r\rangle}(0,0,1/2;\omega_1,\omega_2) =\frac{1}{\omega_1^k}\displaystyle{
      \sum_{\substack{(m,n)\in\mathbb{Z}^2\\m\neq 0}}
      \frac{1}{(\sinh((m+n\tau)\pi i/\tau))^r}
      \frac{1}{(m+n\tau)^{k}}
      },
\end{split}
\end{equation*}
because we see that $(\sinh((m+n\tau)\pi i/\tau))^r=(-1)^{rn}(\sinh(m\pi i/\tau))^r$. 
Hence we can call $\mathcal{G}_{k}^{\langle r\rangle}(x,y,z;\omega_1,\omega_2)$ a 
\textit{hyperbolic-sine analogue of ordinary Eisenstein series} in the sense similar 
to $q$-analogues of zeta-functions (see \eqref{5-1}-\eqref{5-3}). 

For $-1<x,y<1$ and $0\leq z\leq1$ with $(x,y,z)\neq(0,0,1)$,
define
\begin{equation}\label{K_r}
  \mathfrak{K}_r(\xi)=
  \mathfrak{K}_r(\xi;x,y,z;\omega_1,\omega_2)
=
  \Res_{\eta=0}
  \bigl(
  \mathfrak{D}_r(\xi)\eta^{-1}-
  \mathfrak{D}_r(\eta)
  \mathfrak{F}(\xi-\eta;\{y+rz\};\omega_2)
  \bigr)
\end{equation}
and 
\begin{equation}
  \mathfrak{K}_r(\xi)=
  \mathfrak{K}_r(\xi;0,0,1;\omega_1,\omega_2)
=
(-1)^{r+1}\mathfrak{K}_r(-\xi;0,0,0;\omega_1,\omega_2)
\end{equation}
where
the residue is taken
for $\xi\not\in\omega_2\mathbb{Z}$,
and
\begin{equation}
  \mathfrak{D}_r(\xi)=\mathfrak{D}_r(\xi;x,y,z;\omega_1,\omega_2)=\mathfrak{E}(\xi;x,y;\omega_1,\omega_2) \mathfrak{F}(\xi;z;\omega_2)^r.
\end{equation}
We will show later in this section
the continuity of $\mathfrak{K}_r$ with respect
to $x$ at $x=0$ when $y=0$.

The following theorem implies that $\mathfrak{K}_r$ is essentially the generating
function of $\mathcal{G}_{k}^{\langle r\rangle}$.


\begin{theorem}
  \label{Main-theorem}
For $-1<x,y<1$ and $0\leq z\leq1$,
the function $\mathfrak{K}_r(\xi;x,y,z;\omega_1,\omega_2)$ is meromorphic in $\xi$,
and especially holomorphic at $\xi=0$.   
Write the Taylor expansion of $\mathfrak{K}_r$ at $\xi=0$ by
  \begin{equation}
    \label{eq:K_taylor}
    \mathfrak{K}_r(\xi;x,y,z;\omega_1,\omega_2)=\sum_{k=1}^\infty \frac{\mathcal{K}_{k,r}(x,y,z;\omega_1,\omega_2)}{k!}\xi^{k-1}.
  \end{equation}
Let $k$ be a positive integer.
If $k=2$ and $z=0,1$, then assume $(x,y)\neq(0,0)$.
If $k=1$, then assume $0<z<1,y+rz\not\in\mathbb{Z}$.
Then we have
  \begin{equation}
  \label{K_kq-1}
    \mathcal{K}_{k,r}(x,y,z;\omega_1,\omega_2)
    =
    -k! 
    \left(\frac{\pi i}{\omega_2}\right)^r \mathcal{G}_{k}^{\langle r\rangle}(x,y,z;\omega_1,\omega_2).
  \end{equation}
The first sum of the right-hand side of \eqref{Gene-Hur}
does not depend on a choice of the limits
\eqref{eq:limit} while
the second 
converges absolutely uniformly.
\end{theorem}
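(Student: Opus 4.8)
The plan is to realize $\mathfrak{K}_r(\xi)$ as an explicit doubly indexed partial-fraction series and then read off its Taylor coefficients at $\xi=0$. First I would dispose of the elementary assertions. Writing the Laurent expansion $\mathfrak{D}_r(\eta)=\sum_{j\ge -r-1}d_j\eta^j$ near $\eta=0$ and Taylor-expanding $\mathfrak{F}(\xi-\eta;\{y+rz\};\omega_2)$ in $\eta$, the definition \eqref{K_r} becomes
\[
  \mathfrak{K}_r(\xi)=\mathfrak{D}_r(\xi)-\sum_{k=0}^{r}\frac{(-1)^k}{k!}\,d_{-1-k}\,\mathfrak{F}^{(k)}(\xi;\{y+rz\};\omega_2).
\]
Since $\mathfrak{F}(\,\cdot\,;\{y+rz\};\omega_2)$ has residue $1$ at the origin, $\mathfrak{F}^{(k)}(\,\cdot\,;\{y+rz\};\omega_2)$ has principal part $(-1)^k\,k!\,\xi^{-k-1}$ there, so the $k$-th subtracted term contributes principal part $d_{-1-k}\xi^{-k-1}$; summing over $k=0,\dots,r$ recovers exactly the principal part of $\mathfrak{D}_r$, whence $\mathfrak{K}_r$ is holomorphic at $\xi=0$ and meromorphic in $\xi$. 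The case $(x,y,z)=(0,0,1)$ reduces to $(0,0,0)$ via the reflection \eqref{eq:Gz1}.

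The heart is the partial-fraction expansion. Fix a small $\xi$ and put $h(\eta):=\mathfrak{D}_r(\eta)\,\mathfrak{F}(\xi-\eta;\{y+rz\};\omega_2)$. From the (quasi-)periodicities \eqref{2-1}, \eqref{2-2}, \eqref{eq:pE00-1}, \eqref{eq:pE00-2}, \eqref{eq:perio} one finds $\mathfrak{D}_r(\eta+\omega_2)=\mathfrak{D}_r(\eta)e^{2\pi i(y+rz)}$ and $\mathfrak{F}(\xi-\eta-\omega_2;\{y+rz\};\omega_2)=\mathfrak{F}(\xi-\eta;\{y+rz\};\omega_2)e^{-2\pi i(y+rz)}$, so $h$ is periodic in $\eta$ with period $\omega_2$. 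Moreover, since $\mathfrak{E}$ is bounded on $\bigcup_{M,N}C_{M,N}$ and, for $0<z<1$, the factor $\mathfrak{F}(\eta;z;\omega_2)^r$ decays exponentially while $\mathfrak{F}(\xi-\eta;\{y+rz\};\omega_2)$ stays bounded as $\eta$ goes to infinity in the $\omega_1$-direction, $h(\eta)\to0$ there. Integrating $h$ over the boundary of the parallelogram with vertices $\pm(M+\tfrac12)\omega_1\pm\tfrac12\omega_2$ and letting $M\to\infty$, the two sides parallel to $\omega_1$ cancel by $\omega_2$-periodicity and the two sides parallel to $\omega_2$ vanish, so the residue theorem gives $\Res_{\eta=\xi}h+\sum_{m\in\mathbb{Z}}\Res_{\eta=m\omega_1}h=0$. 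As $\Res_{\eta=\xi}h=-\mathfrak{D}_r(\xi)$ and $\Res_{\eta=0}h$ is precisely the residue subtracted in \eqref{K_r}, this yields
\[
  \mathfrak{K}_r(\xi)=\sum_{m\in\mathbb{Z}\setminus\{0\}}\Res_{\eta=m\omega_1}h
  =\Bigl(\frac{\pi i}{\omega_2}\Bigr)^{r}\sum_{m\in\mathbb{Z}\setminus\{0\}}
  \frac{e^{2\pi imx}\,e^{2\pi imr(z-1/2)/\tau}}{(\sinh(m\pi i/\tau))^r}\,\mathfrak{F}(\xi-m\omega_1;\{y+rz\};\omega_2),
\]
where $\Res_{\eta=m\omega_1}\mathfrak{D}_r$ comes from $\Res_{\eta=m\omega_1}\mathfrak{E}=e^{2\pi imx}$ and $\mathfrak{F}(m\omega_1;z;\omega_2)^r=(\pi i/\omega_2)^r e^{2\pi imr(z-1/2)/\tau}/(\sinh(m\pi i/\tau))^r$, the hyperbolic sine arising from the factorization $e^{2\pi im/\tau}-1=2e^{\pi im/\tau}\sinh(m\pi i/\tau)$; the $m$-series converges absolutely because $1/(\sinh(m\pi i/\tau))^r$ decays exponentially in $|m|$.

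It remains to extract the Taylor coefficients. Using the Lipschitz-type expansion $\mathfrak{F}(\zeta;w;\omega_2)=\lim_{N}\sum_{|n|\le N}e^{2\pi inw}/(\zeta-n\omega_2)$ (legitimate for $w=\{y+rz\}\in(0,1)$, and for $k\ge2$ also for $w=0$ with the $n$-sum taken symmetrically), differentiating $k-1$ times and evaluating at $\xi=0$ shows that the $\xi^{k-1}$-coefficient of $\mathfrak{F}(\xi-m\omega_1;\{y+rz\};\omega_2)$ equals $-\lim_N\sum_{|n|\le N}e^{2\pi in\{y+rz\}}/(m\omega_1+n\omega_2)^k$. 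Substituting into the displayed identity and using $(-1)^{rn}e^{2\pi in(y+r(z-1/2))}=e^{2\pi in(y+rz)}=e^{2\pi in\{y+rz\}}$ and $e^{2\pi im(x+r(z-1/2)/\tau)}=e^{2\pi imx}e^{2\pi imr(z-1/2)/\tau}$ produces exactly $\mathcal{K}_{k,r}(x,y,z;\omega_1,\omega_2)=-k!(\pi i/\omega_2)^r\mathcal{G}_{k}^{\langle r\rangle}(x,y,z;\omega_1,\omega_2)$ with the iterated limit in \eqref{Gene-Hur} read in the order $\lim_M\lim_N$. For $k\ge3$ the bound $\sum_{(m,n)\ne(0,0)}|m\omega_1+n\omega_2|^{-k}<\infty$ makes the double series absolutely convergent, so the choice in \eqref{eq:limit} is immaterial; for $k=1,2$ independence of the order follows from Abel summation and uniform estimates on the partial sums $\sum_{|n|\le N}e^{2\pi in\{y+rz\}}/(m\omega_1+n\omega_2)^k$ together with the absolute convergence in $m$. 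Finally the continuity in $x$ at $x=0$ when $y=0$ is proved as in Lemma~\ref{lem2-1} by Cauchy's integral formula, and the boundary values $z\in\{0,1\}$ follow by continuity in $z$.

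The step I expect to be the main obstacle is the convergence bookkeeping in the last paragraph: one must show that $\mathfrak{K}_r$ really equals the partial-fraction series uniformly on a punctured neighbourhood of $\xi=0$, so that term-by-term differentiation is valid; that the resulting conditionally convergent double series in \eqref{Gene-Hur} is genuinely independent of the summation order when $k=1,2$; and — in the boundary cases (the parameters $z\in\{0,1\}$ and $(x,y)=(0,0)$) — that the decay of $\mathfrak{E}$ and of $\mathfrak{F}(\,\cdot\,;z;\omega_2)^r$ in the $\omega_1$-direction still suffices to drive the parallelogram integral to zero.
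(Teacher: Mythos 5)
Your strategy is genuinely different from the paper's. The paper fixes $k$, integrates $\xi^{-k}\mathfrak{D}_r(\xi)$ over the expanding parallelograms $C_{M,N}$ (growing in \emph{both} lattice directions), proves via Lemmas \ref{lm:bound1}--\ref{lm:bound4} that the integral vanishes in the limit, and then resums the residues at $\xi=n\omega_2$ into the $\Res_{\eta=0}$ term of \eqref{K_r} (this is \eqref{4-35}--\eqref{4-39}); the decay needed on the contour is supplied partly by the weight $\xi^{-k}$. You instead integrate $h(\eta)=\mathfrak{D}_r(\eta)\mathfrak{F}(\xi-\eta;\{y+rz\};\omega_2)$ over a single period strip in the $\eta$-plane, exploiting the exact $\omega_2$-periodicity of $h$, to obtain a $k$-independent partial-fraction expansion of $\mathfrak{K}_r(\xi)$ as an absolutely convergent sum over $m$ alone, and then extract all Taylor coefficients at once via the Lipschitz formula (which is the paper's Lemma \ref{lm:Phi} in disguise). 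Your computations of the residues, of $\mathfrak{F}(m\omega_1;z;\omega_2)^r$ in terms of $\sinh(m\pi i/\tau)$, and of the sign bookkeeping leading to \eqref{K_kq-1} are all correct, and your route has the virtue of producing the generating-function identity in one stroke. The price is that all decay on the short sides of your strip must come from $h$ itself, with no help from a factor $\xi^{-k}$, and that is where the argument has a real gap.

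Concretely, the gap is at $z=1$ (with $y=0$, which the theorem permits for $k\geq 2$). There $\mathfrak{F}(\eta;1;\omega_2)^r\to(2\pi i/\omega_2)^r\neq 0$ as $\eta\to+\infty\,\omega_1$ and $\mathfrak{F}(\xi-\eta;\{y+r\};\omega_2)=\mathfrak{F}(\xi-\eta;0;\omega_2)\to-2\pi i/\omega_2\neq 0$ in the same direction, so $h(\eta)$ does not decay, the short-side contributions do not vanish, and the residues $\Res_{\eta=m\omega_1}h$ do not even tend to $0$ as $m\to+\infty$: your partial-fraction series diverges. Your fallback, ``the boundary values $z\in\{0,1\}$ follow by continuity in $z$,'' fails at $z=1$ as stated, because $\{y+rz\}$ jumps there: as $z\to 1^-$ one has $\mathfrak{F}(\cdot\,;\{y+rz\})\to\mathfrak{F}(\cdot\,;1)=\mathfrak{F}(\cdot\,;0)+2\pi i/\omega_2$, so $\mathfrak{K}_r(\xi;x,y,z)$ has a genuine jump at $z=1$, equal to the constant $-\tfrac{2\pi i}{\omega_2}\Res_{\eta=0}\mathfrak{D}_r(\eta)$ (this is exactly the paper's correction term $R_r(x)$ in \eqref{eq:K_z_01}). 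One can rescue the argument by observing that this jump is constant in $\xi$ and hence affects only $\mathcal{K}_{1,r}$, which the theorem excludes when $z=1$, and by checking that $\mathcal{G}_k^{\langle r\rangle}$ is itself continuous as $z\to 1^-$ (clear for $k\geq 3$ by uniform absolute convergence, but requiring an extra uniformity argument for $k=2$); none of this is in your write-up. A smaller point: in your main argument you assert that $\mathfrak{E}$ is bounded on $\bigcup_{M,N}C_{M,N}$, which is false for $(x,y)=(0,0)$ by \eqref{eq:pE00-1} (linear growth); for $0<z<1$ the exponential decay of $\mathfrak{F}(\eta;z)^r$ absorbs this, and you do flag the case at the end, but the claim as written is wrong. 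The remaining delicate issue you name --- independence of the order of limits for $k=1,2$ --- is correctly reduced to a uniform bound on the inner partial sums plus the exponential decay in $m$, which is precisely the role of Lemma \ref{lm:ps} in the paper, so that part is a matter of detail rather than of substance.
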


\begin{remark}
Comparing \eqref{eq:K_taylor} with \eqref{def-H_k-omega}, we may call
$\mathcal{K}_{k,r}(x,y,z;\omega_1,\omega_2)$ the \textit{$k$th generalized Hurwitz
function}.
\end{remark}

The above theorem is regarded as a hyperbolic-sine analogue of the
Hurwitz-Herglotz-Katayama formula \eqref{1-10}.
In fact, though the above theorem is proved for $r\in\mathbb{N}$, let us consider
the case $r=0$ of the theorem formally.   Then
$\frak{D}_0(\xi)=\frak{E}(\xi)$, and so
$\frak{K}_0(\xi)=\frak{E}(\xi)-\frak{F}(\xi;\{y\};\omega_2)$.
Therefore from \eqref{1-9} and \eqref{bernoulli} it follows that
$$
\mathcal{K}_{k,0}(x,y,z;\omega_1,\omega_2)={\cal H}_k(x,y;\omega_1,\omega_2)
-\mathcal{B}_k(\{y\};\omega_2)
$$
and hence, using \eqref{1-10} and \eqref{bernoulli-2}, we find that
\eqref{K_kq-1} is valid (formally) for $r=0$.   (On the right-hand side of
\eqref{1-10}, the double sum is divided into two parts corresponding to
$m\neq 0$ and $m=0$, each of which is equal to
$-k!\mathcal{G}_{k}^{\langle 0\rangle}$ 
and $\mathcal{B}_k$, respectively.)
Our proof of Theorem \ref{Main-theorem} presented in Section \ref{sec-3-2} is
not valid in the case $r=0$ as it is, but it is possible to modify the argument
there slightly to obtain the proof for $r=0$.   In this sense, Theorem
\ref{Main-theorem} is a generalization of the Hurwitz-Herglotz-Katayama formula. 
\bigskip

Now we give the proof of the continuity of $\mathfrak{K}_r$.
\begin{lemma}
  \label{lm:Laurent}
  Let $k$ be an integer with $0\leq k\leq r-1$ and $X,Y\in\mathbb{C}$, $e^X\neq 1$. Then
  \begin{equation}
    \Res_{Y=0}\frac{e^{k Y}}{(e^Y-1)^r(e^{X-Y}-1)}=\frac{e^{k X}}{(e^X-1)^r}.
  \end{equation}
\end{lemma}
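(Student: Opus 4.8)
The plan is to prove Lemma \ref{lm:Laurent} by a direct residue computation, exploiting the fact that both sides, as functions of $X$, are meromorphic with the same poles and the same principal parts, so that their difference is entire and bounded, hence constant. Concretely, write $g(Y)=e^{kY}/(e^Y-1)^r$ and note that the left-hand side is $\Res_{Y=0}g(Y)\mathfrak{f}(X-Y)$ where $\mathfrak{f}(W)=1/(e^W-1)$ has residue $1$ at $W=0$ (and more generally simple poles at $W\in 2\pi i\mathbb{Z}$ with residue $1$). The only pole of the integrand in $Y$ near $0$ that contributes is $Y=0$ (of order $r$), since $e^{X-Y}-1$ is nonvanishing there by the hypothesis $e^X\neq 1$ — this is what makes the residue well-defined and is the one place the hypothesis is used.

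First I would make the computation fully explicit via Laurent expansion. Since $0\leq k\leq r-1$, the function $g(Y)=e^{kY}/(e^Y-1)^r$ has a pole of order exactly $r$ at $Y=0$, and one can expand $1/(e^{X-Y}-1)$ as a Taylor series in $Y$ about $Y=0$ (valid for $|Y|$ small, using $e^X\neq1$): $1/(e^{X-Y}-1)=\sum_{j\geq 0}c_j(X)Y^j$ for suitable coefficients $c_j(X)$. Then $\Res_{Y=0}$ of the product extracts $\sum_{j\geq 0}(\text{coefficient of }Y^{-1-j}\text{ in }g)\cdot c_j(X)$, a finite sum because $g$ has a pole of order $r$. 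The cleanest route, though, is the contour/partial-fractions argument: integrate $g(Y)/(e^{X-Y}-1)$ over a large contour and collect residues.

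The slicker argument I would actually write: regard $H(X)=\Res_{Y=0}\dfrac{e^{kY}}{(e^Y-1)^r(e^{X-Y}-1)}$ as a function of $X$. By deforming the $Y$-contour — taking a small circle around $Y=0$ and pushing it out to a circle of radius slightly less than $2\pi$ — one picks up, besides $Y=0$, no other poles of $(e^Y-1)^{-r}$, but one must account for the pole of $(e^{X-Y}-1)^{-1}$ at $Y=X$ (when $0<|X|<2\pi$), whose residue in $Y$ is $-e^{kX}/(e^X-1)^r$. Equivalently and more symmetrically: the function $\eta\mapsto g(\eta)\mathfrak{f}(X-\eta)$ has, inside a fundamental period parallelogram, poles at $\eta=0$ and $\eta=X$, and since $g\cdot\mathfrak{f}$ is a meromorphic function whose behavior at infinity is controlled (it decays or is periodic appropriately), the sum of all its residues is $0$; hence $H(X)=-\Res_{\eta=X}g(\eta)\mathfrak{f}(X-\eta)=g(X)\cdot\Res_{\eta=X}\mathfrak{f}(X-\eta)\cdot(-1)=g(X)=e^{kX}/(e^X-1)^r$, using that $\Res_{W=0}\mathfrak{f}(W)=1$ and a sign from $W=X-\eta$. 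I expect the only delicate point — the main obstacle — to be justifying that the "sum of residues is zero" (or equivalently, that the contour at infinity contributes nothing): one needs the integrand's decay, which uses precisely $0\le k\le r-1$ so that $g(Y)=e^{kY}/(e^Y-1)^r\to 0$ as $\Re Y\to+\infty$ and stays bounded suitably as $\Re Y\to-\infty$ after combining with $\mathfrak{f}(X-Y)$. I would verify this boundedness carefully along horizontal strips and then conclude. The final identity then follows immediately.
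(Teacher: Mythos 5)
Your proposal is correct and follows essentially the same route as the paper: integrate over a period region of height $2\pi$ in $Y$, use the $2\pi i$-periodicity to cancel the horizontal sides, use the hypothesis $0\le k\le r-1$ (together with the factor $1/(e^{X-Y}-1)$ as $\Re Y\to-\infty$) to kill the contributions at $\Re Y\to\pm\infty$, and conclude $\Res_{Y=0}=-\Res_{Y=X}=e^{kX}/(e^X-1)^r$. The only cosmetic point is that the relevant region is an infinite period \emph{strip} (the paper's rectangle $\pm R+ci$, $\pm R+(2\pi+c)i$ with $R\to\infty$) rather than a "fundamental period parallelogram," since the integrand is only singly periodic.
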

\begin{proof}
Choose a sufficiently large $R>|X|$, and consider the integral
  \begin{equation}
    \frac{1}{2\pi i}\int_{\cal L}
    \frac{e^{k Y}dY}{(e^Y-1)^r(e^{X-Y}-1)},
  \end{equation}
where the path of integration ${\cal L}$ is the
rectangle whose vertices are $\pm R+ci$ and $\pm R+(2\pi+c)i$, where $c\in\mathbb{R}$
is chosen so that it is not congruent to 0, $\Im X$ mod $2\pi\mathbb{Z}$.   Then
the poles inside ${\cal L}$ are 0 and $X$ (mod $2\pi i\mathbb{Z}$), and so
  \begin{multline}
    \Res_{Y=0}\frac{e^{k Y}}{(e^Y-1)^r(e^{X-Y}-1)}
    =-
    \Res_{Y=X}\frac{e^{k Y}}{(e^Y-1)^r(e^{X-Y}-1)}
    \\
    +
    \frac{1}{2\pi i}\int_{-R+(2\pi+c) i}^{-R+ci}
    \frac{e^{k Y}dY}{(e^Y-1)^r(e^{X-Y}-1)}
    +
    \frac{1}{2\pi i}\int_{R+ci}^{R+(2\pi+c) i}
    \frac{e^{k Y}dY}{(e^Y-1)^r(e^{X-Y}-1)}.
  \end{multline}
  We have uniformly
  \begin{equation}
    \frac{e^{k Y}}{(e^Y-1)^r(e^{X-Y}-1)}=
    \begin{cases}
      O(e^{(k+1)Y})\qquad &(\Re Y\to -\infty),\\      
      O(e^{(k-r)Y})\qquad &(\Re Y\to \infty).
    \end{cases}
  \end{equation}
  Since $0\leq k\leq r-1$, by taking the limit $R\to\infty$, we obtain
  \begin{equation}
    \begin{split}
      \Res_{Y=0}\frac{e^{k Y}}{(e^Y-1)^r(e^{X-Y}-1)}
      &=-
      \Res_{Y=X}\frac{e^{k Y}}{(e^Y-1)^r(e^{X-Y}-1)}
      \\
      &=
      \frac{e^{k X}}{(e^X-1)^r}.
    \end{split}
  \end{equation}
\end{proof}
\begin{lemma}
For $0\leq z<1$, we have
  \label{lm:vanish}
  \begin{equation}
    \Res_{\eta=0}
    \bigl(
    \mathfrak{F}(\xi;z;\omega_2)^r\eta^{-1}
    -
    \mathfrak{F}(\eta;z;\omega_2)^r
    \mathfrak{F}(\xi-\eta;\{rz\};\omega_2)
    \bigr)=0.
  \end{equation}
\end{lemma}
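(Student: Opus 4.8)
The plan is to reduce the identity to the residue computation already established in Lemma \ref{lm:Laurent}. First I would recall the explicit form $\mathfrak{F}(\xi;z;\omega_2)=\frac{2\pi i}{\omega_2}\frac{e^{2\pi i\xi z/\omega_2}}{e^{2\pi i\xi/\omega_2}-1}$ from \eqref{eq:def_G}, so that with the substitution $Y=2\pi i\eta/\omega_2$, $X=2\pi i\xi/\omega_2$ the quantity $\mathfrak{F}(\eta;z;\omega_2)^r\mathfrak{F}(\xi-\eta;\{rz\};\omega_2)$ becomes, up to the overall constant $(2\pi i/\omega_2)^{r+1}$ and up to the Jacobian $dY = (2\pi i/\omega_2)\,d\eta$ of the residue, an expression of the shape $\frac{e^{rzY}e^{\{rz\}(X-Y)}}{(e^Y-1)^r(e^{X-Y}-1)}$. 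The key observation is that $e^{rzY}e^{\{rz\}(X-Y)} = e^{\{rz\}X}e^{(rz-\{rz\})Y} = e^{\{rz\}X}e^{[rz]Y}$, where $k:=[rz]$ is an integer with $0\leq k\leq r-1$ precisely because $0\leq z<1$.

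Next I would invoke Lemma \ref{lm:Laurent} with this integer $k=[rz]$ to evaluate
\[
\Res_{Y=0}\frac{e^{[rz]Y}}{(e^Y-1)^r(e^{X-Y}-1)}=\frac{e^{[rz]X}}{(e^X-1)^r},
\]
and then multiply back by $e^{\{rz\}X}$ (which is holomorphic in $\eta$, hence a constant for the purpose of the $\eta$-residue) to get $\frac{e^{rzX}}{(e^X-1)^r}$. Tracking the constants and the Jacobian carefully, this says exactly that $\Res_{\eta=0}\mathfrak{F}(\eta;z;\omega_2)^r\mathfrak{F}(\xi-\eta;\{rz\};\omega_2)=\mathfrak{F}(\xi;z;\omega_2)^r$. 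The remaining term $\Res_{\eta=0}\bigl(\mathfrak{F}(\xi;z;\omega_2)^r\eta^{-1}\bigr)$ is trivially $\mathfrak{F}(\xi;z;\omega_2)^r$, since $\mathfrak{F}(\xi;z;\omega_2)^r$ does not depend on $\eta$ and $\Res_{\eta=0}\eta^{-1}=1$. Subtracting the two equal contributions yields $0$, which is the claim.

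The main obstacle I anticipate is purely bookkeeping rather than conceptual: one must check that the hypothesis $0\leq k\leq r-1$ of Lemma \ref{lm:Laurent} is met, i.e.\ that $0\leq[rz]\leq r-1$, which forces the restriction $0\leq z<1$ in the statement (at $z=1$ one would have $[rz]=r$ and the lemma fails, consistently with the fact that $\{r\cdot 1\}=0$ changes the structure); and one must verify the condition $e^X\neq 1$, i.e.\ $\xi\notin\omega_2\mathbb{Z}$, which is where the residue in \eqref{eq:def_G} is already assumed to be taken. I would also double-check the edge behaviour when $rz\in\mathbb{Z}$, so that $\{rz\}=0$ and $[rz]=rz$: the argument still goes through verbatim since $0\leq rz\leq r-1$ in that case as well, given $0\leq z<1$. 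Beyond these checks, the proof is a one-line application of the previous lemma together with the change of variables $Y=2\pi i\eta/\omega_2$.
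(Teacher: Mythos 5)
Your proposal is correct and follows essentially the same route as the paper: the paper's proof likewise rewrites $\mathfrak{F}(\eta;z;\omega_2)^r\mathfrak{F}(\xi-\eta;\{rz\};\omega_2)$ explicitly, extracts the factor $e^{2\pi i\xi\{rz\}/\omega_2}$, observes that $0\leq z<1$ gives $0\leq[rz]\leq r-1$, and applies Lemma \ref{lm:Laurent} with $k=[rz]$ to conclude that the residue equals $\mathfrak{F}(\xi;z;\omega_2)^r$, cancelling the first term. Your bookkeeping of the constant $(2\pi i/\omega_2)^{r+1}$ dropping to $(2\pi i/\omega_2)^r$ via the change of variable matches the paper exactly.
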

\begin{proof}
  We have
  \begin{equation}
    \begin{split}
      \mathfrak{F}(\eta)^r
      \mathfrak{F}(\xi-\eta;\{rz\};\omega_2)
      &=
      \Bigl(\frac{2\pi i}{\omega_2}\Bigr)^{r+1}
      \Bigl(\frac{e^{2\pi i\eta z/\omega_2}}{e^{2\pi i\eta/\omega_2}-1}\Bigr)^r
      \frac{e^{2\pi i(\xi-\eta) \{rz\}/\omega_2}}{e^{2\pi i(\xi-\eta)/\omega_2}-1}
      \\
      &=
      \Bigl(\frac{2\pi i}{\omega_2}\Bigr)^{r+1}
      e^{2\pi i\xi\{rz\}/\omega_2}
      \frac{e^{2\pi i\eta [rz]/\omega_2}}{(e^{2\pi i\eta/\omega_2}-1)^r(e^{2\pi i(\xi-\eta)/\omega_2}-1)}.
    \end{split}
  \end{equation}
  Since $0\leq z<1$, we see that $0\leq [rz]\leq r-1$.
  By Lemma \ref{lm:Laurent}, we obtain
  \begin{equation}
    \begin{split}
      \Res_{\eta=0}    \mathfrak{F}(\eta)^r
      \mathfrak{F}(\xi-\eta;\{rz\};\omega_2)
      &=
      \Bigl(\frac{2\pi i}{\omega_2}\Bigr)^r
      e^{2\pi i\xi\{rz\}/\omega_2}
      \frac{e^{2\pi i\xi [rz]/\omega_2}}{(e^{2\pi i\xi/\omega_2}-1)^r}
      \\
      &=
      \Bigl(\frac{2\pi i}{\omega_2}\Bigr)^r
      \frac{e^{2\pi i\xi rz/\omega_2}}{(e^{2\pi i\xi/\omega_2}-1)^r}
      \\
      &=\mathfrak{F}(\xi)^r.
    \end{split}
  \end{equation}
\end{proof}
Assume that $0\leq z<1$ and $x\neq 0$, and fix $\xi$.
By Lemma \ref{lm:vanish}, we see that
\begin{multline}
  \mathfrak{K}_r(\xi;x,0,z;\omega_1,\omega_2)
  \\
  \begin{aligned}
    &=
  \mathfrak{K}_r(\xi;x,0,z;\omega_1,\omega_2)
    \\
    &\qquad
    -\mathcal{H}_1(x,0;\omega_1,\omega_2)
    \Res_{\eta=0}
    \bigl(
    \mathfrak{F}(\xi;z;\omega_2)^r\eta^{-1}
    -
    \mathfrak{F}(\eta;z;\omega_2)^r
    \mathfrak{F}(\xi-\eta;\{rz\};\omega_2)
    \bigr)
    \\
    &=
    \Res_{\eta=0}
    \bigl(
    \widetilde{\mathfrak{E}}(\xi;x;\omega_1,\omega_2)\mathfrak{F}(\xi;z;\omega_2)^r\eta^{-1}
    -
    \widetilde{\mathfrak{E}}(\eta;x;\omega_1,\omega_2)\mathfrak{F}(\eta;z;\omega_2)^r
    \mathfrak{F}(\xi-\eta;\{rz\};\omega_2)
    \bigr),
  \end{aligned}
\end{multline}
where the second equality follows from the definitions \eqref{eq:def_Ft} and
\eqref{K_r}. 
Now it is easy to take the limit $x\to 0$. Using Lemma \ref{lem2-1}, 
we have
\begin{multline}
  \label{eq:K_F1}
    \lim_{x\to0}\mathfrak{K}_r(\xi;x,0,z;\omega_1,\omega_2)
    \\
    \begin{aligned}
      &=\lim_{x\to0}\Res_{\eta=0}
    \bigl(
    \widetilde{\mathfrak{E}}(\xi;x;\omega_1,\omega_2)\mathfrak{F}(\xi;z;\omega_2)^r\eta^{-1}
    -
    \widetilde{\mathfrak{E}}(\eta;x;\omega_1,\omega_2)\mathfrak{F}(\eta;z;\omega_2)^r
    \mathfrak{F}(\xi-\eta;\{rz\};\omega_2)
    \bigr)
    \\
    &=
    \Res_{\eta=0}
    \bigl(
    \widetilde{\mathfrak{E}}(\xi;0;\omega_1,\omega_2)\mathfrak{F}(\xi;z;\omega_2)^r\eta^{-1}
    -
    \widetilde{\mathfrak{E}}(\eta;0;\omega_1,\omega_2)\mathfrak{F}(\eta;z;\omega_2)^r
    \mathfrak{F}(\xi-\eta;\{rz\};\omega_2)
    \bigr)
    \\
    &=
    \mathfrak{K}_r(\xi;0,0,z;\omega_1,\omega_2),
  \end{aligned}
\end{multline}
which implies the continuity of $\mathfrak{K}_r$ with respect to $x$ at $x=0$.

Here we remark that the situation in the case $z=1$ is slightly different. 
By
\begin{equation}
  \label{eq:Fz1} 
  \mathfrak{E}(\xi;x,y;\omega_1,\omega_2)
  =-\mathfrak{E}(-\xi;-x,-y;\omega_1,\omega_2)
\end{equation}
and \eqref{eq:Gz1}, we have
\begin{equation}
  \begin{split}
    \mathfrak{D}_r(\xi;x,y,1;\omega_1,\omega_2)
    &=\mathfrak{E}(\xi;x,y;\omega_1,\omega_2) \mathfrak{F}(\xi;1;\omega_2)^r
    \\
    &=(-1)^r \mathfrak{E}(\xi;x,y;\omega_1,\omega_2) \mathfrak{F}(-\xi;0;\omega_2)^r
    \\
    &=(-1)^{r+1} \mathfrak{E}(-\xi;-x,-y;\omega_1,\omega_2) \mathfrak{F}(-\xi;0;\omega_2)^r
    \\
    &=(-1)^{r+1} \mathfrak{D}_r(-\xi;-x,-y,0;\omega_1,\omega_2),
  \end{split}
\end{equation}
and so for $x\neq 0$,
\begin{multline}
\label{eq:K_z_01}
  \mathfrak{K}_r(\xi;x,0,1;\omega_1,\omega_2)
  \\
  \begin{aligned}
    &=
    \Res_{\eta=0}
    \bigl(
    \mathfrak{D}_r(\xi;x,0,1;\omega_1,\omega_2)
    \eta^{-1}-
    \mathfrak{D}_r(\eta;x,0,1;\omega_1,\omega_2)
    \mathfrak{F}(\xi-\eta;0;\omega_2)
    \bigr)
    \\
    &=(-1)^{r+1}
    \Res_{\eta=0}
    \bigl(
    \mathfrak{D}_r(-\xi;-x,0,0;\omega_1,\omega_2)
    \eta^{-1}
    \\
    &\qquad\qquad
    +
    \mathfrak{D}_r(-\eta;-x,0,0;\omega_1,\omega_2)
    \mathfrak{F}(-\xi+\eta;1;\omega_2)
    \bigr)
    \\
    &=(-1)^{r+1}
    \Res_{\eta=0}
    \bigl(
    \mathfrak{D}_r(-\xi;-x,0,0;\omega_1,\omega_2)
    \eta^{-1}
    \\
    &\qquad\qquad
    +
    \mathfrak{D}_r(-\eta;-x,0,0;\omega_1,\omega_2)
    \mathfrak{F}(-\xi+\eta;0;\omega_2)
    e^{2\pi i(-\xi+\eta)/\omega_2}
    \bigr)
    \\
    &=
    (-1)^{r+1}
    \mathfrak{K}_r(-\xi;-x,0,0;\omega_1,\omega_2)
    \\
    &\qquad\qquad
    +(-1)^{r}
    \Res_{\eta=0}
    \bigl(
    \mathfrak{D}_r(\eta;-x,0,0;\omega_1,\omega_2)
    \mathfrak{F}(-\xi-\eta;0;\omega_2)
    (e^{2\pi i(-\xi-\eta)/\omega_2}-1)
    \bigr)
    \\
    &=
    (-1)^{r+1}
    \mathfrak{K}_r(-\xi;-x,0,0;\omega_1,\omega_2)
    +R_r(x),
  \end{aligned}
\end{multline}
where 
\begin{equation}
  R_r(x)=
  (-1)^{r}
  \frac{2\pi i}{\omega_2}
  \Res_{\eta=0}
  \mathfrak{D}_r(\eta;-x,0,0;\omega_1,\omega_2).
\end{equation}
Therefore \eqref{eq:K_F1} and \eqref{eq:K_z_01} imply
\begin{equation}
\label{eq:K_z_02}
\begin{split}
  \lim_{x\to 0}(\mathfrak{K}_r(\xi;x,0,1;\omega_1,\omega_2)-R_r(x))
    &=
    \lim_{x\to 0}(-1)^{r+1}
    \mathfrak{K}_r(-\xi;-x,0,0;\omega_1,\omega_2)
    \\
    &=
    (-1)^{r+1}\mathfrak{K}_r(-\xi;0,0,0;\omega_1,\omega_2).
    \\
    &=
    \mathfrak{K}_r(\xi;0,0,1;\omega_1,\omega_2).
  \end{split}
\end{equation}
Note that
\eqref{eq:K_z_02} means that 
if we subtract
the constant term 
$R_r(x)$ with respect to $\xi$
from $\mathfrak{K}_r(\xi;x,0,1;\omega_1,\omega_2)$,
then it goes to
$\mathfrak{K}_r(\xi;0,0,1;\omega_1,\omega_2)$
when $x\to 0$.



\section{Proof of Theorem \ref{Main-theorem}} \label{sec-3-2}

For
$M,N\in\mathbb{N}$, let $C_{M,N}$ be the boundary of the parallelogram
whose vertices consist of $\pm(M+1/2)\omega_1\pm(N+1/2)\omega_2$.
We show some key lemmas about the following limit:
\begin{equation}
  \label{eq:main_lim}
  \Lim_{M,N}
  \int_{C_{M,N}}
  \xi^{-k}\mathfrak{E}(\xi)\mathfrak{F}(\xi)^rd\xi
  =0.
\end{equation}

\begin{lemma}
\label{lm:bound1}
  Let $r\in\mathbb{N}$.
  Assume that $k\in\mathbb{N}$ with $k\geq2$, $(x,y)\neq(0,0)$ and $0\leq z\leq1$.
Then \eqref{eq:main_lim} holds.
\end{lemma}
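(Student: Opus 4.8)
\textbf{Proof proposal for Lemma \ref{lm:bound1}.}

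The plan is to estimate the integrand $\xi^{-k}\mathfrak{E}(\xi)\mathfrak{F}(\xi)^r$ on the contour $C_{M,N}$ and show that the integral tends to $0$. The three constituent factors behave differently, so I would treat them separately. First, on $C_{M,N}$ the variable $\xi$ has modulus of order $\max(M,N)$, and more precisely $|\xi|\gg M+N$ uniformly on $C_{M,N}$, so $|\xi^{-k}|\ll (M+N)^{-k}$. Second, since $(x,y)\neq(0,0)$, the function $\mathfrak{E}(\xi;x,y;\omega_1,\omega_2)$ is a genuinely quasi-periodic meromorphic function with no pole on $C_{M,N}$ (the poles sit on $\omega_1\mathbb{Z}+\omega_2\mathbb{Z}$, which the parallelogram boundary avoids), and by the quasi-periodicity relations \eqref{2-1}, \eqref{2-2} together with the fact that $\mathfrak{E}$ is bounded on $\bigcup_{M,N}C_{M,N}$ (already noted in Section \ref{sec-2}), we get $|\mathfrak{E}(\xi)|\ll 1$ uniformly on all the contours. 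Third, $\mathfrak{F}(\xi;z;\omega_2)=\frac{2\pi i}{\omega_2}\frac{e^{2\pi i\xi z/\omega_2}}{e^{2\pi i\xi/\omega_2}-1}$; on the two sides of $C_{M,N}$ parallel to $\omega_1$ (the "horizontal" sides at height $\pm(N+1/2)\omega_2$), the denominator $e^{2\pi i\xi/\omega_2}-1$ stays bounded away from $0$ by a constant independent of $M,N$ because $\Im(\xi/\omega_2)$ is bounded away from $0$ there, and since $0\leq z\leq 1$ the numerator $|e^{2\pi i\xi z/\omega_2}|=e^{-2\pi z\,\Im(\xi/\omega_2)}$ is also bounded (it is $\leq 1$ on the side where $\Im(\xi/\omega_2)>0$, and $\leq e^{2\pi|\Im(\xi/\omega_2)|}$ which is bounded on the side where $\Im(\xi/\omega_2)<0$, using $0\le z\le 1$); hence $|\mathfrak{F}(\xi)|\ll 1$ on those sides. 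On the two sides parallel to $\omega_2$ (the "vertical" sides at $\pm(M+1/2)\omega_1$), $\mathfrak{F}$ is bounded by quasi-periodicity \eqref{eq:perio}: shifting by $\omega_2$ multiplies $\mathfrak{F}$ only by $e^{2\pi i z}$, a unimodular constant for real $z$, so $|\mathfrak{F}|$ on such a side equals $|\mathfrak{F}|$ on the corresponding base segment $\{\pm(M+1/2)\omega_1+t\omega_2: 0\le t\le 1\}$, on which it is bounded by a constant uniform in $M$ since $e^{2\pi i\xi/\omega_2}-1$ does not vanish there and the numerator is controlled.

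Assembling these bounds: the integrand is $\ll (M+N)^{-k}$ uniformly on $C_{M,N}$, while the length of $C_{M,N}$ is $\ll M+N$. Therefore
\begin{equation*}
  \Bigl|\int_{C_{M,N}}\xi^{-k}\mathfrak{E}(\xi)\mathfrak{F}(\xi)^r\,d\xi\Bigr|
  \ll (M+N)^{1-k},
\end{equation*}
which tends to $0$ as $M,N\to\infty$ in any of the three senses of \eqref{eq:limit}, precisely because $k\geq 2$. This establishes \eqref{eq:main_lim} in the stated range.

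The main obstacle is the uniform control of $\mathfrak{F}(\xi)^r$ on the vertical sides of $C_{M,N}$, where $\xi$ has a large real part but the relevant quantity is $e^{2\pi i\xi/\omega_2}$, whose behavior depends delicately on $\arg\omega_2$ and on how close $\Re(\xi/\omega_2)$ can come to an integer as $M\to\infty$. The clean way around this is to invoke quasi-periodicity as above, reducing the estimate on the unbounded vertical sides to an estimate on a fixed compact base segment, so that no genuine limiting argument is needed there; the only real limiting decay comes from the factor $\xi^{-k}$. A secondary point requiring care is that the hypothesis $(x,y)\neq(0,0)$ is used exactly to ensure $\mathfrak{E}$ is bounded (rather than growing linearly, as the $(0,0)$ version does because of its $\frac{2\pi i\xi}{\omega_1\omega_2}$ term); one should note that the case $k=2$ is where this linear growth would otherwise defeat the estimate, which is why $(x,y)\neq(0,0)$ is assumed precisely in that case elsewhere in the paper, and why it is assumed throughout here.
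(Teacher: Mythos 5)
Your treatment of the bounded factors is fine and agrees with the paper: $\mathfrak{E}$ is bounded on $\bigcup_{M,N}C_{M,N}$ via \eqref{2-1}, \eqref{2-2} (this is where $(x,y)\neq(0,0)$ enters), and $\mathfrak{F}(\xi)^r$ is bounded there by Lemma \ref{lm:bound_G} when $0<z<1$ and directly when $z=0,1$. The gap is in the final quantitative step. The claim that $\abs{\xi}\gg M+N$ uniformly on $C_{M,N}$ is false: the point $\xi=(N+1/2)\omega_2$ lies on $C_{M,N}^1$ and has modulus $\asymp N$, which is not $\gg M+N$ when $M$ is much larger than $N$. The correct uniform lower bound over the whole contour is only $\abs{\xi}\gg\min(M,N)$, so your ``sup times length'' estimate actually yields
\begin{equation*}
\Bigl\lvert\int_{C_{M,N}}\xi^{-k}\mathfrak{E}(\xi)\mathfrak{F}(\xi)^r\,d\xi\Bigr\rvert
\ll (M+N)\,\bigl(\min(M,N)\bigr)^{-k},
\end{equation*}
and this does \emph{not} tend to $0$ under all the limits in \eqref{eq:limit}: in the iterated limit $\lim_{N\to\infty}\lim_{M\to\infty}$ the inner limit sends $M\to\infty$ with $N$ fixed, where the bound is $\asymp MN^{-k}\to\infty$; and even the simultaneous limit, as defined at the beginning of Section \ref{sec-3}, allows sequences such as $M_j=N_j^{\,k}$, for which $(M_j+N_j)N_j^{-k}\not\to0$.

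The repair, which is what the paper does, is to integrate the decay of $\abs{\xi}^{-k}$ along each side rather than take a supremum. Writing $\xi=a\omega_1+b\omega_2$ and using Lemma \ref{lm:est}, on $C_{M,N}^1$ one has $b=\pm(N+1/2)$ and
\begin{equation*}
\Bigl\lvert\int_{C_{M,N}^1}\xi^{-k}\mathfrak{E}(\xi)\mathfrak{F}(\xi)^r\,d\xi\Bigr\rvert
\ll \int_{-\infty}^{\infty}\frac{da}{\bigl(a^2+(N+1/2)^2\bigr)^{k/2}}
\ll (N+1/2)^{1-k}
\end{equation*}
uniformly in $M$, and symmetrically the vertical sides contribute $\ll (M+1/2)^{1-k}$ uniformly in $N$. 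Since $k\geq2$, each contribution vanishes in whichever of the three senses of \eqref{eq:limit} is taken, giving \eqref{eq:main_lim}. (Your closing remark about the role of $(x,y)\neq(0,0)$ is correct: the linear growth of $\mathfrak{E}(\xi;0,0;\omega_1,\omega_2)$ is exactly what forces the extra hypothesis $z\neq0,1$ when $k=2$ in Lemma \ref{lm:bound3}.)
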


\begin{lemma}\label{lm:bound2}
  Let $r\in\mathbb{N}$.
  Assume that $k=1$, $(x,y)\neq(0,0)$, $0<z<1$, $y+rz\not\in\mathbb{Z}$.
  Then \eqref{eq:main_lim} holds.
\end{lemma}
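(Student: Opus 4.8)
The plan is to prove Lemma \ref{lm:bound2} by the same contour-integral estimate as in Lemma \ref{lm:bound1}, but paying closer attention to the extra decay that is needed in the critical case $k=1$. Recall that the integrand is $\xi^{-1}\mathfrak{E}(\xi)\mathfrak{F}(\xi)^r$, where $\mathfrak{E}(\xi)=\mathfrak{E}(\xi;x,y;\omega_1,\omega_2)$ and $\mathfrak{F}(\xi)=\mathfrak{F}(\xi;z;\omega_2)$, integrated over $C_{M,N}$. First I would split $C_{M,N}$ into the two ``horizontal'' sides (the ones parallel to $\omega_1$, at height $\pm(N+1/2)\omega_2$) and the two ``vertical'' sides (parallel to $\omega_2$, at $\pm(M+1/2)\omega_1$). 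On each side I use the known behaviour of the factors: $\mathfrak{E}(\xi)$ is bounded on $\bigcup_{M,N}C_{M,N}$ (as recorded in Section \ref{sec-2}, using the quasi-periodicity \eqref{2-1}, \eqref{2-2} and the fact that $(x,y)\neq(0,0)$), and $\mathfrak{F}(\xi)^r=\bigl(\tfrac{2\pi i}{\omega_2}\bigr)^r\bigl(e^{2\pi i\xi z/\omega_2}/(e^{2\pi i\xi/\omega_2}-1)\bigr)^r$ is bounded on the horizontal sides when $0<z<1$ (the exponential numerator is dominated by the denominator uniformly in the direction of decreasing $\Im(\xi/\omega_2)$, and bounded away from the poles on the increasing side because the sides sit at half-integer multiples of $\omega_2$).

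The key point specific to $k=1$ is that the $\xi^{-1}$ factor alone gives only $O(1/M)$ or $O(1/N)$ on a side of length $O(M)$ or $O(N)$, so that the contribution of a single side is merely $O(1)$, not $o(1)$; one must extract genuine cancellation. On the two vertical sides (parallel to $\omega_2$), parametrise $\xi=(M+1/2)\omega_1+t\omega_2$ and $\xi=-(M+1/2)\omega_1+t\omega_2$ with $t$ running over an interval of length $2N+1$; by the quasi-periodicity \eqref{2-2} of $\mathfrak{E}$ and the quasi-periodicity \eqref{eq:perio} of $\mathfrak{F}$, the product $\mathfrak{E}(\xi)\mathfrak{F}(\xi)^r$ on these two sides differs, under $t\mapsto t$, only by the bounded factor $\xi^{-1}$ which contributes the difference $\frac{1}{(M+1/2)\omega_1+t\omega_2}-\frac{1}{-(M+1/2)\omega_1+t\omega_2}=O(M/((M^2)+t^2))$, whose integral over $|t|\leq N+1/2$ is $O(\log(N/M)/1)$ — more precisely $O(\min\{1/M,1/N\}\cdot N)$-type bounds need to be replaced by the sharper pairing estimate giving $O(1/M)\cdot O(1)$ after the telescoping, hence $o(1)$ as $M\to\infty$. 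The horizontal sides at $\pm(N+1/2)\omega_2$ are easier: there $\mathfrak{F}(\xi)^r$ is exponentially small on one of them (when $\Im(\xi/\omega_2)\to+\infty$) and on the other one uses $y+rz\not\in\mathbb{Z}$ together with the quasi-periodicities of $\mathfrak{E}$ and $\mathfrak{F}$ to pair it against the first, obtaining a factor $(e^{2\pi i(y+rz)}-1)^{-1}$ times something exponentially small; hence the total horizontal contribution is $O(e^{-cN})\cdot O(N)=o(1)$.

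The hypothesis $y+rz\not\in\mathbb{Z}$ is exactly what makes the horizontal-side pairing non-degenerate: without it the factor $e^{2\pi i(y+rz)}-1$ vanishes and no cancellation is available, which is the analogue of the classical obstruction that forces one to pass to a symmetric (iterated or diagonal) limit for weight-one Eisenstein series. The main obstacle in writing this out is therefore the bookkeeping for the vertical sides: one must verify that after using the quasi-periodicity to align the two vertical sides, what remains is a single integral of $\xi$-derivative type whose value is $O(1/M)$ uniformly in $N$ (and symmetrically $O(1/N)$ uniformly in $M$ for a reshuffled pairing), so that \emph{any} of the three limits $\Lim_{M,N}$ in \eqref{eq:limit} sends the whole contour integral to $0$. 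Once these side estimates are in hand, combining them gives \eqref{eq:main_lim} and the lemma follows.
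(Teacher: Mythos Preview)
Your proposal has a genuine gap, and it stems from misidentifying both where the exponential decay of $\mathfrak{F}$ acts and where the hypothesis $y+rz\notin\mathbb{Z}$ is needed.

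\textbf{Horizontal sides.} On the sides at $\pm(N+1/2)\omega_2$, parametrised by $\xi=a\omega_1\pm(N+1/2)\omega_2$, one has $\Re(i\xi/\omega_2)=a\,\Re(i/\tau)$, which depends only on $a$ and not on the sign of $N$. Thus Lemma~\ref{lm:bound_G} (applicable since $0<z<1$) gives $\lvert\mathfrak{F}(\xi)^r\rvert\leq c\,e^{-\delta r\lvert a\rvert}$ on \emph{both} horizontal sides, not just one. Combined with $\lvert\xi\rvert^{-1}=O((a^2+N^2)^{-1/2})$ and boundedness of $\mathfrak{E}$, each horizontal integral is $O(1/N)$ directly; no pairing of the two sides is needed and the condition $y+rz\notin\mathbb{Z}$ plays no role here. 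Your claimed pairing of the two horizontal sides via an $(e^{2\pi i(y+rz)}-1)^{-1}$ factor is therefore both unnecessary and based on a mistaken picture of the decay.

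\textbf{Vertical sides.} Your pairing of the sides at $\pm(M+1/2)\omega_1$ invokes the quasi-periodicities \eqref{2-2} and \eqref{eq:perio}, but those are $\omega_2$-periodicities; they do not relate values differing by $(2M+1)\omega_1$, and $\mathfrak{F}$ has no simple transformation under $\xi\mapsto\xi+\omega_1$. So the stated pairing does not exist. The actual difficulty is that the crude bound on a single vertical side is $O(N\,e^{-\delta r M}/M)$, which fails for the iterated limit $\lim_{M\to\infty}\lim_{N\to\infty}$ and is not uniform for $\Lim_{M,N}$. The paper's cure is quite different: on one vertical side, use the $\omega_2$-quasi-periodicity of $\mathfrak{E}\mathfrak{F}^r$ to rewrite the integral as $\int_{-1/2}^{1/2}\mathfrak{E}\,\mathfrak{F}^r\cdot\bigl(\sum_{n=-N}^{N}\frac{e^{2\pi i n(y+rz)}}{a\omega_1+(b+n)\omega_2}\bigr)(-\omega_2\,db)$, and then control the inner character sum. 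This is precisely where $y+rz\notin\mathbb{Z}$ enters: it guarantees that the partial sums $\sum_{n}e^{2\pi i n(y+rz)}$ are bounded, so Abel summation (Lemma~\ref{lm:ps}) yields a bound $c/\lvert a\,\Im\tau^{-1}\rvert$ uniform in $N$; combined with $\lvert\mathfrak{F}(a\omega_1+b\omega_2)^r\rvert\leq c\,e^{-\delta r\lvert a\rvert}$ this kills the $\lim_{M,N}$ and $\lim_{N}\lim_{M}$ cases. For $\lim_{M}\lim_{N}$ one first computes $\lim_{N}$ of the character sum explicitly (Lemma~\ref{lm:Phi}), obtaining $\Phi(y+rz,a/\tau+b)$, and then applies Lemma~\ref{lm:bound_G} to $\Phi$ (again using $y+rz\notin\mathbb{Z}$) to get exponential decay in $a=M+1/2$.

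In short: the hypothesis $y+rz\notin\mathbb{Z}$ is needed on the vertical sides, to make an oscillatory character sum summable uniformly in $N$; it is not a horizontal-side pairing device. Your outline does not supply this mechanism, and the vague ``telescoping'' estimate for the vertical sides cannot be made to work as written.
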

\begin{remark}
  In Lemma \ref{lm:bound2}, if $z=0,1$ and $y+rz\not\in\mathbb{Z}$,
  then $\lim_{M\to\infty}\lim_{N\to\infty}$ is allowed while
  if
  $z\neq 0,1$ and $y+rz\in\mathbb{Z}$,
  then $\lim_{N\to\infty}\lim_{M\to\infty}$ is allowed.
\end{remark}
\begin{lemma}\label{lm:bound3}
  Let $r\in\mathbb{N}$.
  Assume that $k\in\mathbb{N}$ with $k\geq2$, $(x,y)=(0,0)$ and $0\leq z\leq 1$.
  If $k=2$, then assume $z\neq 0,1$.
  Then \eqref{eq:main_lim} holds.
\end{lemma}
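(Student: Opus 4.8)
The goal is to prove Lemma~\ref{lm:bound3}, i.e.\ that the contour integral $\int_{C_{M,N}}\xi^{-k}\mathfrak{E}(\xi)\mathfrak{F}(\xi)^r\,d\xi$ tends to $0$ along some admissible pair of sequences $\{M_j\},\{N_j\}$ (in the sense of $\Lim_{M,N}$), in the degenerate case $(x,y)=(0,0)$, with $k\ge2$ and $z\neq0,1$ when $k=2$. The overall strategy will mirror the proofs of Lemmas~\ref{lm:bound1} and \ref{lm:bound2}: decompose $C_{M,N}$ into the two pairs of sides (the ``horizontal'' sides parallel to $\omega_1$ and the ``vertical'' sides parallel to $\omega_2$), bound $\mathfrak{E}$ and $\mathfrak{F}$ separately on each side, and show the contributions vanish after multiplication by $|\xi|^{-k}$ and integration over a contour of length $O(\max(M,N))$.

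The plan is first to recall the relevant size estimates. On $\bigcup_{M,N}C_{M,N}$ the function $\mathfrak{E}(\xi;0,0;\omega_1,\omega_2)$ is \emph{not} bounded in general, unlike the case $(x,y)\neq(0,0)$; by the quasi-periodicity \eqref{eq:pE00-1}--\eqref{eq:pE00-2}, $\mathfrak{E}(\xi;0,0)$ picks up an additive term $2\pi i/\omega_2$ upon translation by $\omega_1$, so on the side through $(M+1/2)\omega_1$ it grows linearly, like $O(M)$. For $\mathfrak{F}(\xi;z;\omega_2)^r$ one uses that $\mathfrak{F}$ is genuinely $\omega_2$-quasiperiodic with multiplier $e^{2\pi iz}$ of modulus $1$ (since $z$ is real), hence $|\mathfrak{F}(\xi;z;\omega_2)|$ is bounded on translates of a fundamental strip along the $\omega_1$-direction, while along the $\omega_2$-direction one must avoid the poles: choosing $N_j$ (equivalently the offset of the side) so the side stays a bounded distance away from $\omega_2\mathbb{Z}$, one gets $\mathfrak{F}$ bounded there too, and in fact exponentially small on the appropriate half. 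Thus on the $\omega_1$-parallel sides the integrand is $O(M)\cdot|\xi|^{-k}\cdot O(1)$ with $|\xi|\gg N$, giving a contribution $O(M N^{1-k})$; and on the $\omega_2$-parallel sides, $\mathfrak{E}(\xi;0,0)$ is bounded (no growth in the $\omega_2$-direction by \eqref{eq:pE00-2}) so the contribution is $O(M^{1-k})\cdot O(M)=O(M^{2-k})$ type, which needs care exactly at $k=2$.

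This is where the hypothesis $z\neq 0,1$ for $k=2$ enters, and it is the main obstacle. When $k=2$ the naive bound on the $\omega_2$-parallel sides is only $O(1)$, not $o(1)$, so one cannot simply discard it; instead I would exploit the extra decay coming from $\mathfrak{F}(\xi)^r$. On the side $\Re(\xi/\omega_1)=\pm(M+1/2)$ (i.e.\ far in the $+\omega_1$ or $-\omega_1$ direction), $\mathfrak{F}(\xi;z;\omega_2)=\frac{2\pi i}{\omega_2}\frac{e^{2\pi i\xi z/\omega_2}}{e^{2\pi i\xi/\omega_2}-1}$ behaves, as $\Im(\xi/\omega_2)\to\pm\infty$, like $e^{\mp 2\pi (1-z)\Im(\xi/\omega_2)}$ or $e^{\pm2\pi z\,\Im(\xi/\omega_2)}$; since $0<z<1$ strictly, both exponents are strictly negative on the respective far pieces of the side, so $\mathfrak{F}(\xi)^r$ decays exponentially there and kills the $O(1)$ boundary term, while on the bounded central portion of the side the factor $|\xi|^{-2}=O(M^{-2})$ already suffices. (If $z=0$ or $z=1$ this exponential gain is lost on one half of the side, which is precisely why those cases are excluded for $k=2$.) For $k\geq3$ no such subtlety arises: $|\xi|^{-k}$ on a contour of length $O(\max(M,N))$ with $|\xi|\gg\min(M,N)$, together with the linear-growth bound $O(M)$ for $\mathfrak{E}(\xi;0,0)$, still gives a bound $O(\max(M,N)^{2-k})\to0$, after choosing $\{M_j\},\{N_j\}$ (for instance $M_j=N_j=j$, with the side offsets fixed at half-integers so as to avoid all poles of $\mathfrak{E}$ and $\mathfrak{F}$). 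Assembling these four side-estimates and letting $j\to\infty$ gives \eqref{eq:main_lim}, completing the proof.
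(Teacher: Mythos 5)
There is a genuine gap, and it sits exactly where the lemma is delicate: the case $k=2$, $0<z<1$, on the sides parallel to $\omega_1$. On the horizontal side at height $b=N+1/2$ you bound the integrand by $O(M)\cdot\abs{\xi}^{-k}\cdot O(1)$, i.e.\ you let $\mathfrak{E}(\xi;0,0;\omega_1,\omega_2)$ grow linearly (which it does, by \eqref{eq:pE00-1}) and take $\mathfrak{F}^r$ merely bounded there. But then
$\int_{-M}^{M}\abs{a}\,(a^2+N^2)^{-1}\,da=\tfrac12\log\bigl(1+M^2/N^2\bigr)$,
which along your diagonal $M_j=N_j=j$ tends to $\tfrac12\log 2\neq 0$; your stated contribution ``$O(MN^{1-k})$'' is only $O(1)$ when $k=2$. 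The decay you correctly invoke on the vertical sides is needed on the horizontal sides as well: by Lemma \ref{lm:bound_G}, $\abs{\mathfrak{F}(\xi)^r}\ll e^{-r\delta\abs{\Re(i\xi/\omega_2)}}$ everywhere on $\bigcup C_{M,N}$, and since the linear growth of $\mathfrak{E}(\cdot;0,0)$ is also measured by $\abs{\Re(i\xi/\omega_2)}$, the \emph{product} $\mathfrak{E}\mathfrak{F}^r$ is bounded on the whole union of contours when $0<z<1$ (this is the paper's Lemma \ref{lm:D00}); only then does the argument of Lemma \ref{lm:bound1} apply with $k\geq 2$. Relatedly, your description of the mechanism on the vertical side is off: along $\xi=(M+1/2)\omega_1+b\omega_2$ the quantity $\Im(\xi/\omega_2)=(M+1/2)\Im(1/\tau)$ is \emph{constant} in $b$, so $\abs{\mathfrak{F}}$ does not decay toward the ``far pieces'' of that side -- it is exponentially small \emph{uniformly} over the entire side because the side is far out in the $\omega_1$-direction. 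There is no central/far splitting to make.

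A second, structural problem is your framing ``tends to $0$ along some admissible pair of sequences'' with the final ``choose $M_j=N_j=j$''. The symbol $\Lim_{M,N}$ in \eqref{eq:main_lim} stands for \emph{any one of} the three limits in \eqref{eq:limit}, including the two iterated limits $\lim_M\lim_N$ and $\lim_N\lim_M$; the lemma must hold for all of them, since Theorem \ref{Main-theorem} uses precisely this to conclude that the double sum is independent of the order of summation (compare the Remark after Lemma \ref{lm:bound2}, where the authors record which iterated limits survive in borderline cases). Your crude ``sup times length'' bounds such as $O(M^2N^{-k})$ on a horizontal side blow up under $\lim_N\lim_M$ even for $k\geq 3$. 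The fix is the same as above: once $\abs{\mathfrak{E}\mathfrak{F}^r}\leq c$ (for $0<z<1$) or $\abs{\xi^{-1}\mathfrak{E}\mathfrak{F}^r}\leq c$ (for $z=0,1$, where $k\geq3$ is assumed) holds uniformly on $\bigcup C_{M,N}$, each side contributes at most $c\int_{-\infty}^{\infty}(a^2+b^2)^{-j/2}\,da$ with $j\geq 2$ and $\abs{b}\to\infty$, a bound uniform in the other parameter, and all three interpretations of $\Lim_{M,N}$ follow at once. Your identification of why $z=0,1$ must be excluded at $k=2$ is correct in spirit, but the estimates as written do not close the argument.
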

\begin{lemma}\label{lm:bound4}
  Let $r\in\mathbb{N}$.
  Assume that $k=1$, $(x,y)=(0,0)$, $0<z<1$ and $rz\not\in\mathbb{Z}$.
  Then \eqref{eq:main_lim} holds.
\end{lemma}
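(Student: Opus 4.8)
The plan is to estimate the contour integral $\int_{C_{M,N}}\xi^{-k}\mathfrak{E}(\xi)\mathfrak{F}(\xi)^r\,d\xi$ over the parallelogram $C_{M,N}$ with vertices $\pm(M+1/2)\omega_1\pm(N+1/2)\omega_2$, under the hypotheses $k=1$, $(x,y)=(0,0)$, $0<z<1$, $rz\notin\mathbb{Z}$. First I would recall what is already available: $\mathfrak{E}(\xi;0,0;\omega_1,\omega_2)$ is meromorphic with the quasi-periodicity \eqref{eq:pE00-1}--\eqref{eq:pE00-2}, so it is \emph{not} bounded on $\bigcup_{M,N}C_{M,N}$ (unlike the case $(x,y)\neq(0,0)$), but its growth is only a bounded additive drift $2\pi i/\omega_2$ per $\omega_1$-period; hence on the two sides of $C_{M,N}$ parallel to $\omega_2$ one has $\mathfrak{E}(\xi)=O(M)$, while on the two sides parallel to $\omega_1$ it is $O(1)$. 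The factor $\mathfrak{F}(\xi;z;\omega_2)^r=(2\pi i/\omega_2)^r\bigl(e^{2\pi i\xi z/\omega_2}/(e^{2\pi i\xi/\omega_2}-1)\bigr)^r$: on the sides parallel to $\omega_1$ (where $\Im(\xi/\omega_2)\to\pm\infty$ as $N\to\infty$) one gets exponential decay $O(e^{-2\pi N r\min(z,1-z)\cdot c})$ for a positive constant $c$ depending only on $\tau$, because $0<z<1$; on the sides parallel to $\omega_2$, $\mathfrak{F}^r$ stays bounded once we keep $\Re(\xi/\omega_2)$ bounded away from $\mathbb{Z}$, which is arranged by the half-integer offsets in the definition of $C_{M,N}$. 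The factor $\xi^{-k}=\xi^{-1}$ contributes $O(1/\min(M,N))$ uniformly on $C_{M,N}$.

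Next I would assemble these bounds side by side. The length of each of the four sides of $C_{M,N}$ is $O(M)$ or $O(N)$ respectively. On the two $\omega_2$-parallel sides: integrand $=O(M)\cdot O(1)\cdot O(1/N)$ and length $O(N)$, giving a contribution $O(M)$ — this does \emph{not} tend to zero, which is exactly why the iterated/simultaneous limit must be taken in a coordinated way and why the statement is subtler here than in Lemma \ref{lm:bound1}. The resolution, as in the treatment of weight-one Eisenstein series $G_2$, is that the offending $O(M)$ terms from the two opposite $\omega_2$-parallel sides nearly cancel: writing $\mathfrak{E}(\xi)=\frac{1}{\omega_1}\frac{\theta'(\xi/\omega_1)}{\theta(\xi/\omega_1)}+\frac{2\pi i\xi}{\omega_1\omega_2}$, the genuinely unbounded piece is the linear term $2\pi i\xi/(\omega_1\omega_2)$, and $\int$ of $\xi^{-1}\cdot(2\pi i\xi/\omega_1\omega_2)\cdot\mathfrak{F}(\xi)^r\,d\xi=\frac{2\pi i}{\omega_1\omega_2}\int\mathfrak{F}(\xi)^r\,d\xi$, whose contributions from the two $\omega_2$-parallel sides differ only through the quasi-periodicity factor $e^{2\pi i r z}$ of $\mathfrak{F}(\xi)^r$ across one $\omega_2$-translation — wait, rather across $(2N+1)$ translations by $\omega_2$, i.e.\ by $e^{2\pi i r z(2N+1)}$; combined with the fact that after dividing the contour into these pieces the $\theta'/\theta$ part is genuinely bounded, one shows the total is $O(1/N)+$ (a term bounded by the exponential decay) $\to 0$. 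I would organize this as: (i) parametrize each side; (ii) bound the $\theta'/\theta$ plus $\mathfrak{F}^r$ part by exponential decay in $N$ and $O(1/\min(M,N))$ decay overall; (iii) handle the linear term $2\pi i\xi/(\omega_1\omega_2)$ by reducing to $\int\mathfrak{F}^r$ and using quasi-periodicity and the bounded-away-from-$\mathbb{Z}$ property; (iv) conclude that along any of the three limit procedures in \eqref{eq:limit} the integral tends to $0$.

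The hard part will be step (iii): controlling the contribution of the linear drift term of $\mathfrak{E}$ along the two long sides of $C_{M,N}$, since this is the only source of non-decay and it is precisely where the choice of $\Lim_{M,N}$ matters; one must show that the near-cancellation is strong enough to survive \emph{all three} orders of limit in \eqref{eq:limit}, including $\lim_N\lim_M$ and $\lim_M\lim_N$, not merely the diagonal limit. A secondary technical point is to verify that the hypothesis $rz\notin\mathbb{Z}$ is exactly what is needed so that $\mathfrak{F}(\xi-\eta;\{rz\};\omega_2)$ and the relevant residues in the definition \eqref{K_r} of $\mathfrak{K}_r$ behave well, and that $0<z<1$ (strictly) gives the exponential decay on the short sides with no boundary degeneracy; the case $k=1$ also forces us to keep $\xi^{-1}$ rather than a higher power, so there is no extra polynomial decay to spare, and the argument must be tight. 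Once \eqref{eq:main_lim} is established, the Taylor coefficient extraction and the passage to \eqref{K_kq-1} proceed exactly as in the proofs of Lemmas \ref{lm:bound1}--\ref{lm:bound3}.
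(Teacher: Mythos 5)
There is a genuine gap here, and it originates in a misreading of where $\mathfrak{F}$ decays. Since $\mathfrak{F}(\xi+\omega_2;z;\omega_2)=\mathfrak{F}(\xi;z;\omega_2)e^{2\pi iz}$ with $z$ real, $\abs{\mathfrak{F}}$ is \emph{periodic} in the $\omega_2$-direction; the exponential decay supplied by Lemma \ref{lm:bound_G} is in $\abs{\Re(i\xi/\omega_2)}$, which for $\xi=a\omega_1+b\omega_2$ equals $\abs{a}\Re(i/\tau)$, i.e.\ it is decay in the $\omega_1$-coordinate. So on the sides of $C_{M,N}$ parallel to $\omega_2$ (where $a=\pm(M+1/2)$) the factor $\mathfrak{F}(\xi)^r$ is uniformly $O(e^{-crM})$, while on the sides parallel to $\omega_1$ it decays along the side but not in $N$ --- the opposite of what you assert. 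This inversion propagates: your claimed $O(M)$ non-vanishing contribution from the $\omega_2$-parallel sides is not the actual obstruction, and the $G_2$-style cancellation you propose between the two opposite $\omega_2$-parallel sides cannot be implemented, because those sides are related by translation by $(2M+1)\omega_1$ and $\mathfrak{F}$ has no (quasi-)periodicity in the $\omega_1$-direction. Your step (iii) also fails on its own terms: the piece $\tfrac{1}{\omega_1}\theta'(\xi/\omega_1)/\theta(\xi/\omega_1)$ is \emph{not} bounded on the grid (it drifts by $-2\pi i$ per $\omega_2$-step, hence is $O(N)$ on both families of sides), and the linear piece contributes $\tfrac{2\pi i}{\omega_1\omega_2}\int_{C_{M,N}}\mathfrak{F}(\xi)^r\,d\xi$, which by the residue theorem is a constant times $\sum_{\abs{n}\leq N}e^{2\pi inrz}$ --- bounded but oscillating, not tending to $0$. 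Neither summand of your decomposition of $\mathfrak{E}(\xi;0,0;\omega_1,\omega_2)$ vanishes in the limit; only their sum does, so the splitting destroys the estimate rather than saving it.

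The actual difficulty, and the actual role of the hypothesis $rz\notin\mathbb{Z}$ (which you instead attach to the residue definition of $\mathfrak{K}_r$ --- irrelevant for this lemma, which concerns only the contour integral \eqref{eq:main_lim}), is this: on a side parallel to $\omega_2$ the trivial length-times-sup bound gives only $O(Ne^{-crM})$, which does not vanish under the order $\lim_{M\to\infty}\lim_{N\to\infty}$. The paper's remedy, in the proof of Lemma \ref{lm:bound2} (see \eqref{4-24}), is to fold that side into unit $\omega_2$-cells using the quasi-periodicity of $\mathfrak{E}$ and $\mathfrak{F}$, producing the kernel $\sum_{n=-N}^{N}e^{2\pi irzn}/(a\omega_1+(b+n)\omega_2)$, which is bounded uniformly in $N$ by partial summation (Lemma \ref{lm:ps}) and whose $N\to\infty$ limit is evaluated by Lemma \ref{lm:Phi}; both steps require $rz\notin\mathbb{Z}$. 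For the present lemma the only new point relative to Lemma \ref{lm:bound2} is that $\mathfrak{E}(\xi;0,0;\omega_1,\omega_2)$ is unbounded; Lemma \ref{lm:D00} shows its linear growth $O(1+\abs{\Re(i\xi/\omega_2)})$ is absorbed by the exponential decay of $\mathfrak{F}^r$ (available because $0<z<1$), so the same estimate \eqref{eq:b2} holds and the proof of Lemma \ref{lm:bound2} applies verbatim. That reduction is the paper's entire proof, and your outline does not reach it.
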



To show these lemmas, we need some preparation.
Let $C_{M,N}^1$ be the segment from
$-(M+1/2)\omega_1+(N+1/2)\omega_2$ 
to
$(M+1/2)\omega_1+(N+1/2)\omega_2$
and $C_{M,N}^2$, from
$(M+1/2)\omega_1+(N+1/2)\omega_2$ 
to
$(M+1/2)\omega_1-(N+1/2)\omega_2$.

Let $C_N^1$ be the line from
$-\infty\omega_1+(N+1/2)\omega_2$ 
to
$\infty\omega_1+(N+1/2)\omega_2$
and
 $C_M^2$, from
$(M+1/2)\omega_1-\infty\omega_2$ 
to
$(M+1/2)\omega_1+\infty\omega_2$.
\begin{lemma}
\label{lm:bound_G}
  Let $0<\alpha<1$. 
  Then there exist positive constants $c,\delta$, depending only on $\alpha$,
such that
  for all
 $\xi\in\bigcup_{M,N\in\mathbb{N}}C_{M,N}$,
  \begin{equation}
    \biggl\lvert
    \frac{e^{2\pi i\xi \alpha/\omega_2}}{e^{2\pi i\xi/\omega_2}-1}
    \biggr\rvert
    \leq ce^{-\delta\abs{\Re (i\xi/\omega_2)}}.
  \end{equation}
\end{lemma}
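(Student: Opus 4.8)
The plan is to set $u=\Re(i\xi/\omega_2)$ and $v=\Im(i\xi/\omega_2)$, so that $2\pi i\xi/\omega_2 = -2\pi v + 2\pi i u$ and hence $\lvert e^{2\pi i\xi\alpha/\omega_2}\rvert = e^{-2\pi\alpha v}$ while $\lvert e^{2\pi i\xi/\omega_2}-1\rvert$ depends on $v$ through $e^{-2\pi v}$. The quantity to estimate is therefore
\begin{equation*}
  \biggl\lvert\frac{e^{2\pi i\xi\alpha/\omega_2}}{e^{2\pi i\xi/\omega_2}-1}\biggr\rvert
  = \frac{e^{-2\pi\alpha v}}{\lvert e^{-2\pi v}e^{2\pi i u}-1\rvert},
\end{equation*}
and the claimed bound reads $\leq c\,e^{-\delta\lvert u\rvert}$. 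I would split into two regimes according to the sign of $u$ (equivalently, whether $\xi/\omega_2$ lies in the upper or lower half-plane relative to the real axis).

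First I would treat the region $\Re(i\xi/\omega_2)\to +\infty$, i.e. $u$ large positive, equivalently $\Im(\xi/\omega_2)$ large negative so $v\to+\infty$; here $e^{-2\pi v}$ is small, so $\lvert e^{-2\pi v}e^{2\pi i u}-1\rvert\geq 1-e^{-2\pi v}\geq c_1>0$ once $v$ is bounded below, and the numerator is $e^{-2\pi\alpha v}\leq e^{-2\pi\alpha v}$ which decays; since on $C_{M,N}$ the relation between $u$ and $v$ along the relevant edges is linear with positive slope (the parallelogram edges have $\Im(i\xi/\omega_2)$ comparable to $\lvert\Re(i\xi/\omega_2)\rvert$ up to the fixed constant governed by $\tau$ and the offset $\pm 1/2$), one gets $v\geq c_2\lvert u\rvert - c_3$, giving the desired exponential decay with $\delta$ proportional to $\alpha$. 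Symmetrically, for $\Re(i\xi/\omega_2)\to -\infty$, i.e. $v\to -\infty$, I would factor: $e^{-2\pi v}e^{2\pi i u}-1 = -e^{-2\pi v}e^{2\pi i u}(1-e^{2\pi v}e^{-2\pi i u})$, so the modulus of the denominator is $e^{-2\pi v}\lvert 1-e^{2\pi v}e^{-2\pi i u}\rvert\geq e^{-2\pi v}(1-e^{2\pi v})\geq c_4 e^{-2\pi v}$ for $v$ bounded above; then the whole quotient is $\leq c_4^{-1}e^{-2\pi\alpha v}e^{2\pi v}=c_4^{-1}e^{2\pi(1-\alpha)v}$, and since $v\to -\infty$ with $v\leq -c_2\lvert u\rvert + c_3$ on the relevant edges, this again decays like $e^{-\delta\lvert u\rvert}$ with $\delta$ proportional to $1-\alpha$. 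Taking $\delta$ the minimum of the two yields a single constant depending only on $\alpha$.

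The one point requiring genuine care — and what I expect to be the main obstacle — is the transitional region where $\Re(i\xi/\omega_2)$ is bounded, i.e. $\xi/\omega_2$ stays within a bounded horizontal strip of the real axis: there $e^{2\pi i\xi/\omega_2}-1$ can be small if $\xi/\omega_2$ is near an integer. This is precisely where the half-integer offset in the vertices of $C_{M,N}$ is essential. On $C_{M,N}$ one always has $\xi = \pm(M+1/2)\omega_1 \pm(N+1/2)\omega_2$-type coordinates, so $\xi/\omega_2$ stays a bounded distance away from $\mathbb{Z}+\tau\mathbb{Z}$ projected appropriately; concretely $e^{2\pi i\xi/\omega_2}$ stays a fixed positive distance $\geq c_5>0$ away from $1$ on the union of all contours. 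I would prove this uniform separation as a short sublemma (using that on the horizontal edges $C^1_{M,N}$ the imaginary part of $\xi/\omega_2$ is pinned at $(N+1/2)$ times a quantity with nonzero imaginary part coming from $\Im\tau>0$... actually more simply, $\Re(\xi/\omega_2)$ runs over $\mathbb{R}$ but $\Im(\xi/\omega_2)$ is bounded away from $0$ on the vertical edges by the half-integer offset, and on the horizontal edges $\xi/\omega_2$ has large imaginary part), from which the bound in the transitional region is immediate, with numerator bounded by a constant. Splicing the three regimes together and taking the worst constants gives $c$ and $\delta$ depending only on $\alpha$, completing the proof.
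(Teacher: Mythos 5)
Your overall strategy --- exponential decay of the quotient in the two regimes where $\Im(\xi/\omega_2)\to\pm\infty$, plus uniform boundedness in the transitional strip coming from the half-integer offsets of $C_{M,N}$ --- is the right one, and it is essentially the paper's own (one-line) proof: periodicity in $\xi$ with period $\omega_2$ together with exponential decay along the horizontal lines $C_N^1$. However, your coordinate bookkeeping is inconsistent and it leads you into a step that is false. You declare $u=\Re(i\xi/\omega_2)$ and $v=\Im(i\xi/\omega_2)$, yet the identity $2\pi i\xi/\omega_2=-2\pi v+2\pi iu$ holds for $u=\Re(\xi/\omega_2)$, $v=\Im(\xi/\omega_2)$; with that (evidently intended) convention one has $\Re(i\xi/\omega_2)=-v$, so the target bound is $c\,e^{-\delta\lvert v\rvert}$, not $c\,e^{-\delta\lvert u\rvert}$. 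Your two-regime estimates already yield exactly $e^{-2\pi\alpha\lvert v\rvert}$ and $e^{-2\pi(1-\alpha)\lvert v\rvert}$, i.e.\ decay in the correct variable, so the subsequent appeal to a linear relation ``$v\geq c_2\lvert u\rvert-c_3$ on the relevant edges'' is both unnecessary and false: writing $\xi=a\omega_1+b\omega_2$, one has $v=\Im(\xi/\omega_2)=-a\,\Im\tau/\lvert\tau\rvert^2$ and $u=\Re(\xi/\omega_2)=a\,\Re(1/\tau)+b$, so on a horizontal edge near $a=0$ the quantity $v$ is near $0$ while $\lvert u\rvert$ is about $N+1/2$, which is arbitrarily large.

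The uniform separation you propose as a sublemma (that $e^{2\pi i\xi/\omega_2}$ stays a fixed positive distance from $1$ on the contours in the transitional strip) is correct and is indeed the crux, but your sketched justification for the horizontal edges is wrong: on $C_{M,N}^1$ the quantity pinned at $\pm(N+1/2)$ is the coefficient of $\omega_2$, and since $\omega_2/\omega_2=1$ is real this contributes to the \emph{real} part of $\xi/\omega_2$, not to the imaginary part; in the transitional region (where $a$ is bounded, in particular near $a=0$) the imaginary part of $\xi/\omega_2$ is small there, not large. The correct reason the denominator survives on a horizontal edge is that $\xi/\omega_2=a/\tau\pm(N+1/2)$ stays a fixed distance from $\mathbb{Z}$, because $\pm(N+1/2)-n$ is a half-integer for every $n\in\mathbb{Z}$ while $\Im(a/\tau)=-a\,\Im\tau/\lvert\tau\rvert^2$ controls the case $\lvert a\rvert$ bounded away from $0$; on the vertical edges one instead uses $\lvert\Im(\xi/\omega_2)\rvert=(M+1/2)\Im\tau/\lvert\tau\rvert^2\geq\tfrac{1}{2}\Im\tau/\lvert\tau\rvert^2$. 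With that correction, and the periodicity $\xi\mapsto\xi+\omega_2$ to reduce the strip to a compact fundamental piece, your argument closes and agrees with the paper's.
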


\begin{proof}
This is because the left-hand side is periodic in $\xi$ with period $\omega_2$,
while, since $0<\alpha<1$, the left-hand side is of exponential decay when $\xi$
moves along $C_N^1$.
\end{proof}
 
\begin{lemma}
  \label{lm:Phi}
  For $\alpha\in\mathbb{R}\setminus\mathbb{Z}$ and $\beta\in\mathbb{C}\setminus\mathbb{Z}$,
  \begin{equation}
    \label{eq:lim1}
    \begin{split}
      \lim_{N\to\infty}
      \sum_{n=-N}^N 
      \frac{e^{2\pi in \alpha}}{\beta+n}
      =
      2 \pi i 
      \frac{e^{2 \pi i  \beta (1-\{\alpha\})}}{e^{2 \pi i \beta }-1}
      (=\Phi(\alpha,\beta),{\rm say}).
    \end{split}
  \end{equation}
\end{lemma}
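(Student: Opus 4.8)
The plan is to prove Lemma \ref{lm:Phi} by recognizing the left-hand side as a partial-fraction type expansion and evaluating it via residues. First I would observe that the series
$\sum_{n=-N}^N e^{2\pi i n\alpha}/(\beta+n)$ converges (as a symmetric limit) since, by Abel summation, $\sum_{n} e^{2\pi i n\alpha}$ has bounded partial sums when $\alpha\notin\mathbb{Z}$, while $1/(\beta+n)$ is monotone and tends to $0$; hence the limit exists. The natural way to obtain the closed form is to introduce the meromorphic function
\begin{equation*}
  g(w)=\frac{\pi\,e^{2\pi i w\{\alpha\}}}{\sin\pi w}\cdot\frac{1}{\beta+w},
\end{equation*}
or rather to use the kernel $\pi\cot\pi w$ suitably twisted, and integrate over the boundary of a large rectangle or a square $|w|=N+1/2$ centered at the origin. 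The poles of the integrand are at $w=-n$ for $n\in\mathbb{Z}$, with residue proportional to $e^{-2\pi i n\{\alpha\}}/(\beta-n)$ (matching the sum after reindexing), and at $w=-\beta$, whose residue produces the right-hand side $2\pi i\, e^{2\pi i\beta(1-\{\alpha\})}/(e^{2\pi i\beta}-1)$.

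The key steps, in order, would be: (i) fix the determination $\{\alpha\}\in[0,1)$ so that the twisting factor $e^{2\pi i w\{\alpha\}}$ has controlled growth along horizontal lines; (ii) choose contours $\Gamma_N$ — most conveniently the rectangles with vertical sides $\Re w=\pm(N+1/2)$ and horizontal sides $\Im w=\pm T$ with $T\to\infty$ taken first, exactly as in the proof of Lemma \ref{lm:bound_G}, using that $e^{2\pi i w\alpha}/(e^{2\pi i w}-1)$ decays exponentially as $|\Im w|\to\infty$ when $0<\{\alpha\}<1$ — so that the horizontal parts vanish; (iii) on the vertical sides the integrand is $O(1/N)$ (the exponential $e^{2\pi i w\{\alpha\}}$ stays bounded on $\Re w=\pm(N+1/2)$ since there $|e^{2\pi i w}-1|$ is bounded below), so the whole contour integral tends to $0$; (iv) apply the residue theorem: the sum of residues at the integer points $w=0,-1,\dots,-N$ and $w=1,\dots,N$ recovers, after the substitution $n\mapsto -n$ and using $e^{2\pi i w}-1$ in place of $\sin\pi w$, precisely $\sum_{n=-N}^N e^{2\pi i n\alpha}/(\beta+n)$ up to a known constant, and the extra residue at $w=-\beta$ contributes $\Phi(\alpha,\beta)$; (v) let $N\to\infty$ and rearrange.

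Actually, the cleanest route avoids rectangles: use the standard expansion
\begin{equation*}
  \frac{2\pi i\, e^{2\pi i\beta(1-\{\alpha\})}}{e^{2\pi i\beta}-1}
  =\sum_{n\in\mathbb{Z}}\frac{e^{2\pi i n\alpha}}{\beta+n},
\end{equation*}
which is nothing but the Fourier expansion on $[0,1)$ of the function $\alpha\mapsto 2\pi i\,e^{2\pi i\beta(1-\alpha)}/(e^{2\pi i\beta}-1)$ — equivalently the classical formula $\mathfrak{F}(\xi;z;\omega_2)=\sum_n e^{\cdots}/(\cdots)$ already implicit in \eqref{eq:def_G}, \eqref{bernoulli}, since $\mathfrak{F}$ with a shift is exactly the generating series whose Fourier coefficients are $1/(\beta+n)$. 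So I would: compute the Fourier coefficients $\int_0^1 e^{-2\pi i n\alpha} h(\alpha)\,d\alpha$ of $h(\alpha)=2\pi i\,e^{2\pi i\beta(1-\alpha)}/(e^{2\pi i\beta}-1)$, check they equal $1/(\beta+n)$ by an elementary integration, and then invoke the fact that the symmetric partial sums of a Fourier series of a function of bounded variation converge pointwise to the midpoint value — here $h$ is smooth on $(0,1)$ and $\alpha\notin\mathbb{Z}$, so the symmetric limit equals $h(\{\alpha\})$, giving \eqref{eq:lim1}.

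The main obstacle is the \emph{conditional} (symmetric) nature of the convergence: the series is not absolutely convergent, so one cannot simply rearrange, and one must be careful that the limit is taken as $\lim_{N\to\infty}\sum_{n=-N}^N$ rather than in some other order. Whichever route is chosen — contour integration with the horizontal sides pushed to infinity first, or the Dirichlet/Jordan pointwise-convergence theorem for Fourier series of BV functions — the delicate point is justifying the exchange of limits and controlling the tail uniformly in $\beta$ on compact subsets away from $\mathbb{Z}$. The contour proof handles this transparently because the decay along $C_N^1$ is exactly the estimate supplied by Lemma \ref{lm:bound_G}; I expect to lean on that lemma (with $\alpha$ there playing the role of $\{\alpha\}$ here) to make the argument short.
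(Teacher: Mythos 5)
Your proposal is correct, and it actually contains two complete routes. The contour route you outline in steps (i)--(v) is, up to a $90^\circ$ rotation of the complex plane, exactly the paper's own proof: the paper integrates $(\xi+i\beta)^{-1}e^{2\pi\xi\alpha}/(e^{2\pi\xi}-1)$ over the squares $C_{M,N}$ (taking $(\omega_1,\omega_2)=(1,i)$), sends $M\to\infty$ first so that the contour degenerates to the two lines $C_N^1$ and $C_{-N-1}^1$, identifies the difference of the two line integrals with the residue sum $\sum_{n=-N}^N e^{2\pi in\alpha}/(\beta+n)+2\pi i\,e^{-2\pi i\beta\alpha}/(e^{-2\pi i\beta}-1)$, and then lets $N\to\infty$ using the decay from Lemma \ref{lm:bound_G} --- precisely your plan with the roles of the real and imaginary directions interchanged. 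Two small imprecisions in that route: the kernel $\pi/\sin\pi w$ you first mention would introduce spurious factors $(-1)^n$ at the integer poles (you correctly switch to $e^{2\pi iw}-1$), and on the vertical sides it is not the factor $e^{2\pi iw\{\alpha\}}$ alone that stays bounded (it grows as $\Im w\to-\infty$) but the full quotient $e^{2\pi iw\{\alpha\}}/(e^{2\pi iw}-1)$ that decays exponentially in $|\Im w|$; that decay is exactly what Lemma \ref{lm:bound_G} supplies. Your preferred second route --- verifying $\int_0^1 e^{-2\pi in\alpha}h(\alpha)\,d\alpha=1/(\beta+n)$ for $h(\alpha)=2\pi i\,e^{2\pi i\beta(1-\alpha)}/(e^{2\pi i\beta}-1)$ and invoking the Dirichlet--Jordan theorem for symmetric partial sums of the Fourier series of a function of bounded variation --- is genuinely different from the paper's argument and equally valid: it is more elementary (no contours, no two-parameter limit), and it makes transparent why $\alpha\in\mathbb{Z}$ must be excluded, since the periodic extension of $h$ jumps at the integers and the symmetric sums would there converge only to the midpoint value. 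For the lemma as stated, pointwise convergence is all that is needed, so either route suffices; the contour machinery the paper sets up here is chosen because the same estimates are reused throughout Section \ref{sec-3-2} (compare Lemma \ref{lm:ps}), which the Fourier argument would not provide.
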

\begin{proof}
  In this proof, we temporarily set $(\omega_1,\omega_2)=(1,i)$.
  Assume $0<\alpha<1$. Let
  \begin{equation}
    f(\xi)=\frac{e^{2 \pi \xi\alpha}}{e^{2 \pi \xi }-1}.
  \end{equation}
  Then by Lemma \ref{lm:bound_G}, we see that
  there exist $c,\delta>0$ such that
  \begin{equation}
    \abs{f(\xi)}    \leq ce^{-\delta\abs{\Re \xi}}
  \end{equation}
for all
 $\xi\in\bigcup_{M,N\in\mathbb{N}}C_{M,N}$.
Putting $\xi=a+(N+1/2)i$ on $C_N^1$, for all $N$ with sufficiently large $\abs{N}$,
  we obtain
  \begin{equation}
  \abs{(\xi+i\beta)^{-1}f(\xi)}
  \leq
  c
  \abs{N+1/2+\Re\beta}^{-1}
  e^{-\delta\abs{a}}
  \leq
  e^{-\delta\abs{a}}
\end{equation}
and
\begin{equation}
  \int_{-\infty}^{\infty}e^{-\delta\abs{a}}
  da<\infty.
\end{equation}
Hence 
\begin{align}
  \lim_{M\to\infty}
  \int_{C_{M,N}^1}(\xi+i\beta)^{-1}f(\xi)d\xi
  &=
  \int_{C_N^1}(\xi+i\beta)^{-1}f(\xi)d\xi,
\end{align}
\begin{align}\label{koremotsuika}
  \lim_{N\to\pm\infty} \int_{C_N^1}(\xi+i\beta)^{-1}f(\xi)d\xi
  &=0.
\end{align}
Setting $\xi=a+bi$ on $C_{M,N}^2$ with $a=M+1/2$,
for all $M$ with sufficiently large $\abs{M}$, we have
\begin{equation}
  \begin{split}
  \biggl\lvert
  \int_{C_{M,N}^2}
  (\xi+i\beta)^{-1}f(\xi)d\xi
  \biggr\rvert
  &\leq
  ce^{-\delta\abs{a}}
  \int_{-(N+1/2)}^{N+1/2}\abs{(a+bi+\beta i)^{-1}}
  db
  \\
  &\leq
  ce^{-\delta\abs{a}}
  \abs{a-\Im\beta}^{-1}
  (2N+1),
\end{split}
\end{equation}
which implies that
\begin{equation}
  \lim_{M\to\infty}
  \int_{C_{M,N}^2}
  (\xi+i\beta)^{-1}f(\xi)d\xi
  =0.
\end{equation}
Hence for a sufficiently large $N>0$, we have 
\begin{equation}\label{koremokoremo}
  \begin{split}
    \lim_{M\to\infty}  \int_{C_{M,N}}(\xi+i\beta)^{-1}f(\xi)d\xi
    &=
    \biggl(
    \int_{C_N^1}-
    \int_{C_{-N-1}^1}
    \biggr)
(\xi+i\beta)^{-1}f(\xi)d\xi
\\
    &=
    \sum_{n=-N}^N
    \frac{e^{2\pi in \alpha}}{\beta+n}
    +
    2 \pi i 
    \frac{e^{-2 \pi i \beta\alpha}}{e^{-2 \pi i \beta }-1},
  \end{split}
\end{equation}
where the second equality follows by counting the residues of the poles between
$C_N^1$ and $C_{-N-1}^1$.   But \eqref{koremotsuika} implies that the second member
of \eqref{koremokoremo} tends to 0 as $N\to\infty$.
Hence the third member also tends to 0, which implies
\eqref{eq:lim1}.


\end{proof}

\begin{lemma}
\label{lm:ps}
Let $\alpha\in\mathbb{R}\setminus\mathbb{Z}$. Then
there exists $c=c(\alpha)>0$ such that 
\begin{equation}
    \biggl\lvert
    \sum_{n=-N}^N  \frac{e^{2\pi i n\alpha}}{\beta+n}
    \biggr\rvert
    \leq \frac{c}{\abs{\Im\beta}}
\end{equation}
for all $N\in\mathbb{N}$ and $\beta\in\mathbb{C}\setminus\mathbb{R}$.
\end{lemma}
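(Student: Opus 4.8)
The plan is to estimate the sum by summation by parts, combining the uniform boundedness of the partial character sums $\sum_k e^{2\pi ik\alpha}$ with a \emph{sharp} bound for the total variation of the sequence $n\mapsto(\beta+n)^{-1}$. First I would set $a_n=e^{2\pi in\alpha}$, $b_n=(\beta+n)^{-1}$, and $S_n=\sum_{k=-N}^n a_k$ (with $S_{-N-1}=0$). Abel summation gives
\[\sum_{n=-N}^N\frac{e^{2\pi in\alpha}}{\beta+n}=S_Nb_N+\sum_{n=-N}^{N-1}S_n(b_n-b_{n+1}),\]
so that $\bigl\lvert\sum_{n=-N}^N\tfrac{e^{2\pi in\alpha}}{\beta+n}\bigr\rvert\leq(\sup_n\lvert S_n\rvert)\bigl(\lvert b_N\rvert+\sum_{n=-N}^{N-1}\lvert b_n-b_{n+1}\rvert\bigr)$. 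Since $\alpha\notin\mathbb{Z}$, summing the finite geometric series gives $\lvert S_n\rvert\leq 2/\lvert e^{2\pi i\alpha}-1\rvert$ for all $n$, uniformly in $N$; and $\lvert b_N\rvert=\lvert\beta+N\rvert^{-1}\leq\lvert\Im\beta\rvert^{-1}$. Everything then reduces to bounding the total variation $V:=\sum_{n=-N}^{N-1}\lvert b_n-b_{n+1}\rvert$.

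The point I expect to be the main obstacle is to show $V\leq\pi/\lvert\Im\beta\rvert$. The crude termwise estimate $\lvert b_n-b_{n+1}\rvert=\lvert\beta+n\rvert^{-1}\lvert\beta+n+1\rvert^{-1}\leq\tfrac12(\lvert\beta+n\rvert^{-2}+\lvert\beta+n+1\rvert^{-2})$ is useless here, since $\sum_{n\in\mathbb{Z}}\lvert\beta+n\rvert^{-2}$ is of order $\lvert\Im\beta\rvert^{-2}$ when $\Re\beta$ lies near an integer. Instead I would argue geometrically: the Möbius map $w\mapsto(\beta+w)^{-1}$ carries the real line onto a circle through the origin, and writing $\beta=u+iv$ and eliminating $u+t$ one checks that this circle has centre $-i/(2v)$ and radius $1/(2\lvert v\rvert)$, hence circumference $\pi/\lvert v\rvert$. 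The map $t\mapsto(\beta+t)^{-1}$ is injective and continuous on $[-N,N]$, so it parametrizes an arc of that circle along which the points $b_{-N},b_{-N+1},\dots,b_N$ occur in order; consequently the inscribed polygonal path has length at most the arc length, and a fortiori at most the full circumference:
\[\sum_{n=-N}^{N-1}\lvert b_n-b_{n+1}\rvert\leq\frac{\pi}{\lvert\Im\beta\rvert}.\]

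Combining the three estimates yields the claim with $c(\alpha)=2(1+\pi)/\lvert e^{2\pi i\alpha}-1\rvert$, uniformly in $N\in\mathbb{N}$ and $\beta\in\mathbb{C}\setminus\mathbb{R}$. Apart from the total-variation bound, where the geometric picture (a polygon inscribed in an arc of a circle of circumference $\pi/\lvert\Im\beta\rvert$) is essential and no purely termwise estimate suffices, the argument is routine.
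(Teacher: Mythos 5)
Your proof is correct and follows essentially the same route as the paper: Abel summation, the uniform geometric-series bound $\lvert S_n\rvert\le 2/\lvert e^{2\pi i\alpha}-1\rvert$, and a bound of order $\lvert\Im\beta\rvert^{-1}$ on the total variation of $n\mapsto(\beta+n)^{-1}$. Your Möbius-circle argument is just the arc-length form of the paper's computation, since $\sum_n\lvert b_n-b_{n+1}\rvert\le\sum_n\int_n^{n+1}\lvert\beta+\xi\rvert^{-2}\,d\xi$ and the paper bounds exactly this integral by $\int_{-\infty}^{\infty}\bigl(\xi^2+(\Im\beta)^2\bigr)^{-1}d\xi=\pi/\lvert\Im\beta\rvert$, which is the circumference $\pi/\lvert\Im\beta\rvert$ of your circle.
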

\begin{proof}
We have
\begin{equation}
  \begin{split}
    \sum_{n=-N}^N  \frac{e^{2\pi in\alpha}}{\beta+n}
    &=
    \biggl(\sum_{n=-N}^N  e^{2\pi in\alpha}\biggr)\frac{1}{\beta+N}
    -\int_{-N}^N
    \biggl(
    \sum_{n=-N}^{[\xi]}  e^{2\pi in\alpha}
    \biggr)
    \frac{-d\xi}{(\beta+\xi)^{2}}
    \\
    &=\frac{e^{-2\pi iN\alpha}(1-e^{2\pi i\alpha (2N+1)})}{1-e^{2\pi i \alpha}}
    \frac{1}{\beta+N}
    \\
    &\qquad\qquad
    -\int_{-N}^N
    \frac{e^{-2\pi iN\alpha}(1-e^{2\pi i\alpha ([\xi]+N+1)})}{1-e^{2\pi i \alpha}}
    \frac{-d\xi}{(\beta+\xi)^2}.
  \end{split}
\end{equation}
Hence
\begin{equation}
  \begin{split}
    \biggl\lvert
    \sum_{n=-N}^N  \frac{e^{2\pi in\alpha}}{\beta+n}
    \biggr\rvert
    &\leq\frac{2}{\abs{1-e^{2\pi i \alpha}}}
    \biggl(
    \frac{1}{\abs{\beta+N}}
    +
    \int_{-N}^N
    \frac{d\xi}{\abs{\beta+\xi}^2}
    \biggr)\\
    &\leq\frac{2}{\abs{1-e^{2\pi i \alpha}}}
    \biggl(
    \frac{1}{\abs{\Im\beta}}
    +
    \int_{-\infty}^\infty
    \frac{d\xi}{\abs{i \Im\beta+\xi}^2}
    \biggr)
    \\
    &=\frac{2}{\abs{1-e^{2\pi i \alpha}}}
    \biggl(
    \frac{1}{\abs{\Im\beta}}
    +
    \frac{1}{\abs{\Im\beta}}
    \int_{-\infty}^\infty
    \frac{d\xi'}{\abs{i+\xi'}^2}
    \biggr)
    \\
    &\leq \frac{c}{\abs{\Im\beta}}
  \end{split}
\end{equation}
for some $c>0$.
\end{proof}

\begin{lemma}
  \label{lm:est}
  There exists $c>0$, depending on $\omega_1,\omega_2$, such that 
  \begin{equation}
    \abs{\xi}>c(a^2+b^2)^{1/2}
  \end{equation}
for all $\xi=a\omega_1+b\omega_2\in \mathbb{C}\setminus\{0\}$.
\end{lemma}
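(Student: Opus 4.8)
The plan is to reduce the estimate to the compactness of the unit circle together with the $\mathbb{R}$-linear independence of $\omega_1$ and $\omega_2$. Since $\Im(\omega_2/\omega_1)>0$, in particular $\omega_2/\omega_1\notin\mathbb{R}$, so $\omega_1$ and $\omega_2$ are linearly independent over $\mathbb{R}$; equivalently the real-linear map $\mathbb{R}^2\to\mathbb{C}$, $(a,b)\mapsto a\omega_1+b\omega_2$, is injective, so $a\omega_1+b\omega_2\neq 0$ whenever $(a,b)\neq(0,0)$. This is the only place the hypothesis on $\tau$ enters.

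First I would introduce the continuous function $f(a,b)=\abs{a\omega_1+b\omega_2}$ on the unit circle $S=\{(a,b)\in\mathbb{R}^2 : a^2+b^2=1\}$. By the observation above, $f>0$ everywhere on $S$, and since $S$ is compact, $f$ attains a minimum $c_0>0$ on it. Then for an arbitrary $\xi=a\omega_1+b\omega_2$ with $(a,b)\neq(0,0)$, put $\rho=(a^2+b^2)^{1/2}>0$; by the real-homogeneity $f(\lambda a,\lambda b)=\abs\lambda f(a,b)$ we obtain $\abs{\xi}=\rho\,f(a/\rho,b/\rho)\geq c_0\rho=c_0(a^2+b^2)^{1/2}$. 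Finally, choosing $c=c_0/2$ (or any constant strictly smaller than $c_0$) yields the desired strict inequality $\abs{\xi}>c(a^2+b^2)^{1/2}$, and $c$ depends only on $\omega_1,\omega_2$.

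I do not expect any genuine obstacle here: the argument is a routine compactness/homogeneity estimate for a nondegenerate quadratic form. If an explicit value of the constant were wanted, one could instead diagonalize $Q(a,b)=\abs{a\omega_1+b\omega_2}^2=\abs{\omega_1}^2a^2+2\Re(\omega_1\overline{\omega_2})\,ab+\abs{\omega_2}^2b^2$ and take $c_0^2$ to be its smallest eigenvalue; this is positive precisely because the discriminant $\abs{\omega_1}^2\abs{\omega_2}^2-(\Re(\omega_1\overline{\omega_2}))^2=(\Im(\omega_1\overline{\omega_2}))^2$ is positive, which is again exactly the $\mathbb{R}$-linear independence of $\omega_1,\omega_2$. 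The compactness argument is cleaner, so I would present that version.
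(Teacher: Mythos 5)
Your argument is correct: the paper states this lemma without proof (it is treated as evident), and your compactness-plus-homogeneity argument, with the positivity of $f$ on the unit circle coming from the $\mathbb{R}$-linear independence of $\omega_1,\omega_2$ (i.e.\ $\Im(\omega_2/\omega_1)>0$), is exactly the standard way to justify it. Your care in taking $c=c_0/2$ to get the strict inequality, and the alternative explicit bound via the discriminant $(\Im(\omega_1\overline{\omega_2}))^2>0$, are both fine.
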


\begin{proof}[Proof of Lemma \ref{lm:bound1}]
  We first show that on $\bigcup_{M,N\in\mathbb{N}}C_{M,N}$,
  \begin{equation}
    \abs{\mathfrak{E}(\xi)\mathfrak{F}(\xi)^r} \leq c_0
  \end{equation}
  for some $c_0>0$.  In fact, the boundedness of $\mathfrak{E}(\xi)$ follows from
\eqref{2-1}, \eqref{2-2}, while the boundedness of $\mathfrak{F}(\xi)$ follows
from Lemma \ref{lm:bound_G} when $0<z<1$, directly when $z=0,1$.
 
Consider the path $C_{M,N}^1$. 
By Lemma \ref{lm:est}, we have
  \begin{equation}
    \abs{\xi^{-k}\mathfrak{E}(\xi)\mathfrak{F}(\xi)^r}
    \leq 
    \frac{c_0c}{(a^2+b^2)^{k/2}}.
  \end{equation}
Since
  \begin{equation}
    \int_{-\infty}^\infty \frac{da}{(a^2+1)^{k/2}}<\infty,
  \end{equation}
we have
  \begin{equation}
    \Lim_{M,N}
    \int_{C_{M,N}^1}
    \xi^{-k}\mathfrak{E}(\xi)\mathfrak{F}(\xi)^rd\xi
    =0.
  \end{equation}
This together with similar estimations for the rest of the path $C_{M,N}$
implies \eqref{eq:main_lim}.
\end{proof}

\begin{proof}[Proof of Lemma \ref{lm:bound2}]
  First we consider the path $C_{M,N}^1$.
  For $\xi=a\omega_1+b\omega_2$ with $a\in\mathbb{R}$ and $b=N+1/2>1$,
  \begin{equation}
    \label{eq:b2}
    \abs{\xi^{-1}\mathfrak{E}(\xi)\mathfrak{F}(\xi)^r}
    \leq c_1
    \frac{e^{-\abs{a} r\delta\Re (i/\tau)}}{(a^2+b^2)^{1/2}}
  \end{equation}
for some $c_1>0$
by Lemmas \ref{lm:bound_G} (which can be applied because $0<z<1$) and \ref{lm:est}, 
which implies
  \begin{equation}
    \Lim_{M,N}
    \int_{C_{M,N}^1}
    \xi^{-1}\mathfrak{E}(\xi)\mathfrak{F}(\xi)^rd\xi
    =0.
  \end{equation}
  
  Next we consider the path $C_{M,N}^2$.
  We have, using \eqref{2-2} and \eqref{eq:perio}, 
  \begin{multline}\label{4-24}
    \int_{C_{M,N}^2}
    \xi^{-1}\mathfrak{E}(\xi)\mathfrak{F}(\xi)^rd\xi
    \\
    \begin{aligned}
      &= 
      \int_{-(N+1/2)}^{N+1/2}
      (a\omega_1+b\omega_2)^{-1}
      \mathfrak{E}(a\omega_1+b\omega_2)
      \mathfrak{F}(a\omega_1+b\omega_2)^r
      (-\omega_2db)
      \\
      &= 
      \sum_{n=-N}^N
      \int_{-1/2}^{1/2}
      (a\omega_1+(b+n)\omega_2)^{-1}
      \mathfrak{E}(a\omega_1+b\omega_2)
      \mathfrak{F}(a\omega_1+b\omega_2)^r
      e^{2\pi i(y+rz)n}
      (-\omega_2db)
      \\
      &= 
      \int_{-1/2}^{1/2}
      \mathfrak{E}(a\omega_1+b\omega_2)
      \mathfrak{F}(a\omega_1+b\omega_2)^r
      \sum_{n=-N}^N
      \frac{e^{2\pi i(y+rz)n}}
      {(a\omega_1+(b+n)\omega_2)}
      (-\omega_2db),
    \end{aligned}
  \end{multline}
  where $a=M+1/2$. Let $h(M,N,b)$ be the integrand of the last member.
  Then
  Lemmas \ref{lm:bound_G} and \ref{lm:ps} imply
  \begin{equation}
    \abs{h(M,N,b)}\leq \frac{c_2    e^{-\abs{a} r\delta\Re (i/\tau)}}{\abs{a\Im \tau^{-1}}}
  \end{equation}
  for some $c_2>0$, which implies
  \begin{equation}
    \lim_{\substack{M\to\infty\\N\to\infty}}
    \int_{C_{M,N}^2}
    \xi^{-1}\mathfrak{E}(\xi)\mathfrak{F}(\xi)^rd\xi
    =
    \lim_{N\to\infty}\lim_{M\to\infty}
    \int_{C_{M,N}^2}
    \xi^{-1}\mathfrak{E}(\xi)\mathfrak{F}(\xi)^rd\xi=0.
  \end{equation}
On the other hand, by Lemma \ref{lm:Phi} and \eqref{4-24} we have
  \begin{multline}
    \lim_{N\to\infty}
    \int_{C_{M,N}^2}
    \xi^{-1}\mathfrak{E}(\xi)\mathfrak{F}(\xi)^rd\xi
    \\
    =
    \int_{-1/2}^{1/2}
    \mathfrak{E}(a\omega_1+b\omega_2)
    \mathfrak{F}(a\omega_1+b\omega_2)^r
    \Phi(y+rz,a/\tau+b)
    (-db).
  \end{multline}
Applying Lemma \ref{lm:bound_G} (available because $y+rz\notin\mathbb{Z}$) to 
$\Phi$, we obtain
  for some $c_3>0$,
  \begin{equation}
    \abs{\mathfrak{E}(a\omega_1+b\omega_2)
      \mathfrak{F}(a\omega_1+b\omega_2)^r
      \Phi(y+rz,a/\tau+b)}
    \leq c_3 e^{-\abs{a} r\delta\Re (i/\tau)}
    e^{-\abs{a} \delta'\Re (i/\tau)},
\end{equation}
which yields
  \begin{equation}
    \lim_{M\to\infty}
    \int_{-1/2}^{1/2}
    \mathfrak{E}(a\omega_1+b\omega_2)
    \mathfrak{F}(a\omega_1+b\omega_2)^r
    \Phi(y+rz,a/\tau+b)
    (-db)=0.
  \end{equation}
  
Similar estimations also hold for the rest of the path $C_{M,N}$, therefore
\eqref{eq:main_lim} follows.
\end{proof}

\begin{lemma}
\label{lm:D00}
Let $(x,y)=(0,0)$.
Then on $\bigcup_{M,N\in\mathbb{N}}C_{M,N}$, we have
\begin{equation}
\label{eq:D00}
  \abs{\mathfrak{E}(\xi)\mathfrak{F}(\xi)^r} \leq 
  \begin{cases}
c_4 e^{-\delta|\Re (i\xi/\omega_2)|}\qquad&(0<z<1) \\
c_4 (1+\abs{\Re (i\xi/\omega_2)})\qquad&(z=0,1) 
\end{cases}
\end{equation}
for some $c_4,\delta>0$.
\end{lemma}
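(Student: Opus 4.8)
The plan is to bound the two factors $\mathfrak{E}(\xi;0,0;\omega_1,\omega_2)$ and $\mathfrak{F}(\xi;z;\omega_2)^r$ on $\bigcup_{M,N\in\mathbb{N}}C_{M,N}$ separately and then multiply. What is new compared with Lemma~\ref{lm:bound1} is that, for $(x,y)=(0,0)$, the first factor is no longer bounded on the contours. By \eqref{eq:pE00-1} and \eqref{eq:pE00-2} it is genuinely periodic in $\xi$ with period $\omega_2$, but only quasi-periodic with period $\omega_1$, gaining the \emph{additive} constant $2\pi i/\omega_2$ at each step. Iterating, an arbitrary point $\xi=a\omega_1+b\omega_2$ of $\bigcup_{M,N}C_{M,N}$ (so $a,b\in\mathbb{R}$ with one of them a half-integer) can be reduced modulo the lattice: writing $a=m+a'$ and $b=n+b'$ with $m,n\in\mathbb{Z}$, $a',b'\in[-1/2,1/2]$, one gets
\[
\mathfrak{E}(\xi;0,0;\omega_1,\omega_2)=\mathfrak{E}(a'\omega_1+b'\omega_2;0,0;\omega_1,\omega_2)+m\,\frac{2\pi i}{\omega_2}.
\]

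First I would bound the reduced term. On $C_{M,N}^1$ the half-integer is $b$, so $b'=\pm1/2$; on $C_{M,N}^2$ the half-integer is $a$, so $a'=\pm1/2$. Hence $a'\omega_1+b'\omega_2$ always lies on the boundary of the fundamental parallelogram $\{u\omega_1+v\omega_2:\abs{u},\abs{v}\le1/2\}$, a compact set avoiding $\omega_1\mathbb{Z}+\omega_2\mathbb{Z}$, so $\mathfrak{E}(\,\cdot\,;0,0;\omega_1,\omega_2)$ is bounded there by a constant depending only on $\omega_1,\omega_2$. Next, since $b$ is real, $\Re(i\xi/\omega_2)=\Re(ia/\tau)=a\,\Im\tau/\abs{\tau}^2$ on $C_{M,N}^1$, and equals $(M+1/2)\,\Im\tau/\abs{\tau}^2$ on $C_{M,N}^2$ (where $a=M+1/2$); in both cases $\abs{m}\le\abs{a}+1\ll 1+\abs{\Re(i\xi/\omega_2)}$, uniformly in $M,N$. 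The remaining two sides of $C_{M,N}$ are reduced to these by the symmetries $\mathfrak{E}(-\xi;0,0;\omega_1,\omega_2)=-\mathfrak{E}(\xi;0,0;\omega_1,\omega_2)$ (from \eqref{eq:Fz1}) and $\mathfrak{F}(-\xi;z;\omega_2)=-\mathfrak{F}(\xi;1-z;\omega_2)$ (from \eqref{eq:Gz1}), noting that $z\mapsto1-z$ keeps us in the same one of the cases $0<z<1$ and $z\in\{0,1\}$. This yields
\[
\bigl\lvert\mathfrak{E}(\xi;0,0;\omega_1,\omega_2)\bigr\rvert\le c\bigl(1+\abs{\Re(i\xi/\omega_2)}\bigr)\qquad\Bigl(\xi\in\textstyle\bigcup_{M,N}C_{M,N}\Bigr).
\]

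For the second factor I would argue as in the proof of Lemma~\ref{lm:bound1}. When $0<z<1$, Lemma~\ref{lm:bound_G} with $\alpha=z$ gives $\delta_0>0$ and $c'>0$ with $\abs{\mathfrak{F}(\xi;z;\omega_2)^r}\le c'e^{-r\delta_0\abs{\Re(i\xi/\omega_2)}}$ on the contours. When $z=0$ or $z=1$, using $\mathfrak{F}(\xi;1;\omega_2)=\mathfrak{F}(\xi;0;\omega_2)+2\pi i/\omega_2$ it suffices to treat $\mathfrak{F}(\xi;0;\omega_2)=(2\pi i/\omega_2)(e^{2\pi i\xi/\omega_2}-1)^{-1}$, which is $\omega_2$-periodic, has no pole on $\bigcup_{M,N}C_{M,N}$, and is bounded there (on $C^1_{M,N}$ it reduces mod $\omega_2$ to a bounded function of $a\in\mathbb{R}$, and on $C^2_{M,N}$ one has $\abs{e^{2\pi i\xi/\omega_2}}\ge e^{3\pi\Im\tau/\abs{\tau}^2}>1$, so $\mathfrak{F}$ is uniformly small); hence $\mathfrak{F}(\xi;z;\omega_2)^r$ is bounded on $\bigcup_{M,N}C_{M,N}$. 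Multiplying the two estimates: for $z=0,1$ the product is $\le c_4(1+\abs{\Re(i\xi/\omega_2)})$, which is the assertion; for $0<z<1$ the product is $\le c''(1+\abs{\Re(i\xi/\omega_2)})e^{-r\delta_0\abs{\Re(i\xi/\omega_2)}}$, and choosing any $\delta$ with $0<\delta<r\delta_0$ together with the elementary inequality $(1+t)e^{-r\delta_0 t}\le c_5e^{-\delta t}$ for $t\ge0$ gives $\le c_4 e^{-\delta\abs{\Re(i\xi/\omega_2)}}$.

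The routine ingredients are the two elementary checks for $\mathfrak{F}$ and the absorption of the polynomial factor by the exponential. The hard part will be the uniform linear bound for $\mathfrak{E}(\xi;0,0;\omega_1,\omega_2)$: one has to verify that the number $\abs{m}$ of $\omega_1$-translations needed to bring $\xi$ back to the fundamental parallelogram is controlled by $1+\abs{\Re(i\xi/\omega_2)}$ \emph{uniformly} over all $M,N$ and over all four sides of $C_{M,N}$, and not merely by the (generally much larger) distance of $\xi$ from the origin.
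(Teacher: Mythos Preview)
Your proposal is correct and follows essentially the same approach as the paper: reduce $\xi$ modulo the lattice via \eqref{eq:pE00-1}--\eqref{eq:pE00-2} to obtain the linear bound $\lvert\mathfrak{E}(\xi)\rvert\ll 1+\lvert\Re(i\xi/\omega_2)\rvert$, then combine this with the exponential decay of $\mathfrak{F}^r$ from Lemma~\ref{lm:bound_G} for $0<z<1$ (absorbing the linear factor by shrinking~$\delta$) or its boundedness for $z=0,1$. Your write-up is more explicit than the paper's (you spell out why the reduced point avoids the poles of $\mathfrak{E}$, why $\lvert m\rvert$ is controlled by $\lvert\Re(i\xi/\omega_2)\rvert$, and how the four sides are handled), but the argument is the same.
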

\begin{proof}
For $\xi\in\bigcup_{M,N\in\mathbb{N}}C_{M,N}$, put
$\xi=(a+m)\omega_1+(b+n)\omega_2$ so that $0\leq a,b<1$.
Then for some $c,c'>0$,
\begin{equation}
  \begin{split}
    \abs{\mathfrak{E}(\xi)\mathfrak{F}(\xi)^r}
    &=
    \abs{\mathfrak{E}(a\omega_1+b\omega_2)+2\pi im/\omega_2}
    \abs{\mathfrak{F}(\xi)^r}
    \\
    &\leq (c+c'|\Re (i\xi/\omega_2)|)
    \abs{\mathfrak{F}(\xi)^r}
  \end{split}
\end{equation}
by \eqref{eq:pE00-1} and \eqref{eq:pE00-2}.
If $0<z<1$, then by Lemma \ref{lm:bound_G} and replacing $\delta$ by a slightly small one, we obtain \eqref{eq:D00}.
If $z=0,1$, then $\mathfrak{F}(\xi)$ is bounded and
\eqref{eq:D00} holds.
\end{proof}
\begin{proof}[Proof of Lemma \ref{lm:bound3}]
By Lemma \ref{lm:D00}, if $0<z<1$, we see that
 on $\bigcup_{M,N\in\mathbb{N}}C_{M,N}$, 
\begin{equation}
  \abs{\mathfrak{E}(\xi)\mathfrak{F}(\xi)^r} \leq c_5
\end{equation}
for some $c_5>0$. If $z=0,1$, then 
\begin{equation}
  \abs{\xi^{-1}\mathfrak{E}(\xi)\mathfrak{F}(\xi)^r} \leq c_5
\end{equation}
holds instead. By the same argument as in the proof of Lemma \ref{lm:bound1},
we have \eqref{eq:main_lim} in these cases.
\end{proof}
\begin{proof}[Proof of Lemma \ref{lm:bound4}]
Since the same estimation as
\eqref{eq:b2} holds,
the proof of Lemma \ref{lm:bound2} works well in this case.
\end{proof}

\begin{proof}[Proof of Theorem \ref{Main-theorem}]
  Let
  $k\in\mathbb{N}$.
  Due to the properties of $\mathfrak{E}$ and $\mathfrak{F}$,
  we see that 
  $\xi^{-k}\mathfrak{D}_r(\xi)$ has poles only at
  $\xi=m\omega_1+n\omega_2$ ($m,n\in\mathbb{Z}$). 
  Hence
  \begin{equation}
    \frac{1}{2\pi i}\int_{C_{M,N}}\xi^{-k}\mathfrak{D}_r(\xi)d\xi=
    \sum_{\substack{-M\leq m\leq M\\-N\leq n\leq N}}
    \Res_{\xi=m\omega_1+n\omega_2}\xi^{-k}\mathfrak{D}_r(\xi).
  \end{equation}
  By Lemmas  \ref{lm:bound1}, \ref{lm:bound2},
 \ref{lm:bound3} and  \ref{lm:bound4}, we have
  \begin{equation}\label{4-31}
    \Lim_{M,N}
    \sum_{\substack{-M\leq m\leq M\\-N\leq n\leq N}}
    \Res_{\xi=m\omega_1+n\omega_2}\xi^{-k}\mathfrak{D}_r(\xi)=
    \frac{1}{2\pi i}
    \Lim_{M,N}
    \int_{C_{M,N}}\xi^{-k}\mathfrak{D}_r(\xi)d\xi=
    0
  \end{equation}
under the assumptions of Theorem \ref{Main-theorem}.
 
  The poles at
  $\xi=m\omega_1+n\omega_2$ ($m\in\mathbb{Z}\setminus\{0\}$, $n\in\mathbb{Z}$) are simple and their residues are calculated as
  \begin{equation}\label{4-32}
    \begin{split}
      \Res_{\xi=m\omega_1+n\omega_2}\xi^{-k}\mathfrak{D}_r(\xi)
      &=\Res_{h=0}(m\omega_1+n\omega_2+h)^{-k}\mathfrak{E}(m\omega_1+n\omega_2+h)\mathfrak{F}(m\omega_1+n\omega_2+h)^r\\
       &=\Res_{h=0}(m\omega_1+n\omega_2+h)^{-k}\mathfrak{E}(h)\mathfrak{F}(m\omega_1+h)^r e^{2\pi i (mx+n(y+rz))}\\
      &=
      (m\omega_1+n\omega_2)^{-k}
      \mathfrak{F}(m\omega_1)^r
      e^{2\pi i (mx+n(y+rz))}.
    \end{split}
  \end{equation}
  Similarly the poles at
  $\xi=n\omega_2$ ($n\in\mathbb{Z}\setminus\{0\}$) are of order $(1+r)$ and their residues are
  \begin{equation}\label{4-33}
    \begin{split}
      \Res_{\xi=n\omega_2}\xi^{-k}\mathfrak{D}_r(\xi)
      &=
      \Res_{h=0}(n\omega_2+h)^{-k}\mathfrak{D}_r(h)e^{2\pi i n(y+rz)}\\
      &=
      \Res_{h=0}
      \Bigl(
      \sum_{j=0}^\infty
      (-1)^j\frac{(k+j-1)!}{j!(k-1)!}(n\omega_2)^{-k-j}h^j
      \Bigr)
      \mathfrak{D}_r(h)e^{2\pi i n(y+rz)}.
    \end{split}
  \end{equation}
  Since $\mathfrak{D}_r(\xi)$ has a pole of order $(1+r)$ at the origin,
  by putting $\mathfrak{D}_r(\xi)=\sum_{l=0}^\infty \mathcal{D}_l\xi^{l-r-1}$ we obtain
  \begin{equation}\label{4-34}
    \Res_{\xi=n\omega_2}\xi^{-k}\mathfrak{D}_r(\xi)
    =
    \Bigl(
    \sum_{j=0}^r
    \mathcal{D}_{r-j}
    \frac{(-1)^j}{j!}
    \frac{(k+j-1)!}{(k-1)!}
    (n\omega_2)^{-k-j}e^{2\pi i n(y+rz)}
    \Bigr).
  \end{equation}
Hence by Proposition \ref{prop:bernoulli}, with noting $y+rz\not\in\mathbb{Z}$ 
in the case $k=1$,
  we have 
  \begin{multline}\label{4-35}
    \lim_{N\to\infty}\sum_{\substack{-N\leq n\leq N\\n\neq 0}}
    \Res_{\xi=n\omega_2}\xi^{-k}\mathfrak{D}_r(\xi)
    \\
    \begin{aligned}
      &=
      \sum_{j=0}^r
      \mathcal{D}_{r-j}
      \frac{(-1)^j}{j!}
      \frac{(k+j-1)!}{(k-1)!}
      \lim_{N\to\infty}\sum_{\substack{-N\leq n\leq N\\n\neq 0}}
      (n\omega_2)^{-k-j}e^{2\pi i n(y+rz)}
      \\
      &=
      -\sum_{j=0}^r
      \mathcal{D}_{r-j}
      \frac{(-1)^j}{j!}
      \frac{(k+j-1)!}{(k-1)!}
      \frac{\mathcal{B}_{k+j}(\{y+rz\};\omega_2)}{(k+j)!}.
    \end{aligned}
  \end{multline}
Since from \eqref{bernoulli} we see that
  \begin{equation}\label{4-36}
    \begin{split}
      \frac{d^j}{d\xi^j}(\xi^{-1}-\mathfrak{F}(\xi;z;\omega_2))
      &=
      -\sum_{l=j+1}^\infty \frac{(l-1)!}{(l-j-1)!}\frac{\mathcal{B}_l(z;\omega_2)}
{l!}\xi^{l-j-1}
      \\
      &=
      -\sum_{l=1}^\infty \frac{(l+j-1)!}{(l-1)!}\frac{\mathcal{B}_{l+j}(z;\omega_2)}
{(l+j)!}\xi^{l-1},
    \end{split}
  \end{equation}
  and for $0\leq j\leq r$
  \begin{equation}\label{4-37}
    \Res_{\xi=0}\xi^{-k}\frac{d^j}{d\xi^j}\xi^{-1}=0  ,
  \end{equation}
  we have
  \begin{equation}\label{4-38}
    \begin{split}
    \lim_{N\to\infty}\sum_{\substack{-N\leq n\leq N\\n\neq 0}}
      \Res_{\xi=n\omega_2}\xi^{-k}\mathfrak{D}_r(\xi)
      &=
      \Res_{\xi=0}\xi^{-k}
      \sum_{j=0}^r
      \mathcal{D}_{r-j}
      \frac{(-1)^j}{j!}
      \frac{d^j}{d\xi^j}(\xi^{-1}-\mathfrak{F}(\xi;\{y+rz\};\omega_2))
      \\
      &=
      -   \Res_{\xi=0}\xi^{-k}
      \sum_{j=0}^r
      \mathcal{D}_{r-j}
      \frac{(-1)^j}{j!}
      \frac{d^j}{d\xi^j}\mathfrak{F}(\xi;\{y+rz\};\omega_2)
      \\
      &=
      -\Res_{\xi=0}\xi^{-k}
      \Res_{\eta=0}
      \mathfrak{D}_r(\eta)
      \mathfrak{F}(\xi-\eta;\{y+rz\};\omega_2).
    \end{split}
  \end{equation}
  
From \eqref{4-31}, \eqref{4-32} and \eqref{4-38} we obtain
  \begin{equation}\label{4-39}
    \begin{split}
         &0=\Lim_{M,N}
      \sum_{\substack{-M\leq m\leq M\\-N\leq n\leq N}}
      \Res_{\xi=m\omega_1+n\omega_2}\xi^{-k}\mathfrak{D}_r(\xi)
      \\
      &=\Lim_{M,N}
      \biggl(
      \sum_{\substack{-M\leq m\leq M\\-N\leq n\leq N\\m\neq 0}}
      \frac{\mathfrak{F}(m\omega_1)^re^{2\pi i(mx+n(y+rz))}}{(m\omega_1+n\omega_2)^{k}}
      +
      \Res_{\xi=0}\xi^{-k}\mathfrak{D}_r(\xi)
      +
      \sum_{\substack{-N\leq n\leq N\\n\neq 0}}
      \Res_{\xi=n\omega_2}\xi^{-k}\mathfrak{D}_r(\xi)
      \biggr)
      \\
      &=\Lim_{M,N}
      \sum_{\substack{-M\leq m\leq M\\-N\leq n\leq N\\m\neq 0}}
      \frac{\mathfrak{F}(m\omega_1)^re^{2\pi i(mx+n(y+rz))}}{(m\omega_1+n\omega_2)^{k}}
      +
      \Res_{\xi=0}\xi^{-k}\mathfrak{D}_r(\xi)
      +
      \lim_{N\to\infty}
      \sum_{\substack{-N\leq n\leq N\\n\neq 0}}
      \Res_{\xi=n\omega_2}\xi^{-k}\mathfrak{D}_r(\xi)
      \\
      &=\Lim_{M,N}
      \sum_{\substack{-M\leq m\leq M\\-N\leq n\leq N\\m\neq 0}}
      \frac{\mathfrak{F}(m\omega_1)^re^{2\pi i(mx+n(y+rz))}}{(m\omega_1+n\omega_2)^{k}}
      +
      \Res_{\xi=0}\xi^{-k}\mathfrak{K}_r(\xi).
    \end{split}
  \end{equation}
  
  Next we show the holomorphy of $\mathfrak{K}_r$ in the neighborhood of the origin.
  We see that
  \begin{equation}
    \begin{split}
      \mathfrak{K}_r(\xi)
      &=
      \mathfrak{D}_r(\xi)
      -
      \Res_{\eta=0}
      \mathfrak{D}_r(\eta)
      \mathfrak{F}(\xi-\eta;\{y+rz\};\omega_2)
      \\
      &=
      \mathfrak{D}_r(\xi)
      -
      \Res_{\eta=0}
      \mathfrak{D}_r(\eta)
      (\xi-\eta)^{-1}
      -
      \Res_{\eta=0}
      \mathfrak{D}_r(\eta)
      \bigl(\mathfrak{F}(\xi-\eta;\{y+rz\};\omega_2)-(\xi-\eta)^{-1}\bigr).
    \end{split}
  \end{equation}
  Since the last term is holomorphic in the neighborhood of the origin,
  it is sufficient to check the first two terms.
  We have
  \begin{equation}
    \begin{split}
      \mathfrak{D}_r(\xi)
      -
      \Res_{\eta=0}
      \mathfrak{D}_r(\eta)
      (\xi-\eta)^{-1}
      &=
      \sum_{j=0}^r \mathcal{D}_j\xi^{j-r-1}+O(1)
      -
      \xi^{-1}
      \Res_{\eta=0}
      \mathfrak{D}_r(\eta)
      (1-\eta/\xi)^{-1}
      \\
      &=
      \sum_{j=0}^r \mathcal{D}_j\xi^{j-r-1}+O(1)
      -
      \xi^{-1}
      \sum_{j=0}^r
      \mathcal{D}_j\xi^{j-r}
      \\
      &=O(1),
    \end{split}
  \end{equation}
which implies the holomorphy of $\mathfrak{K}$ in the neighborhood of the origin.
Hence \eqref{eq:K_taylor} is valid, and so
$$
\Res_{\xi=0}\xi^{-k}\mathfrak{K}_r(\xi)=\frac{{\cal{K}}_{k,r}}{k!}.
$$
From this and \eqref{4-39} we obtain the conclusion \eqref{K_kq-1}.


  If $k\geq3$, or $k=2$ and $0<z<1$,
  then
  the series converges absolutely uniformly and
  we have the result.

\end{proof}


\section{Relations among $\mathcal{K}_{k,r}$, $\mathcal{H}_l$ and
$\mathcal{B}^{\laa r \raa}_{k}$}\label{sec-3-3}

In this section, we first give the following result 
which includes the previous result given in \cite{TsBul}. At the end of this section, we will give the proof of Theorem \ref{T-1-1}.

\begin{theorem}
\label{Main-theorem-2}
Assume $0\leq z\leq 1$ and let $k\in\mathbb{N}$.
For $(x,y)\neq(0,0)$.
  \begin{multline}
    \label{K_kq-3}
    \mathcal{K}_{k,r}(x,y,z;\omega_1,\omega_2)
    \\
    \begin{aligned}
      &=  
      k!
      \sum_{l=r+1}^{k+r}
      \frac{\mathcal{H}_l(x,y;\omega_1,\omega_2)}{l!}
      \frac{\mathcal{B}^{\laa r \raa}_{k+r-l}(z;\omega_2)}{(k+r-l)!}
      \\
      &\quad+
      k!
      \sum_{l=0}^{r}
      \frac{\mathcal{H}_l(x,y;\omega_1,\omega_2)}{l!}\\
      &\quad \qquad \times \Bigl(
      \frac{\mathcal{B}^{\laa r \raa}_{k+r-l}(z;\omega_2)}{(k+r-l)!}
      -
      \sum_{j=0}^{r-l}
      \frac{\mathcal{B}^{\laa r \raa}_{r-j-l}(z;\omega_2)}{(r-j-l)!}
      \frac{(-1)^j}{j!}
      \frac{(k+j-1)!}{(k-1)!}\frac{\mathcal{B}_{k+j}(\{y+rz\};\omega_2)}{(k+j)!}
      \Bigr).
    \end{aligned}
  \end{multline}
For $(x,y)=(0,0)$ and $z\neq 1$,
\begin{multline}
\label{K_kq-2}
    \mathcal{K}_{k,r}(0,0,z;\omega_1,\omega_2)
    \\
    \begin{aligned}
      &=  
      k!
      \sum_{l=r+1}^{k+r}
      \frac{\mathcal{H}_l(\omega_1,\omega_2)}{l!}
      \frac{\mathcal{B}^{\laa r \raa}_{k+r-l}(z;\omega_2)}{(k+r-l)!}
      \\
      &\qquad+
      k!
      \sum_{\substack{l=0\\l\neq 1}}^{r}
      \frac{\mathcal{H}_l(\omega_1,\omega_2)}{l!}\\
      &\qquad \qquad \times 
      \Bigl(
      \frac{\mathcal{B}^{\laa r \raa}_{k+r-l}(z;\omega_2)}{(k+r-l)!}
      -
      \sum_{j=0}^{r-l}
      \frac{\mathcal{B}^{\laa r \raa}_{r-j-l}(z;\omega_2)}{(r-j-l)!}
      \frac{(-1)^j}{j!}
      \frac{(k+j-1)!}{(k-1)!}\frac{\mathcal{B}_{k+j}(\{rz\};\omega_2)}{(k+j)!}
      \Bigr).
    \end{aligned}
  \end{multline}
\end{theorem}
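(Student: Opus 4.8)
The plan is to compute the Taylor coefficient $\mathcal{K}_{k,r}$ directly from the defining residue formula \eqref{K_r}, expanding everything into power series at $\xi=0$ and matching the coefficient of $\xi^{k-1}$. Recall from the proof of Theorem \ref{Main-theorem} that writing $\mathfrak{D}_r(\xi)=\sum_{l=0}^\infty\mathcal{D}_{l}\xi^{l-r-1}$, the product structure $\mathfrak{D}_r=\mathfrak{E}\cdot\mathfrak{F}^r$ together with \eqref{def-H_k-omega} and \eqref{eq:Tay_F} gives the Cauchy-product expression $\mathcal{D}_l = \sum_{a+b=l}\binom{l}{a}\,\mathcal{H}_a(x,y;\omega_1,\omega_2)\,\mathcal{B}^{\langle r\rangle}_b(z;\omega_2)\cdot(\text{combinatorial factors})$, i.e. $\mathcal{D}_l/l! = \sum_{a=0}^{l}\frac{\mathcal{H}_a(x,y;\omega_1,\omega_2)}{a!}\frac{\mathcal{B}^{\langle r\rangle}_{l-a}(z;\omega_2)}{(l-a)!}$. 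So the first task is simply to record this identity and the analogous one for $\mathcal{B}^{\langle r\rangle}$ itself.

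Next I would split the residue in \eqref{K_r} into its two pieces. The first piece, $\Res_{\eta=0}(\mathfrak{D}_r(\xi)\eta^{-1})$, is just $\mathfrak{D}_r(\xi)$ minus its singular part at $\xi=0$ (this is exactly the holomorphy computation already carried out at the end of Section \ref{sec-3-2}); its Taylor coefficients at $\xi=0$ contribute $\frac{\mathcal{D}_{k+r}}{(k+r-1)!}\cdot(\text{normalization})$, which after converting via the Cauchy product yields precisely the ``full range'' sum $k!\sum_{l=0}^{k+r}\frac{\mathcal{H}_l}{l!}\frac{\mathcal{B}^{\langle r\rangle}_{k+r-l}}{(k+r-l)!}$. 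The second piece, $-\Res_{\eta=0}\mathfrak{D}_r(\eta)\,\mathfrak{F}(\xi-\eta;\{y+rz\};\omega_2)$, is handled exactly as in \eqref{4-36}--\eqref{4-38}: since $\mathfrak{D}_r(\eta)$ has a pole of order $r+1$ at $\eta=0$, only the Laurent coefficients $\mathcal{D}_{r-j}$ for $0\le j\le r$ survive, and pairing them with the Taylor expansion of $\mathfrak{F}(\xi-\eta)$ and then extracting the $\xi^{k-1}$-coefficient produces the term $-k!\sum_{l=0}^{r}\frac{\mathcal{H}_l}{l!}\sum_{j=0}^{r-l}\frac{\mathcal{B}^{\langle r\rangle}_{r-j-l}}{(r-j-l)!}\frac{(-1)^j}{j!}\frac{(k+j-1)!}{(k-1)!}\frac{\mathcal{B}_{k+j}(\{y+rz\})}{(k+j)!}$. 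Combining the two pieces, the ``full range'' sum $\sum_{l=0}^{k+r}$ splits as $\sum_{l=r+1}^{k+r}+\sum_{l=0}^{r}$, and the $l\le r$ part merges with the second-piece contribution to give the parenthesized expression in \eqref{K_kq-3}. This proves the $(x,y)\neq(0,0)$ case.

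For the case $(x,y)=(0,0)$ with $z\neq 1$, I would pass to the limit $x\to 0$ (with $y=0$) in \eqref{K_kq-3}, using the continuity of $\mathfrak{K}_r$ in $x$ at $x=0$ established via Lemma \ref{lem2-1} and the surrounding computation \eqref{eq:K_F1}. The only subtlety is the $l=1$ term: by \eqref{eq:H1}, $\mathcal{H}_1(x,0;\omega_1,\omega_2)$ does \emph{not} tend to $0$ but blows up like $1/\omega_2\cdot(\text{something})/x$ as $x\to 0$, yet \eqref{eq:lim_xH} shows $x\mathcal{H}_1(x,0;\omega_1,\omega_2)\to 1/\omega_2$, and in the regularized object $\widetilde{\mathfrak{E}}$ the $l=1$ contribution has already been subtracted off (cf. \eqref{eq:def_Ft} and the fact $\widetilde{\mathcal{H}}_1=0$). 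So the net effect is that the $l=1$ term simply drops out of the sum, which is exactly why \eqref{K_kq-2} has $l\neq 1$ in its second summation (while $l=1$ is vacuously absent from the first summation since $1\le r$ forces $l=1$ into the range $0\le l\le r$ only, and there it is excluded). I would verify that no leftover constant survives — this uses that $\mathcal{B}^{\langle r\rangle}_{b}$ and the combinatorial coefficients multiplying the vanishing part of $\mathcal{H}_1$ are finite, so the limit is clean.

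The main obstacle I anticipate is bookkeeping rather than conceptual: keeping the three nested sums (over $l$, over $j$, and inside the Cauchy products) correctly aligned, and in particular being careful that the range splitting $\sum_{l=0}^{k+r}=\sum_{l=r+1}^{k+r}+\sum_{l=0}^{r}$ interacts correctly with the constraint $0\le j\le r-l$ (which forces $l\le r$, explaining why the ``correction'' sum only touches $l\le r$). A secondary delicate point is the $l=1$, $(x,y)=(0,0)$ degeneration: one must argue that the product of the $O(1/x)$ part of $\mathcal{H}_1(x,0)$ with its multiplier vanishes in the limit because, after the regularization implicit in $\mathfrak{K}_r$, that part is exactly the piece that was removed — so it contributes nothing, and the remaining (continuous) part of $\mathcal{H}_1$ would contribute, but $\widetilde{\mathcal{H}}_1=0$ kills it too. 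Making this last cancellation rigorous via $\widetilde{\mathfrak{E}}$ rather than by manipulating divergent quantities is the cleanest route.
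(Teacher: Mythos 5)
Your proposal is correct and follows essentially the same route as the paper: the Cauchy product of the expansions \eqref{def-H_k-omega} and \eqref{eq:Tay_F}, the residue computation of \eqref{4-36}--\eqref{4-38} for the second piece of \eqref{K_r}, holomorphy of $\mathfrak{K}_r$ to discard the non-positive powers, and the continuity \eqref{eq:K_F1} via $\widetilde{\mathfrak{E}}$ for the $(x,y)=(0,0)$ case. Your extra discussion of why the divergent $l=1$ term is harmless (its multiplier vanishes by Lemma \ref{lm:vanish}, and $\widetilde{\mathcal{H}}_1=0$) is a correct elaboration of what the paper compresses into the single sentence ``Using the continuity \eqref{eq:K_F1}, we obtain \eqref{K_kq-2}.''
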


\begin{proof}
From \eqref{def-H_k-omega} and \eqref{eq:Tay_F} we have
  \begin{equation*}
    \begin{split}
      \mathfrak{E}(\xi) \mathfrak{F}(\xi)^r
      &=
      \Bigl(\sum_{l=0}^\infty \frac{\mathcal{H}_l}{l!}\xi^{l-1}\Bigr)
      \Bigl(\sum_{k=0}^\infty \frac{\mathcal{B}^{\laa r \raa}_k}{k!}\xi^{k-r}\Bigr)
      =
      \sum_{j=0}^\infty
      \Bigl(
      \sum_{l=0}^j
      \frac{\mathcal{H}_l}{l!}
      \frac{\mathcal{B}^{\laa r \raa}_{j-l}}{(j-l)!}
      \Bigr)\xi^{j-r-1},
    \end{split}
  \end{equation*}
and hence
  \begin{equation*}
    \begin{split}
      \mathfrak{K}_r(\xi)
      &=
      \mathfrak{D}_r(\xi)-\Res_{\eta=0}
      \mathfrak{D}_r(\eta)
      \mathfrak{F}(\xi-\eta;\{y+rz\};\omega_2)
      \\
      &= 
      \sum_{k=-r}^\infty
      \Bigl( 
      \sum_{l=0}^{k+r} 
      \frac{\mathcal{H}_l}{l!} 
      \frac{\mathcal{B}^{\laa r \raa}_{k+r-l}}{(k+r-l)!}  
      \Bigr)\xi^{k-1}  \\
      &\qquad
      -
      \sum_{j=0}^r
      \Bigl(
      \sum_{l=0}^{r-j}
      \frac{\mathcal{H}_l}{l!}
      \frac{\mathcal{B}^{\laa r \raa}_{r-j-l}}{(r-j-l)!}
      \Bigr)
      \frac{(-1)^j}{j!}
      \frac{d^j}{d\xi^j}\mathfrak{F}(\xi;\{y+rz\};\omega_2)
      \\
      &=  
      \sum_{k=1}^\infty
      \Bigl(
      \sum_{l=0}^{k+r}
      \frac{\mathcal{H}_l}{l!}
      \frac{\mathcal{B}^{\laa r \raa}_{k+r-l}}{(k+r-l)!}
      \Bigr)\xi^{k-1}
      \\
      &\qquad
      -
      \sum_{j=0}^r
      \Bigl(
      \sum_{l=0}^{r-j}
      \frac{\mathcal{H}_l}{l!}
      \frac{\mathcal{B}^{\laa r \raa}_{r-j-l}}{(r-j-l)!}
      \Bigr)
      \frac{(-1)^j}{j!}
      \sum_{k=1}^\infty \frac{(k+j-1)!}{(k-1)!}\frac{\mathcal{B}_{k+j}(\{y+rz\};\omega_2)}{(k+j)!}\xi^{k-1},
    \end{split}
  \end{equation*}
where on the first sum on the right-most side, the part $-r\leq k\leq 0$ 
is removed because $\mathfrak{K}_r(\xi)$ is holomorphic by Theorem 
\ref{Main-theorem}.
Comparing the coefficients of $\xi^{k-1}$ we obtain \eqref{K_kq-3}.
Using the continuity \eqref{eq:K_F1}, we obtain \eqref{K_kq-2}.
\end{proof}

\begin{theorem}
\label{thm:diff_eq}
  For $z\not\in(\mathbb{Z}-y)/r$,
  \begin{equation}
    \label{eq:diff_eq_gen}
    \frac{\omega_2}{2\pi ir\xi}
    \frac{\partial}{\partial z}
    \mathfrak{K}_r(\xi;x,y,z;\omega_1,\omega_2)
    =
    \mathfrak{K}_r(\xi;x,y,z;\omega_1,\omega_2),
  \end{equation}
and
\begin{equation}
  \label{eq:diff_eq_Ber}
  \frac{\omega_2}{2\pi ir}
  \frac{\partial}{\partial z}
  \mathcal{K}_{k,r}(x,y,z;\omega_1,\omega_2)
= k
  \mathcal{K}_{k-1,r}(x,y,z;\omega_1,\omega_2)\qquad(k\geq 2).
\end{equation}
Furthermore $\mathcal{K}_{k,r}(x,y,z;\omega_1,\omega_2)$ is a polynomial function of degree 
at most $(k-1)$ in $z$ in the interval where $[y+rz]$ is constant.
The degree is $(k-1)$ if and only if $\mathfrak{K}_r(0;x,y,z;\omega_1,\omega_2)\neq0$.
\end{theorem}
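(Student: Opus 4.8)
The plan is to obtain \eqref{eq:diff_eq_gen} directly from the elementary identity
\begin{equation*}
\frac{\partial}{\partial z}\mathfrak{F}(\xi;z;\omega_2)=\frac{2\pi i\xi}{\omega_2}\,\mathfrak{F}(\xi;z;\omega_2),
\end{equation*}
which is immediate from the definition \eqref{eq:def_G}. Raising to the $r$-th power gives $\partial_z\mathfrak{F}(\xi;z;\omega_2)^r=(2\pi ir\xi/\omega_2)\mathfrak{F}(\xi;z;\omega_2)^r$, and since $\mathfrak{E}$ does not involve $z$ this yields $\partial_z\mathfrak{D}_r(\xi)=(2\pi ir\xi/\omega_2)\mathfrak{D}_r(\xi)$. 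For the residue term in \eqref{K_r} the point is that on any interval in which $[y+rz]$ is constant the function $\{y+rz\}$ is differentiable in $z$ with derivative $r$, so that
\begin{equation*}
\frac{\partial}{\partial z}\mathfrak{F}(\xi-\eta;\{y+rz\};\omega_2)=\frac{2\pi ir(\xi-\eta)}{\omega_2}\,\mathfrak{F}(\xi-\eta;\{y+rz\};\omega_2);
\end{equation*}
moreover $\Res_{\eta=0}$ is a finite $\mathbb{C}$-linear combination of Laurent coefficients at $\eta=0$, so $\partial_z$ passes through it. Differentiating \eqref{K_r}, the two contributions coming from the $\eta$-dependence and the $(\xi-\eta)$-dependence of the second term combine, via $\eta+(\xi-\eta)=\xi$, into $(2\pi ir\xi/\omega_2)\mathfrak{D}_r(\eta)\mathfrak{F}(\xi-\eta;\{y+rz\};\omega_2)$, and \eqref{eq:diff_eq_gen} follows. (For $(x,y)=(0,0)$ the hypothesis $z\notin\mathbb{Z}/r$ rules out $z=0,1$, so only the first definition is relevant, and since $\mathfrak{E}(\xi;0,0;\omega_1,\omega_2)$ is still $z$-free the same computation applies.)

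Next I would extract coefficients. Substituting the Taylor expansion \eqref{eq:K_taylor} — legitimate because $\mathfrak{K}_r$ is holomorphic at $\xi=0$ by Theorem \ref{Main-theorem} — into \eqref{eq:diff_eq_gen} and comparing coefficients of $\xi^{k-1}$ gives $\partial_z\mathcal{K}_{1,r}=0$ and $(\omega_2/2\pi ir)\partial_z\mathcal{K}_{k,r}=k\,\mathcal{K}_{k-1,r}$ for $k\geq2$, which is \eqref{eq:diff_eq_Ber}. In particular, on each interval where $[y+rz]$ is constant, $\mathcal{K}_{1,r}(x,y,z;\omega_1,\omega_2)$ is independent of $z$, and evaluating \eqref{eq:K_taylor} at $\xi=0$ shows it equals $\mathfrak{K}_r(0;x,y,z;\omega_1,\omega_2)$.

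The polynomiality then follows by induction on $k$ over such an interval: the case $k=1$ is the constancy just noted (degree at most $0$), and if $\mathcal{K}_{k-1,r}$ is a polynomial in $z$ of degree at most $k-2$, then by \eqref{eq:diff_eq_Ber} so is $\partial_z\mathcal{K}_{k,r}$, whence $\mathcal{K}_{k,r}$ is a polynomial of degree at most $k-1$. To identify the top degree I would track the leading coefficient: writing $a_k$ for the coefficient of $z^{k-1}$ in $\mathcal{K}_{k,r}$, \eqref{eq:diff_eq_Ber} forces $(k-1)a_k=(2\pi irk/\omega_2)a_{k-1}$, and iterating down to $a_1=\mathcal{K}_{1,r}$ gives $a_k=k(2\pi ir/\omega_2)^{k-1}\mathcal{K}_{1,r}$. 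Hence $a_k\neq0$ if and only if $\mathcal{K}_{1,r}=\mathfrak{K}_r(0;x,y,z;\omega_1,\omega_2)\neq0$, which is the final assertion.

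I do not anticipate a genuine difficulty here; the only delicate points are the interchange of $\partial_z$ with $\Res_{\eta=0}$ and the local constancy of $[y+rz]$ — precisely the role of the hypothesis $z\notin(\mathbb{Z}-y)/r$, which is where $\{y+rz\}$ ceases to be differentiable and where the polynomial pieces of $\mathcal{K}_{k,r}$ may differ.
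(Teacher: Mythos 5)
Your proof is correct and follows essentially the same route as the paper: the paper writes $\mathfrak{K}_r(\xi)=e^{2\pi i\xi rz/\omega_2}\widetilde{K}_r(\xi)$ with $\widetilde{K}_r$ locally independent of $z$, which is just the integrated form of the ODE you obtain by differentiating the definition termwise (the key cancellation $\eta+(\xi-\eta)=\xi$ being the same observation in both). Your induction via \eqref{eq:diff_eq_Ber} for the polynomiality and the leading-coefficient recursion $a_k=k(2\pi ir/\omega_2)^{k-1}\mathcal{K}_{1,r}$ match the paper's conclusion exactly.
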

\begin{proof}
Fix $x$ and $y$.
By definition, we see that $\mathfrak{K}_r(\xi)$ can be rewritten as the form
\begin{equation}\label{siki5-5}
\mathfrak{K}_r(\xi)=e^{2\pi i\xi rz/\omega_2}\widetilde{K}_r(\xi), 
\end{equation}
where 
$\widetilde{K}_r(\xi)$ is independent of $z$
in the interval where $[y+rz]$ is constant. 
This expression yields \eqref{eq:diff_eq_gen} and hence
\eqref{eq:diff_eq_Ber}.
Furthermore,
$\widetilde{K}_r(\xi)=e^{-2\pi i\xi rz/\omega_2}\mathfrak{K}_r(\xi)$ and 
\eqref{eq:K_taylor}
imply that $\widetilde{K}_r(\xi)$ is holomorphic in $\xi$ around the origin.
Comparing the Taylor expansions of the both sides of \eqref{siki5-5},
we see that the degree of
$\mathcal{K}_{k,r}(x,y,z;\omega_1,\omega_2)$ in $z$ is at most $(k-1)$.
By \eqref{eq:diff_eq_Ber}, we have
\begin{equation}
  \mathcal{K}_{k,r}(x,y,z;\omega_1,\omega_2)=
\Bigl(\frac{2\pi ir}{\omega_2}\Bigr)^{k-1} k
  \mathcal{K}_{1,r}(x,y,z;\omega_1,\omega_2)z^{k-1}+O(z^{k-2}).
\end{equation}
Noting $\mathcal{K}_{1,r}(x,y,z;\omega_1,\omega_2)=\mathfrak{K}_r(0)$, we have 
the last assertion of the theorem.
\end{proof}

Now from Theorems \ref{Main-theorem}, \ref{Main-theorem-2} and \ref{thm:diff_eq} we 
can immediately deduce Theorem \ref{T-1-1} as follows.

\begin{proof}[Proof of Theorem \ref{T-1-1}]
For $r\geq2$,
from \eqref{Gene-Hur} and \eqref{K_kq-1} we have
  \begin{equation}\label{nagasugiru0}
      \frac{\mathcal{K}_{k,r}(0,0,z;1,i)}{k!}
      =
      -\sum_{\substack{(m,n)\in\mathbb{Z}^2\\m\neq 0}}
      \Bigl(
      \pi\frac{(-1)^n}{\sinh(m\pi)}
      \Bigr)^r
      \frac{e^{2\pi (m+in)r(z-1/2)}}{(m+ni)^{k}},
  \end{equation}
while from \eqref{K_kq-2} we have 
\begin{multline}\label{nagasugiru1}
    \frac{\mathcal{K}_{k,r}(0,0,z;1,i)}{k!}
    \\
    \begin{aligned}
      &=  
      \sum_{l=r+1}^{k+r}
      \frac{\mathcal{H}_l(1,i)}{l!}
      \frac{\mathcal{B}^{\laa r \raa}_{k+r-l}(z;i)}{(k+r-l)!}
      \\
      &\qquad+
      \sum_{\substack{l=0\\l\neq 1}}^{r}
      \frac{\mathcal{H}_l(1,i)}{l!}
      \Bigl(
      \frac{\mathcal{B}^{\laa r \raa}_{k+r-l}(z;i)}{(k+r-l)!}
      -
      \sum_{j=0}^{r-l}
      \frac{\mathcal{B}^{\laa r \raa}_{r-j-l}(z;i)}{(r-j-l)!}
      \frac{(-1)^j}{j!}
      \frac{(k+j-1)!}{(k-1)!}\frac{\mathcal{B}_{k+j}(\{rz\};i)}{(k+j)!}
      \Bigr)
    \end{aligned}
\end{multline}
which is, by Lemma \ref{lem.2.3} and \eqref{2-33},
further equal to

\begin{multline}\label{nagasugiru2}
   \begin{aligned}
     &-
      \sum_{l=r+1}^{k+r}
      \frac{(2\varpi)^lH_l}{l!}
      \frac{(2\pi)^{k+r-l}B^{\laa r \raa}_{k+r-l}(z)}{(k+r-l)!}
      \\
      &-
      \sum_{l=4}^{r}
      \frac{(2\varpi)^lH_l}{l!}(2\pi)^{k+r-l}
      \Bigl(
      \frac{B^{\laa r \raa}_{k+r-l}(z)}{(k+r-l)!}
      -
      \sum_{j=0}^{r-l}
      \frac{B^{\laa r \raa}_{r-j-l}(z)}{(r-j-l)!}
      \frac{(-1)^j}{j!}
      \frac{(k+j-1)!}{(k-1)!}\frac{B_{k+j}(\{rz\})}{(k+j)!}
      \Bigr)
      \\
      &+
      \frac{2\pi}{2!}(2\pi)^{k+r-2}
      \Bigl(
      \frac{B^{\laa r \raa}_{k+r-2}(z)}{(k+r-2)!}
      -
      \sum_{j=0}^{r-2}
      \frac{B^{\laa r \raa}_{r-j-2}(z)}{(r-j-2)!}
      \frac{(-1)^j}{j!}
      \frac{(k+j-1)!}{(k-1)!}\frac{B_{k+j}(\{rz\})}{(k+j)!}
      \Bigr)
      \\
      &+
      (2\pi)^{k+r}
      \Bigl(
      \frac{B^{\laa r \raa}_{k+r}(z)}{(k+r)!}
      -
      \sum_{j=0}^{r}
      \frac{B^{\laa r \raa}_{r-j}(z)}{(r-j)!}
      \frac{(-1)^j}{j!}
      \frac{(k+j-1)!}{(k-1)!}\frac{B_{k+j}(\{rz\})}{(k+j)!}
      \Bigr).
    \end{aligned}
  \end{multline}
From \eqref{nagasugiru0} and \eqref{nagasugiru2} we obtain
\eqref{eq:formula_0}.
For $r=1$, a similar calculation yields
\eqref{eq:formula_1}.
 Note that in the case when $k=1$, we interpret 
$\sum_{\substack{(m,n)\in\mathbb{Z}^2\\m\neq 0}}$
as
$\Lim_{M,N}
\sum_{\substack{-M\leq m\leq M\\-N\leq n\leq N\\m\neq 0}}$,
which is convergent. 
The statement for the degree
  with respect to $\pi$, $\varpi^4$ 
  is clear.
  As for $z$, it follows from Theorem \ref{thm:diff_eq}.
\end{proof}

\bigskip

\section{Explicit examples} \label{sec-4}

From Theorem \ref{T-1-1}, we can give the following explicit formulas.

\begin{example}
For $z\in \mathbb{R}$ with $0\leq z\leq 1$, from \eqref{eq:formula_1} we have 
  \begin{align}
    \label{ex-eq-1}
    &\pi\sum_{\substack{(m,n)\in\mathbb{Z}^2\\m\neq 0}}
    \frac{(-1)^n}{\sinh(m\pi)}
    \frac{e^{2\pi (m+in)(z-1/2)}}{(m+ni)^{2}}
    =
    \left(\frac{2 \pi ^3}{3}-2 \pi ^2\right) z
    -\frac{\pi ^3}{3}+\pi ^2 \qquad(z\neq0,1),
    \\[3truemm]
\label{ex-eq-2}
    &\pi\sum_{\substack{(m,n)\in\mathbb{Z}^2\\m\neq 0}}
    \frac{(-1)^n}{\sinh(m\pi)}
    \frac{e^{2\pi (m+in)(z-1/2)}}{(m+ni)^{3}}  
    \\
\notag 
    &\qquad\qquad=
    \left(\frac{2 \pi ^4}{3}-2 \pi ^3\right) z^2
    +\left(-\frac{2 \pi ^4}{3}+2 \pi^3\right) z 
    +\frac{\varpi^4}{15}
    +\frac{4 \pi ^4}{45}-\frac{\pi ^3}{3},
    \\[3truemm]
\label{ex-eq-3}
    &\pi\sum_{\substack{(m,n)\in\mathbb{Z}^2\\m\neq 0}}
    \frac{(-1)^n}{\sinh(m\pi)}
    \frac{e^{2\pi (m+in)(z-1/2)}}{(m+ni)^{4}}
\\
\notag
&\qquad\qquad
    =
    \left(\frac{4 \pi ^5}{9}-\frac{4 \pi ^4}{3}\right)z^3
    +\left(-\frac{2 \pi ^5}{3}+2 \pi ^4\right) z^2
\\
\notag
&\qquad\qquad\qquad
    +\left(\frac{2 \pi \varpi^4}{15}+\frac{8 \pi ^5}{45}-\frac{2 \pi ^4}{3}\right) z
    -\frac{\pi  \varpi^4}{15}+\frac{\pi^5}{45}.
  \end{align}
The right-hand sides of the above formulas are indeed polynomials in $z$.

In particular, putting $z=1/2$ in \eqref{ex-eq-2}, we obtain \eqref{1-11}. 
Considering the cases $z=0$ and $z=1$ in \eqref{ex-eq-2} and adding them, we obtain 
\begin{align}
& \sum_{\substack{(m,n)\in\mathbb{Z}^2\\m\neq 0}}
    \frac{\coth(m\pi)}{(m+ni)^{3}}=\frac{\varpi^4}{15\pi}+\frac{4\pi^3}{45}-\frac{\pi^2}{3}, \label{aust-1}
\end{align}
which was obtained in \cite{TsAust}. 
\end{example}

\begin{example}\label{Exam-fin}
For simplicity, we let 
$\mathcal{G}_k^{\laa r \raa}(\tau)=\mathcal{G}_k^{\laa r \raa}(0,0,1/2;1,\tau)$. 
From \eqref{eq:formula_0} we can evaluate $\mathcal{G}_k^{\laa r \raa}(i)$ 
for $r \geq 2$.   To evaluate the right-hand side of \eqref{eq:formula_0} in the
following cases, we need to know some special values of $B_k^{\laa r \raa}(1/2)$:
\begin{align*}
&B_0(1/2)=1,\; B_2(1/2)=-1/12,\; B_4(1/2)=7/240,\; B_6(1/2)=-31/1344,\\
&B_0^{\laa 2 \raa}(1/2)=1,\; B_2^{\laa 2 \raa}(1/2)=-1/6,\; 
B_4^{\laa 2 \raa}(1/2)=1/10,\; B_6^{\laa 2 \raa}(1/2)=-5/42,\\
&B_0^{\laa 3 \raa}(1/2)=1,\; B_2^{\laa 3 \raa}(1/2)=-1/4,\;    
B_4^{\laa 3 \raa}(1/2)=17/80,\; B_6^{\laa 3 \raa}(1/2)=-457/1344,\\
&B_0^{\laa 4 \raa}(1/2)=1,\; B_2^{\laa 4 \raa}(1/2)=-1/3,\;    
B_4^{\laa 4 \raa}(1/2)=11/30,\; B_6^{\laa 4 \raa}(1/2)=-31/42,\\
\end{align*}
while $B_k^{\laa r \raa}(1/2)=0$ for any odd $k$.  These values can be obtained
directly from the definition \eqref{Ber-high}. 
When $r\not\equiv k$ (mod 2), we can easily confirm that 
$\mathcal{G}_k^{\laa r \raa}(i)$ vanishes, hence we only give examples in which
$r\equiv k$ (mod 2). 

\begin{align}
& \mathcal{G}_2^{\laa 2 \raa}(i)=\sum_{\substack{(m,n)\in\mathbb{Z}^2\\m\neq 0}}
    \frac{1}{\sinh(m\pi)^2(m+ni)^{2}}
    =\frac{\varpi^4}{15\pi^2}-\frac{11}{45}\pi^2+\frac{2}{3}\pi, \label{4-2}\\
& \mathcal{G}_4^{\laa 2 \raa}(i)=\sum_{\substack{(m,n)\in\mathbb{Z}^2\\m\neq 0}}
    \frac{1}{\sinh(m\pi)^2(m+ni)^{4}}=-\frac{\varpi^4}{45}+\frac{37}{945}\pi^4-\frac{4}{45}\pi^3, \label{4-3}\\
& \mathcal{G}_3^{\laa 3 \raa}(i)=\sum_{\substack{(m,n)\in\mathbb{Z}^2\\m\neq 0}}
    \frac{(-1)^n}{\sinh(m\pi)^3(m+ni)^{3}}=-\frac{\varpi^4}{30\pi}+\frac{151}{1890}\pi^3-\frac{1}{5}\pi^2, \label{4-5}\\
& \mathcal{G}_2^{\laa 4 \raa}(i)=\sum_{\substack{(m,n)\in\mathbb{Z}^2\\m\neq 0}}
    \frac{1}{\sinh(m\pi)^4(m+ni)^{2}}=-\frac{\varpi^4}{15\pi^2}+\frac{191}{945}\pi^2-\frac{8}{15}\pi, \label{4-6}
\end{align}
and 
\begin{align}
& \mathcal{G}_1^{\laa 3 \raa}(i)=\Lim_{M,N}\sum_{\substack{-M\leq m \leq M \\
-N\leq n \leq N\\m\neq 0}}
    \frac{(-1)^n}{\sinh(m\pi)^3(m+ni)}=\frac{\varpi^4}{15\pi^3}-\frac{11}{45}\pi+\frac{2}{3}, \label{4-4}\\
& \mathcal{G}_{1}^{\laa 5 \raa}(i)=\Lim_{M,N}\sum_{\substack{-M\leq m \leq M \\
-N\leq n \leq N\\m\neq 0}}
    \frac{(-1)^n}{\sinh(m\pi)^5(m+ni)}=-\frac{\varpi^4}{15\pi^3}+\frac{191}{945}\pi-\frac{8}{15},\label{4-4-2}
\end{align}
which are higher-order versions of our previous results in \cite{TsBul}, where
we have proved, for example, \eqref{1-11},\ \eqref{1-11-2} and
\begin{align}
& \mathcal{G}_{1}^{\laa 1 \raa}(i)=\Lim_{M,N}\sum_{\substack{-M\leq m \leq M \\
-N\leq n \leq N\\m\neq 0}}
    \frac{(-1)^n}{\sinh(m\pi)(m+ni)}=\frac{\pi}{3}-1.\label{4-4-3}
\end{align}
Note that, by using the same method as introduced in \cite{TsBul}, we can prove that 
\begin{equation}
\mathcal{G}_1^{\laa 2k+1 \raa}(i) = \frac{1}{\pi} \mathcal{G}_2^{\laa 2k \raa}(i) \qquad (k\in \mathbb{N}). \label{rel-G1}
\end{equation}
In fact, from the above listed equations we can observe 
that \eqref{rel-G1} for $k=1, 2$ is true. 
\end{example}

Next we consider more general cases.

\begin{example}
\label{Exam-rho} 
Putting $(k,r,x,y,z,\omega_1,\omega_2)=(1,1,0,0,1/2,1,\tau)$ in \eqref{K_kq-2}, 
and using \eqref{eq:H0} and \eqref{2-33}, we have
\begin{equation}
\mathcal{K}_{1,1}(0,0,1/2;1,\tau)=\frac{1}{2}\mathcal{H}_2(1,\tau)+\frac{\pi^2}{3\tau^2}. \label{e-11-2}
\end{equation}
Therefore, by using \eqref{H_2-rel}, we obtain
\begin{equation}
\mathcal{K}_{1,1}(0,0,1/2;1,-1/\tau)-\tau^2 \mathcal{K}_{1,1}(0,0,1/2;1,\tau)=-2\pi i\tau+\frac{\pi^2}{3}\left(\tau^2-1\right).\label{e-11}
\end{equation}
On the other hand, letting $(k,r)=(1,1)$ in \eqref{K_kq-1}, we have
\begin{equation}
\label{e-12}
\mathcal{K}_{1,1}(0,0,1/2;1,\tau)=-\frac{\pi i}{\tau}\mathcal{G}_1^{\laa 1 \raa}(\tau).
\end{equation}
Substituting \eqref{e-12} in the both cases $\tau$ and $-1/\tau$ into \eqref{e-11}, 
we have the following reciprocity formula: 
\begin{equation}
\mathcal{G}_1^{\laa 1 \raa}(\tau)+\mathcal{G}_1^{\laa 1 \raa}(-1/\tau)=-2+\frac{\tau^2-1}{3\tau i}\pi. \label{Henkan-f}
\end{equation}
Letting $\tau=i$, we immediately obtain \eqref{4-4-3}. 
Next we let $\tau=\rho=e^{2\pi i/3}$. We prove
\begin{equation}
\label{e-13}
\mathcal{G}_1^{\laa 1 \raa}(-1/\rho)=-\frac{1}{\rho} \mathcal{G}_1^{\laa 1 \raa}(\rho).
\end{equation}
In fact, 
\begin{equation}
\label{e-13a}
\begin{split}
\mathcal{G}_1^{\laa 1 \raa}(-1/\rho)& = 
\Lim_{M,N}\sum_{\substack{-M\leq m \leq M \\
-N\leq n \leq N\\m\neq 0}}
\frac{(-1)^{n}}{\sinh(m\pi i(-\rho)) (m-n\rho^{-1})}\\
& =\Lim_{M,N}\sum_{\substack{-M\leq m \leq M \\
-N\leq n \leq N\\m\neq 0}}
\frac{(-1)^{n}}{\sinh(m\pi i(1/\rho+1)) (m(-\rho-\rho^2)-n\rho^{2})},
\end{split}
\end{equation}
because $\rho^{-1}=\rho^2=-\rho-1$.   
Since any of the three meanings of $\Lim_{M,N}$ gives the same value, here we
specify that $\Lim_{M,N}$ on the right-hand side of \eqref{e-13a} means the limit
$\lim_{M\to\infty}\lim_{N\to\infty}$.   Then
the right-hand side of \eqref{e-13a} is further equal to
\begin{equation}\label{e-13b}
\begin{split}
& \lim_{M\to\infty}\sum_{\substack{-M\leq m \leq M \\m\neq 0}}\lim_{N\to\infty}
\sum_{-N\leq n \leq N}
\frac{(-1)^{m+n}}{\sinh(m\pi i/\rho) (-m\rho-(m+n)\rho^2)}\\
&=-\frac{1}{\rho}\lim_{M\to\infty}\sum_{\substack{-M\leq m \leq M \\m\neq 0}}
\lim_{N\to\infty}\sum_{-N+m\leq n \leq N+m} 
A(m,n;\rho),
\end{split}
\end{equation}
where
$$
A(m,n;\rho)=\frac{(-1)^{n}}{\sinh(m\pi i/\rho) (m+n\rho)}.
$$
Fix an $M$ temporarily, and replace the condition $-N+m\leq n \leq N+m$ on the
inner sum by $-N\leq n \leq N$.   When $m>0$, the terms $A(m,n;\rho)$ for 
$N<n\leq N+m$ 
are to be removed, while the terms for
$-N<n\leq -N+m$ 
are to be added.   The total number of these terms is $O(M)$, and each of these
terms is estimated as $O(N^{-1})$ if $N$ is sufficiently large.   Hence the total
contribution of these terms tends to 0 when $M$ is fixed and $N\to\infty$.
The case $m<0$ is similar.
Therefore we can replace the condition on the inner sum by  $-N\leq n \leq N$
on the right-hand side of \eqref{e-13b}.   Then it is equal to
$-\rho^{-1}\mathcal{G}_1^{\laa 1 \raa}(\rho)$.   This proves \eqref{e-13}.

Putting $\tau=\rho$ in \eqref{Henkan-f} and combining with \eqref{e-13}, we have
$$\left( 1-\frac{1}{\rho}\right)\mathcal{G}_1^{\laa 1 \raa}(\rho)=-2+\frac{\rho^2-1}{3\rho i}\pi.$$
Therefore we have 
\begin{equation}
\label{e-16}
\mathcal{G}_1^{\laa 1 \raa}(\rho)=\frac{i}{\rho}\left(\frac{\pi}{3}+\frac{2i\rho^2}
{\rho-1}\right)=\frac{i(\pi-2\sqrt{3})}{3\rho}.
\end{equation}
From the Legendre formula for quasi-periods of the Weierstrass 
zeta-function (see, for example, \cite{HC}), we can deduce $G_2(\rho)=2\pi\rho/\sqrt{3}$.
It should be noted that \eqref{e-16} can also be obtained from this value of 
$G_2(\rho)$ and \eqref{e-11-2}.  A feature of the above proof of \eqref{e-16} is that it does not use the value of $G_2(\rho)$.

Next we add a few more examples.
Similarly to \eqref{1-4}, it is known that
\begin{equation}
\begin{split}
& G_6(\rho)=\frac{\widetilde{\varpi}^6}{35},\ \  G_{12}(\rho)=\frac{\widetilde{\varpi}^{12}}{7007},\ \ G_{18}(\rho)=\frac{\widetilde{\varpi}^{18}}{1440257},\cdots
\end{split}
\label{G-rho}
\end{equation}
and $G_{2j}(\rho)=0$ for $j\geq 2$ with $3\nmid j$, where
$$\widetilde{\varpi}= 2 \int_{0}^{1} \frac{1}{\sqrt{1-x^6}} dx=\frac{\Gamma(1/3)^3}{2^{4/3}\pi}=2.428650648\cdots.$$
These facts were first studied by Matter (see Lemmermeyer \cite[Chapter 8]{Lem}; also 
Katayama \cite[\S 6.1]{Katayama}). Hence, from \eqref{H_k}, we have
\begin{align*}
& \mathcal{H}_{6}(1,\rho)=-6!G_6(\rho)=-\frac{144}{7}\widetilde{\varpi}^6,\quad 
\mathcal{H}_{12}(1,\rho)=-12!G_{12}(\rho)=-\frac{6220800}{91}\widetilde{\varpi}^{12},
\ldots
\end{align*}
and $\mathcal{H}_{2j}(1,\rho)=0$ for $j\geq 2$ with $3\nmid j$. Therefore, 
applying the same argument as in the case of $\mathcal{G}_1^{\laa 1 \raa}(\rho)$, 
we obtain, for example,
\begin{align}
\mathcal{G}_3^{\laa 1 \raa}(\rho)&=
\sum_{\substack{(m,n)\in\mathbb{Z}^2\\m\neq 0}}
\frac{(-1)^n}{\sinh(m\pi i/\rho) (m+n\rho)^3}=i\left(\frac{7}{90}\pi^3-\frac{\sqrt{3}}{9}\pi^2\right), \label{e-17} \\
\mathcal{G}_5^{\laa 1 \raa}(\rho)&=
\sum_{\substack{(m,n)\in\mathbb{Z}^2\\m\neq 0}}
\frac{(-1)^n}{\sinh(m\pi i/\rho) (m+n\rho)^5} =\rho i\left(-\frac{\widetilde{\varpi}^6}{35\pi}+\frac{31}{2520}\pi^5-\frac{7\sqrt{3}}{540}\pi^4\right), \label{e-18} \\
\mathcal{G}_2^{\laa 2 \raa}(\rho)&=
\sum_{\substack{(m,n)\in\mathbb{Z}^2\\m\neq 0}}
\frac{1}{(\sinh(m\pi i/\rho))^2 (m+n\rho)^2} =\rho \left(\frac{11}{45}\pi^2-\frac{4\sqrt{3}}{9}\pi\right), \label{e-19} \\
\mathcal{G}_4^{\laa 2 \raa}(\rho)&=
\sum_{\substack{(m,n)\in\mathbb{Z}^2\\m\neq 0}}
\frac{1}{(\sinh(m\pi i/\rho))^2 (m+n\rho)^4} =\frac{1}{\rho}\left(-\frac{\widetilde{\varpi}^6}{35\pi^2}+\frac{37}{945}\pi^4-\frac{8\sqrt{3}}{135}\pi^3\right). \label{e-20} 
\end{align}
Note that \eqref{e-16} and \eqref{e-17}-\eqref{e-20} can also be given by the same method as 
in \cite{TsBul}. 
\end{example}

\begin{example}\label{Exam-Aust}
By combining \eqref{1-1} and \eqref{4-3} and using $(\cosh x)^2=(\sinh x)^2+1$, we obtain
\begin{equation}
\begin{split}
\sum_{\substack{(m,n)\in\mathbb{Z}^2\\m\neq 0}}\frac{(\coth(m\pi))^2}{(m+ni)^4}
&= \sum_{\substack{(m,n)\in\mathbb{Z}^2\\m\neq 0}}
    \frac{1+(\sinh(m\pi))^{-2}}{(m+ni)^4}\\
&=\Biggl(\sum_{\substack{(m,n)\in\mathbb{Z}^2\\(m,n)\neq (0,0)}}-
\sum_{\substack{(m,n)\in\mathbb{Z}^2\\m=0,n\neq 0}}\Biggr)\frac{1}{(m+ni)^4}
+\sum_{\substack{(m,n)\in\mathbb{Z}^2\\m\neq 0}}\frac{(\sinh(m\pi))^{-2}}
{(m+ni)^4}\\
&=\frac{2}{45}\varpi^4+\frac{16}{945}\pi^4-\frac{4}{45}\pi^3,
\end{split}
\label{aust-2}
\end{equation}
which was obtained in \cite{TsAust}. Similarly, by \eqref{e-20}, we obtain
\begin{equation}
\begin{split}
\sum_{\substack{(m,n)\in\mathbb{Z}^2\\m\neq 0}}\frac{(\coth(m\pi i/\rho))^2}
{(m+n\rho)^4}&= \sum_{\substack{(m,n)\in\mathbb{Z}^2\\m\neq 0}}
\frac{1+(\sinh(m\pi i/\rho))^{-2}}{(m+n\rho)^4}\\
&=\frac{1}{\rho}\left\{ -\frac{\widetilde{\varpi}^6}{35\pi^2}+\frac{16}{945}\pi^4-\frac{8\sqrt{3}}{135}\pi^3\right\},
\end{split}
\label{aust-3}
\end{equation}
because $G_4(\rho)=0$.
\end{example}

\bigskip

\section{Values of $q$-zeta functions at positive integers} \label{sec-5}

In \cite{KKW}, Kaneko, Kurokawa and Wakayama defined a $q$-analogue of the Riemann zeta function, so-called the $q$-zeta function, by 
\begin{equation}
\zeta_q(s)={(1-q)^s}\sum_{m=1}^\infty \frac{q^{m(s-1)}}{(1-q^m)^s}  \label{5-1}
\end{equation}
for $q\in \mathbb{R}$ with $q\leq 1$. They showed that $\lim_{q\to 1}\zeta_q(s)=\zeta(s)$ 
for all $s \in \mathbb{C}$ except for $s=1$. More generally, they studied the function
\begin{equation}
f_q(s,t)=(1-q)^s\sum_{m=1}^\infty \frac{q^{mt}}{(1-q^m)^s}. \label{5-2}
\end{equation}
Note that $f_q(s,s-1)=\zeta_q(s)$. 
In \cite{WY}, Wakayama and Yamasaki showed that $\lim_{q\to 1}f_q(s,t)=\zeta(s)$ for all $(s,t)\in \mathbb{C}^2$ except for $s=1$. Therefore $f_q(s,t)$ can be regarded as a true $q$-analogue of $\zeta(s)$. 

In this section, we aim to evaluate $f_q(2k,k)$ for $k\in \mathbb{N}$ when $q=e^{-2\pi}$. From the definition, we can see that if $q=e^{-2\pi}$ then 
\begin{equation}
f_q(2k,k)=\left(\frac{ 1-e^{-2\pi}}{2}\right)^{2k}\sum_{m=1}^\infty \frac{1}{\sinh(m\pi)^{2k}}\quad (k\in \mathbb{N}). \label{5-3}
\end{equation}
Therefore it is necessary to evaluate $\sum_{m\geq 1} \sinh(m\pi)^{-2k}$ for 
$k\in \mathbb{N}$. 

\begin{prop} \label{P-5-1}
For $k\in \mathbb{N}$,
\begin{equation}
\sum_{m\in \mathbb{Z}\setminus \{0\}} \frac{1}{\sinh(m\pi)^{2k}}=\frac{1}{\pi} \mathcal{G}_{1}^{\laa 2k-1 \raa}(i). \label{5-4}
\end{equation}
\end{prop}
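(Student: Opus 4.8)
The plan is to unfold the definition of $\mathcal{G}_{1}^{\laa 2k-1 \raa}(i)$ and carry out the inner summation over $n$ in closed form, thereby collapsing the double series to the single series in $m$. Recall (as in Example \ref{Exam-fin}) that $\mathcal{G}_{1}^{\laa 2k-1 \raa}(i)$ abbreviates $\mathcal{G}_{1}^{\laa 2k-1 \raa}(0,0,1/2;1,i)$, so we are in the situation $\tau=i$, $(x,y,z)=(0,0,1/2)$, $r=2k-1$; since $0<z<1$ and $y+rz=(2k-1)/2\notin\mathbb{Z}$, the first branch of the definition \eqref{Gene-Hur} with $k=1$ applies. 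Because $z-1/2=0$ the exponential factor is identically $1$, we have $\sinh(m\pi i/\tau)=\sinh(m\pi)$, and $(-1)^{rn}=(-1)^{n}$ as $r$ is odd, so
\[
\mathcal{G}_{1}^{\laa 2k-1 \raa}(i)=\Lim_{M,N}\sum_{\substack{-M\leq m\leq M\\-N\leq n\leq N\\ m\neq0}}\frac{(-1)^{n}}{(\sinh(m\pi))^{2k-1}(m+ni)}.
\]

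By Theorem \ref{Main-theorem} the value of this sum is the same for all three interpretations of $\Lim_{M,N}$ in \eqref{eq:limit}, so I may take it to be $\lim_{M\to\infty}\lim_{N\to\infty}$ and evaluate the inner $n$-sum for each fixed $m\neq0$. Grouping the $\pm n$ terms gives $\sum_{n=-N}^{N}(-1)^{n}/(m+ni)=1/m+2m\sum_{n=1}^{N}(-1)^{n}/(m^{2}+n^{2})$, and letting $N\to\infty$ and invoking the classical cosecant-type partial fraction expansion $\pi/\sinh(\pi z)=1/z+2z\sum_{n\geq1}(-1)^{n}/(z^{2}+n^{2})$ at $z=m$ yields exactly $\pi/\sinh(m\pi)$. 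Substituting this back we obtain
\[
\mathcal{G}_{1}^{\laa 2k-1 \raa}(i)=\lim_{M\to\infty}\sum_{\substack{-M\leq m\leq M\\ m\neq0}}\frac{\pi}{(\sinh(m\pi))^{2k}}=\pi\sum_{m\in\mathbb{Z}\setminus\{0\}}\frac{1}{(\sinh(m\pi))^{2k}},
\]
the last series being absolutely convergent since $(\sinh(m\pi))^{-2k}$ decays geometrically; dividing by $\pi$ gives \eqref{5-4}.

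The only delicate point — and hence the main (but mild) obstacle — is justifying the reduction of $\Lim_{M,N}$ to the iterated limit $\lim_{M}\lim_{N}$ together with the interchange of the $n$-limit and the outer $m$-sum. This is supplied precisely by Theorem \ref{Main-theorem}, which already asserts that the first (not absolutely convergent) sum in \eqref{Gene-Hur} is independent of the order of summation; granting that, the inner $n$-sum here is a genuinely convergent symmetric partial sum with the closed value computed above, and the resulting $m$-series converges absolutely, so no estimates beyond those established in Section \ref{sec-3-2} are needed.
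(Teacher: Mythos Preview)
Your proof is correct and considerably more direct than the paper's. Once Theorem \ref{Main-theorem} guarantees that $\Lim_{M,N}$ may be read as $\lim_{M\to\infty}\lim_{N\to\infty}$, the inner symmetric $n$-sum is exactly the Mittag-Leffler expansion of $\pi/\sinh(\pi m)$, and the remaining $m$-series is absolutely convergent; your justification of the interchange is sound because the outer $m$-sum is finite at the moment the $N$-limit is taken.

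The paper takes a genuinely different route, in the spirit of \cite{TsBul}. It first invokes Cauchy's identities $\mathfrak{S}(-1)=-1/(4\pi)$ and $\mathfrak{S}(-4j-1)=0$ to show that the auxiliary function
\[
\mathcal{J}(\theta)=\sum_{n\geq1}\frac{(-1)^n\{\sin(n\theta)+\sinh(n\theta)\}}{\sinh(n\pi)}+\frac{\theta}{2\pi}
\]
vanishes identically on $(-\pi,\pi)$; it then multiplies $\mathcal{J}(\theta)$ by $2i\sum_{m\neq0}(-1)^me^{im\theta}/\sinh(m\pi)^{2k-1}$, expands, strips off the constant term, and integrates over $\theta\in[-\pi,\pi]$ to extract the relation \eqref{5-4}. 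What this buys is a self-contained derivation that does not appeal to Theorem \ref{Main-theorem} and that illustrates the machinery of \cite{TsBul} (the same technique yields \eqref{rel-G1}). Your approach, by contrast, uses nothing beyond the partial-fraction identity for $\pi/\sinh(\pi z)$ together with the convergence statement already proved in Theorem \ref{Main-theorem}, and is the shorter path to this particular proposition.
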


\begin{proof}
Let 
$$\mathfrak{S}(j)=\sum_{n=1}^\infty \frac{(-1)^n}{\sinh(n\pi)n^{j}}\quad (j \in \mathbb{Z}).$$
Cauchy \cite{Ca} showed that
\begin{equation}
\mathfrak{S}(-1)=-\frac{1}{4\pi},\ \mathfrak{S}(-4j-1)=0\ \ (j\in \mathbb{N}). \label{5-5}
\end{equation}
For $\theta \in (-\pi,\pi)\subset \mathbb{R}$, let 
\begin{equation*}
\mathcal{J}(\theta)=\sum_{n=1}^\infty \frac{(-1)^n\left\{ \sin(n\theta)+\sinh(n\theta)\right\}}{\sinh(n\pi)}+\frac{\theta}{2\pi},
\end{equation*}
which is absolutely convergent for $\theta \in (-\pi,\pi)$. 
We can easily check that $\sin x+\sinh x=2\sum_{l\geq 0}x^{4l+1}/(4l+1)!$. Hence, by \eqref{5-5}, we have
\begin{equation}
\begin{split}
\mathcal{J}(\theta) =& 2\sum_{l=0}^\infty \mathfrak{S}(-4l-1)\frac{\theta^{4l+1}}{(4l+1)!}+\frac{\theta}{2\pi} \\
=& 2\mathfrak{S}(-1)\theta +\frac{\theta}{2\pi}=0\quad (\theta \in (-\pi,\pi)).
\end{split}
\label{5-6}
\end{equation}
For $k\in \mathbb{N}$ and $\theta \in (-\pi,\pi)\subset \mathbb{R}$, let 
\begin{equation}
\mathcal{F}(k;\theta)=2i \sum_{m\in \mathbb{Z}\setminus \{0\}}\frac{(-1)^me^{im\theta}}{\sinh(m\pi)^{2k-1}}\cdot \mathcal{J}(\theta). \label{5-7}
\end{equation}
From \eqref{5-6}, we have $\mathcal{F}(k;\theta)=0$ for $\theta \in (-\pi,\pi)$. 
On the other hand, combining \eqref{5-5} and \eqref{5-7}, we have
\begin{equation}
\begin{split}
\mathcal{F}(k;\theta)& =\sum_{m\not=0}\sum_{n\geq 1} 
\frac{(-1)^{m+n}\left\{ e^{i(m+n)\theta}-e^{i(m-n)\theta}+i(e^{(im+n)\theta}-
e^{(im-n)\theta})\right\}}{\sinh(m\pi)^{2k-1}\sinh(n\pi)} \\
& \qquad +\frac{i\theta}{\pi}\sum_{m\not=0}\frac{(-1)^m e^{im\theta}}{\sinh(m\pi)^{2k-1}}\\
& =\sum_{m\not=0}\sum_{n\not=0} \frac{(-1)^{m+n}\left\{ e^{i(m+n)\theta}+
ie^{(im+n)\theta}\right\}}{\sinh(m\pi)^{2k-1}\sinh(n\pi)} +\frac{i\theta}{\pi}
\sum_{m\not=0}\frac{(-1)^m e^{im\theta}}{\sinh(m\pi)^{2k-1}}.
\end{split}
\label{5-8}
\end{equation}
In order to integrate this function, we remove the constant term of the first sum, namely the term of $m+n=0$, on the right-hand side of \eqref{5-8} and define
\begin{equation}
\begin{split}
\widetilde{\mathcal{F}}(k;\theta) & =\sum_{m\not=0}\sum_{n\not=0 \atop m+n\not=0} \frac{(-1)^{m+n}e^{i(m+n)\theta}}{\sinh(m\pi)^{2k-1}\sinh(n\pi)} \\
& +i\sum_{m\not=0}\sum_{n\not=0} \frac{(-1)^{m+n} e^{(im+n)\theta}}{\sinh(m\pi)^{2k-1}\sinh(n\pi)} \\
& +\frac{i\theta}{\pi}\sum_{m\not=0}\frac{(-1)^m e^{im\theta}}{\sinh(m\pi)^{2k-1}}.
\end{split}
\label{5-9}
\end{equation}
Since $\mathcal{F}(k;\theta)=0$ for $\theta \in (-\pi,\pi)$, we have
\begin{equation}
\widetilde{\mathcal{F}}(k;\theta)=\sum_{m\not=0}\frac{1}{\sinh(m\pi)^{2k}} \label{5-10}
\end{equation}
for $\theta \in (-\pi,\pi)$. By calculating $(\widetilde{\mathcal{F}}(\theta)+\widetilde{\mathcal{F}}(-\theta))/2$, we obtain from \eqref{5-9} and \eqref{5-10} that 
\begin{equation}
\begin{split}
& \sum_{m\not=0}\sum_{n\not=0 \atop m+n\not=0} \frac{(-1)^{m+n}\cos((m+n)\theta)}
{\sinh(m\pi)^{2k-1}\sinh(n\pi)} + i\sum_{m\not=0}\sum_{n\not=0} \frac{(-1)^{m+n}
\cosh((im+n)\theta)}{\sinh(m\pi)^{2k-1}\sinh(n\pi)} \\
& \qquad -\frac{\theta}{\pi}\sum_{m\not=0}\frac{(-1)^m \sin(m\theta)}{\sinh(m\pi)^{2k-1}}=\sum_{m\not=0}\frac{1}{\sinh(m\pi)^{2k}}. 
\end{split}
\label{5-11}
\end{equation}
Now we integrate the both sides of \eqref{5-11} with respect to $\theta$ from $-\pi$ to $\pi$. Then, noting
\begin{align*}
& \int_{-\pi}^{\pi}\cos((m+n)\theta)d\theta=0,\ \int_{-\pi}^{\pi}\cosh ((im+n)\theta)d\theta=\frac{2(-1)^m \sinh(n\pi)}{im+n},\\
& \int_{-\pi}^{\pi} \theta\sin (m\theta)d\theta=-\frac{2\pi (-1)^m}{m},
\end{align*}
we obtain 
\begin{equation*}
\begin{split}
& 2i\Lim_{M,N}\sum_{\substack{-M\leq m \leq M \\ -N\leq n \leq N\\m\neq 0}} 
\frac{(-1)^{n}}{\sinh(m\pi)^{2k-1}(im+n)} +2\sum_{m\not=0}\frac{1}{\sinh(m\pi)^{2k-1}m}=2\pi \sum_{m\not=0}\frac{1}{\sinh(m\pi)^{2k}}, 
\end{split}
\end{equation*}
Note that the second sum on the left-hand side corresponds to the case $n=0$ of the first sum on the left-hand side. Therefore, replacing $(m,n)$ by $(-m,-n)$ on the left-hand side, we have
\begin{equation*}
\begin{split}
& 2\mathcal{G}_{1}^{\laa 2k-1 \raa}(i)=2\Lim_{M,N}\sum_{\substack{-M\leq m 
\leq M \\ -N\leq n \leq N\\m\neq 0}} \frac{(-1)^{n}}{\sinh(m\pi)^{2k-1}(m+ni)}  
=2\pi \sum_{m\not=0}\frac{1}{\sinh(m\pi)^{2k}}, 
\end{split}
\end{equation*}
which gives \eqref{5-4}. This completes the proof.
\end{proof} 

\begin{example} \label{Exam-5-2}
As we mentioned in the previous section (see Example \ref{Exam-fin}), we can 
evaluate $\mathcal{G}_{1}^{\laa 2k-1 \raa}(i)$ (see \eqref{4-4}-\eqref{4-4-3}). 
Hence, setting $q=e^{-2\pi}$, and combining \eqref{5-3} and \eqref{5-4}, we can obtain the following evaluation formulas for $q$-zeta functions:
\begin{align}
& f_q(2,1)(=\zeta_q(2))=(1-q)^2\sum_{m=1}^\infty \frac{q^{m}}{(1-q^m)^2}=\frac{ \left(1-e^{-2\pi}\right)^{2}}{8}\left(\frac{1}{3}-\frac{1}{\pi}\right),\label{5-12}\\
& f_q(4,2)=(1-q)^4\sum_{m=1}^\infty \frac{q^{2m}}{(1-q^m)^4}=\frac{ \left(1-e^{-2\pi}\right)^{4}}{32}\left(\frac{\varpi^4}{15\pi^4}-\frac{11}{45}+\frac{2}{3\pi}\right),\label{5-13}\\
& f_q(6,3)=(1-q)^6\sum_{m=1}^\infty \frac{q^{3m}}{(1-q^m)^6}=-\frac{ \left(1-e^{-2\pi}\right)^{6}}{128}\left(\frac{\varpi^4}{15\pi^4}-\frac{191}{945}+\frac{8}{15\pi}\right). \label{5-14}
\end{align}
\end{example}

\ 

We will give more general results about $q$-zeta functions in our forthcoming paper. Indeed we will be able to give general forms of evaluation formulas for $q$-zeta functions in terms of theta-functions.

\bigskip 

\bibliographystyle{amsplain}

\end{document}